\numberwithin{equation}{section}
\newcommand{\set}[1]{\left\lbrace #1 \right\rbrace}
\renewcommand{\tilde}[1]{\widetilde{#1}}
\newcommand{\Hom}{\operatorname{Hom}}
\newcommand{\trop}{\operatorname{trop}}
\newcommand{\spec}{\operatorname{Spec}}
\newcommand{\Bl}{\operatorname{Bl}}
\newcommand{\Gl}{\operatorname{Gl}}
\newcommand{\Sl}{\operatorname{Sl}}
\newcommand{\thespan}{\operatorname{span}}
\renewcommand{\hat}[1]{\widehat{#1}}
\newtheorem{theorem}{Theorem}[section]
\newtheorem{proposition}[theorem]{Proposition}
\theoremstyle{definition}
\newtheorem{remark}[theorem]{Remark}
\newtheorem{example}[theorem]{Example}
\newtheorem{definition}[theorem]{Definition}
\title{Global Spherical Tropicalization via Toric Embeddings}
\author{Evan D. Nash}
\address{Department of Mathematics, The Ohio State University, 231 W 18th Ave, Columbus OH, 43210}
\email{nash.228@osu.edu}
\begin{document}
\begin{abstract}
The first steps in defining tropicalization for spherical varieties have been taken in the last few years. 
There are two parts to this theory: tropicalizing subvarieties of homogeneous spaces and tropicalizing their closures in spherical embeddings.
In this paper, we obtain a new description of spherical tropicalization that is equivalent to the other theories. This works by embedding in a toric variety, tropicalizing there, and then applying a particular piecewise projection map.
We use this theory to prove that taking closures commutes with the spherical tropicalization operation.
\end{abstract}
\maketitle
\small

\section*{Introduction}

In his thesis \cite{Vo}, Tassos Vogiannou introduces a notion of spherical tropicalization for a spherical homogeneous space $G/H$. Broadly speaking, this operation records the $G$-invariant divisors on the field of rational functions while the standard toric tropicalization instead records torus-invariant divisors. Kaveh and Manon in \cite{KM} recover this construction using a Gr\"obner theory for spherical varieties that they develop.
In \cite{Na}, we propose a means for tropicalizing embeddings of spherical homogeneous spaces.
There we define two constructions---one that mimics the theory for toric varieties developed by Kajiwara \cite{Ka} and Payne \cite{Pay} and one that extends the Gr\"obner theory definition of Kaveh and Manon---and show that they coincide.

In this paper, we give a third method for obtaining the tropicalization of a spherical embedding.
This method is global, by which we mean the tropicalization operation need only be applied once to a single object.
It is possible to globally tropicalize a toric variety by tropicalizing the quotient construction of a toric variety via the Cox ring.
See \cite{Pay}, Remark 3.5 or \cite{MS}, \textsection 6.2 for a description of this.
For details on the quotient construction, refer to \cite{Co1} and \cite{Co2} and for a comprehensive overview of Cox rings in general, see \cite{ADHL}.
Our aim is to mimic this construction for spherical varieties.
This is noteworthy because in the other two constructions of spherical tropicalization from \cite{Na}, the spherical variety is divided into simple $G/H$-embeddings, these are tropicalized separately, and then the tropicalizations are glued together.

The construction follows Gagliardi's work in \cite{Ga}, where he describes the Cox ring of a spherical embedding using its combinatorial data (see also \cite{Br07}).
We begin by embedding the spherical variety in a toric variety, which is an example of the embedding of a Mori dream space into a toric variety described by Hu and Keel \cite{HK}.
Once embedded in the toric variety, we can tropicalize there using the standard toric theory. 
Then applying a particular piecewise projection map will deliver the spherical tropicalization.

This process does not work for all spherical varieties. 
When the spherical variety does not have the $A_2$-property (Definition \ref{A2}), embedding into a toric variety is not possible.
The tropicalization of a spherical variety without the $A_2$-property can still be described using this theory, but it cannot be done globally.

The layout of this paper is as follows.
In \textsection \ref{background}, we review some of the theory of spherical varieties and their tropicalization.
In \textsection \ref{tropmorphisms}, we describe how to tropicalize a morphism between two spherical embeddings.
We explain a novel means of tropicalizing subvarieties of a spherical homogeneous spaces using toric embeddings in \textsection \ref{homspaces}.
Theorems \ref{V=G} and \ref{V=G2} prove that this new method coincides with the original theory of \cite{Vo}.
In \textsection \ref{toroidal} we show how to embed toroidal spherical varieties in toric varieties and in \textsection \ref{badcolors} we extend this to spherical embeddings with color.
Finally, \textsection \ref{extendedtrop} shows how to recover the extended tropicalization of an embedding by using the toric tropicalization. We also show in this section that taking the closure commutes with the tropicalization operation.

\subsection*{Acknowledgments} 
The author thanks Gary Kennedy for suggesting this direction of research and numerous discussions. Thanks also to Giuliano Gagliardi for providing input on several points.

\section{Background}\label{background}

We review some of the theory of spherical varieties.
For additional background, refer to \cite{LV}, \cite{Kn}, \cite{Pas}, or \cite{Pe}.
If $G$ is a connected reductive group, then the quotient $G/H$ is a \emph{(spherical) homogeneous space} if it is a normal variety containing a dense orbit of a Borel subgroup $B$.
A \emph{spherical embedding} $X$ is a normal $G$-variety with an open equivariant embedding $G/H \hookrightarrow X$.
We will also call $X$ a \emph{spherical variety} or a \emph{$G/H$-embedding} if we wish to highlight the underlying homogeneous space.
A spherical variety $X$ is modeled by a combinatorial object called a colored fan.
We recall the outline of the theory here; refer to the cited papers for further details. 

Denote by $\mathcal{X}$ the group of characters $B \rightarrow \mathbb{C}^*$ on $B$.
The set $\mathbb{C}(G/H)^{(B)}$ of $B$ semi-invariant rational functions on $G/H$ is defined by 
\[
\mathbb{C}(G/H)^{(B)} := \set{f \in \mathbb{C}(G/H)^\ast : \text{there exists } \chi_f \in \mathcal{X} \text{ such that } gf = \chi_f(g)f \text{ for all } g \in B },
\]
where the action of $g$ on $f$ is given by $gf(x) := f\left(g^{-1}x \right)$ for $x \in G/H$.
The subset $\mathcal{M} \subseteq \mathcal{X}$ of characters $\chi_f$ associated to $f \in \mathbb{C}(G/H)^{(B)}/\mathbb{C}^*$ is a lattice.
We write $\mathcal{N} := \Hom(\mathcal{M},\mathbb{Z})$ to denote its dual. 
Further, $\mathcal{M}_\mathbb{Q} := \mathcal{M} \otimes_\mathbb{Z} \mathbb{Q}$ and $\mathcal{N}_\mathbb{Q} := \mathcal{N} \otimes_\mathbb{Z} \mathbb{Q}$ are the associated $\mathbb{Q}$-vector spaces.
We write $\mathcal{V}(G/H)$ to denote the set of $G$-invariant valuations on $\mathbb{C}(G/H)^*$ that are trivial when restricted to $\mathbb{C}^*$.
A given $v \in \mathcal{V}(G/H)$ induces a homomorphism $\mathbb{C}(G/H)^{(B)}/\mathbb{C}^* \rightarrow \mathbb{Q}$, namely $f \mapsto v(f)$.
By identifying $f$ with its associated character $\chi_f$, this homomorphism is an element of $\mathcal{N}_\mathbb{Q}$.
Thus, we obtain a map $\mathcal{V}(G/H) \rightarrow \mathcal{N}_\mathbb{Q}$, which is an inclusion (\cite{LV}, Proposition 7.4). 
We will identify $\mathcal{V}(G/H)$ with its image in $\mathcal{N}_\mathbb{Q}$, which is a cone.
We call $\mathcal{V}(G/H)$ the \emph{valuation cone} of $G/H$ and write it as $\mathcal{V}$ when $G/H$ is understood.

We also obtain a \emph{palette} $\mathcal{D}$ of colors associated to $G/H$.
These are the prime divisors of $G/H$ that are $B$-stable but not $G$-stable.
Each $D \in \mathcal{D}$ induces a $G$-invariant valuation $v_D$ given by vanishing along $D$.
Thus, we obtain a map $\rho: \mathcal{D} \rightarrow \mathcal{N}$ given by $D \mapsto v_D$; it need not be injective.
We now have enough background to define colored cones and colored fans:
\begin{definition}
A \emph{colored cone} $(\sigma, \mathcal{F})$ consists of a cone $\sigma \subseteq \mathcal{N}_\mathbb{Q}$ and a subset $\mathcal{F} \subseteq \mathcal{D}$ such that:
\begin{itemize}
\item[(i)] $\sigma$ is generated by $\rho(\mathcal{F})$ and finitely many elements of $\mathcal{V}$;
\item[(ii)] $\operatorname{int}(\sigma) \cap \mathcal{V} \neq \emptyset$.
\end{itemize}
We call $(\sigma,\mathcal{F})$ \emph{strictly convex} if in addition $\sigma$ is strictly convex and $0 \notin \rho(\mathcal{F})$.
\end{definition}

\begin{definition}
A colored cone $(\tau,\mathcal{F}')$ is a \emph{face} of a colored cone $(\sigma, \mathcal{F})$ if $\tau$ is a face of $\sigma$ and $\mathcal{F}' = \mathcal{F} \cap \rho^{-1}(\tau)$. We write $\tau \preceq \sigma$ in this case.
\end{definition}

\begin{definition}
A \emph{colored fan} $\Sigma$ is a collection of colored cones such that:
\begin{itemize}
\item[(i)] If $(\sigma, \mathcal{F}) \in \Sigma$, the every face of $(\sigma, \mathcal{F})$ is in $\Sigma$;
\item[(ii)] For every $v \in \mathcal{V}$, there is at most one $(\sigma, \mathcal{F}) \in \Sigma$ with $v \in \operatorname{int}(\sigma)$.
\end{itemize}
We say $\Sigma$ is \emph{strictly convex} if in addition every colored cone in $\Sigma$ is strictly convex. 
\end{definition}

Note that the relative interiors of two colored cones in a colored fan may intersect nontrivially outside $\mathcal{V}$; we present an instance of this in Example \ref{colors_outside_val}.
There is a bijective correspondence between $G/H$-embeddings and strictly convex colored fans.
The colored cones in a strictly convex colored fan represent the $G$-orbits of an embedding $X$.
Strictly convex colored cones correspond to \emph{simple} $G/H$-embeddings, which have one closed $G$-orbit.
If the colored fan associated to a spherical variety has no colors, we say that the embedding is \emph{toroidal}.

Throughout this paper, we denote by $\nu: \mathbb{C}\{\!\{ t\}\!\} \rightarrow \mathbb{Q}$ the valuation that takes a Puiseux series to the lowest power of $t$ appearing with nonzero coefficient.
Given a closed subvariety $Y \subseteq G/H$, we define its spherical tropicalization as follows. For each $\mathbb{C}\{\!\{t\}\!\}$-point $\gamma \in Y\left( \mathbb{C}\{\!\{t\}\!\} \right)$, we obtain the following associated valuation $\nu_\gamma: \mathbb{C}(G/H)^* \rightarrow \mathbb{Q}$ in $\mathcal{V}$:
\[
f \mapsto \nu\left(\gamma^*(gf)\right) \text{ for } f \in \mathbb{C}(G/H)^* \text{ and } g \in G \text{ sufficiently general.}
\]
We must explain what we mean by sufficiently general $g$.
For $g$ in an open subset of $G$, the image of $\gamma$ is in the domain of $gf$.
Furthermore, the minimum of $\nu\left(\gamma^*(gf)\right)$ over all $g \in G$ is met on an open subset.
A sufficiently general $g$ lies in both of these open subsets.
The tropicalization $\trop_G(Y) \subseteq \mathcal{V}$ is the collection of $\nu_\gamma$ for all $\gamma \in Y\left( \mathbb{C}\{\!\{t\}\!\} \right)$. 
Note that the subscript $G$ records which group action we are tropicalizing with respect to and observe that $\trop_G(G/H) = \mathcal{V}(G/H)$.

We briefly describe the extended tropicalization construction for spherical embeddings from \cite{Na}.
If $X$ is a $G/H$-embedding and $\mathcal{O} \subseteq X$ is a $G$-orbit, then $\mathcal{O}$ is itself a homogeneous space with respect to the action of $G$.
Thus, $\mathcal{O}$ has a valuation cone $\mathcal{V}(\mathcal{O})$.
As a set, the spherical tropicalization is 
\[
\trop_G(X) := \bigsqcup_{\mathcal{O} \subseteq X} \mathcal{V}(\mathcal{O}).
\]
Note that the subscript $G$ is used to record which group action we are considering.
The subset $\mathcal{V}(\mathcal{O}) \in \trop_G(X)$ can be thought of as extended valuations on $\mathbb{C}[G/H]$, i.e. semigroup homomorphisms $\mathbb{C}[G/H] \rightarrow \mathbb{Q} \cup \set{\infty}$.
Which elements of $\mathbb{C}[G/H]$ that are allowed to be sent to $\infty$ is determined by which functions in $\mathbb{C}[G/H]$ vanish along $\mathcal{O}$.
The topology on $\trop_G(X)$ in essence adds on these extended valuations by considering them as limit points of finitely-valued $\mathbb{Q}$-valuations.
If $Y \subseteq X$ is a closed subvariety, we obtain its tropicalization $\trop_G(Y)$ by separately tropicalizing its intersection $Y \cap \mathcal{O}$ with each $G$-orbit $\mathcal{O}$ of $X$.
We define
\[
\trop_G(Y) := \bigsqcup_{\mathcal{O} \subseteq X} \trop_G(Y \cap \mathcal{O}) \subseteq \trop_G(X)
\] 
and apply the subspace topology inherited from $\trop_G(X)$.

We can identify the tropicalization of a simple $G/H$-embedding consisting of one maximal colored cone $(\sigma,\mathcal{F})$ with a set of semigroup homomorphisms $\Hom^{\mathcal{V}}(\sigma^\vee \cap \mathcal{M}, \overline{\mathbb{Q}}) \subseteq \Hom(\sigma^\vee \cap \mathcal{M}, \overline{\mathbb{Q}})$.
This set consists of all semigroup homomorphisms $\mu \in \Hom(\sigma^\vee \cap \mathcal{M}, \overline{\mathbb{Q}})$ such that $\mu^{-1}(\mathbb{Q}) = \tau^\perp \cap \sigma^\vee \cap \mathcal{M}$ where $(\tau,\mathcal{F}')$ is a face of $(\sigma,\mathcal{F})$. 

We establish some notational conventions.
Throughout, $X$ denotes a spherical embedding, $Y \subseteq G/H$ denotes a closed subvariety of a homogeneous space, and $Z$ denotes a toric variety.
The combinatorial data associated to spherical varieties is written in non-calligraphic font when referring to toric varieties.
That is, a toric variety has the lattices $M$ and $N$ rather than $\mathcal{M}$ and $\mathcal{N}$.
Finally, we write $\overline{\mathbb{Q}} := \mathbb{Q} \cup \set{\infty}$.

\section{Tropicalizing Morphisms}\label{tropmorphisms}

In this section, we describe how a morphism between spherical varieties induces a morphism between their tropicalizations. Our main result is that these morphisms commute with tropicalization (Proposition \ref{commute}), which extends Corollary 6.2.17 of \cite{MS}.
This fact will be used later; we prove it here because it is independent of the other theory in this paper.

Let $X$ and $X'$ respectively be a $G/H$-embedding and a $G/H'$-embedding.
If $\Phi: X \rightarrow X'$ is a $G$-morphism induced by a dominant $G$-equivariant morphism $G/H \rightarrow G/H'$, then there is an induced continuous map $\text{trop}(\Phi): \text{trop}(X) \rightarrow \text{trop}(X')$.

Before describing the map, we collect some facts about morphisms between spherical varieties as stated in \textsection 4 of \cite{Kn}.
First, a $G$-equivariant dominant morphism $\Phi: G/H \rightarrow G/H'$ between spherical homogeneous spaces induces an injection $\Phi^\ast: \mathcal{M}(G/H') \hookrightarrow \mathcal{M}(G/H)$.
This in turn induces a surjection $\Phi_*: \mathcal{N}(G/H) \rightarrow \mathcal{N}(G/H')$, which restricts to $\Phi_*: \mathcal{V}(G/H) \rightarrow \mathcal{V}(G/H')$.
Note that $\Phi^*$ is defined by $f \mapsto f \circ \Phi$ and $\Phi_*$ is defined by $\mu \mapsto \mu \circ \Phi^*$.

Let $\mathcal{O}_i \subseteq X$ be a $G$-orbit with valuation cone $\mathcal{V}_i$.
Because $\Phi$ is $G$-equivariant, it takes orbits of $X$ to orbits of $X'$, so $\Phi(\mathcal{O}_i) \subseteq \mathcal{O}_i'$ for some orbit $\mathcal{O}_i'$ of $X'$.
Let $\mathcal{V}_i'$ denote the valuation cone of $\mathcal{O}_i'$.
By restriction, we have a map $\mathcal{O}_i \rightarrow \mathcal{O}_i'$ and hence an induced map $\mathcal{V}(\mathcal{O}_i) \rightarrow \mathcal{V}(\mathcal{O}_i')$.
Since this holds for each $G$-orbit of $X$, we can define a map $\trop(\Phi): \bigsqcup_i \mathcal{V}_i \rightarrow \bigsqcup_j \mathcal{V}_j'$ by taking the disjoint union of the pushforwards.

Suppose that we have a morphism $\Phi: G/H \rightarrow G'/H'$ of homogeneous spaces that is equivariant with respect to some surjective homomorphism of algebraic groups $\varphi: G \rightarrow G'$.
We would like to define the tropicalization of $\Phi$ in this setting where $G$ is not necessarily equal to $G'$.
The homomorphism $\varphi$ gives an action of $G$ on $G'/H'$; we show that this action makes $G'/H'$ a spherical homogeneous space with respect to $G$.

%\aside{NEW STUFF}
%The image $\varphi(B)$ of the Borel subgroup of $G$ is a closed algebraic subgroup of $G'$, and it must also be connected and solvable.
%It follows that $B' := \varphi(B)$ is a Borel subgroup of $G'$ since any subgroup of $G'$ strictly containing $\varphi(B)$ would pull back to a connected, solvable, closed subgroup of $G$, defying the maximality of $B$.
%
%Then let $BgH$ denote the open $B$-orbit in $G/H$ so that $B'\varphi(g)H'$

By choosing an appropriate basepoint, we may assume that $\Phi(H) = H'$.
Equivariance then implies that for any $g \in G$, $\Phi(gH) = \varphi(g)H'$.
Note that we also have $\phi(H) \leq H'$.
Indeed, if $h \in H$, we have $H' = \Phi(H) = \Phi(hH) = \varphi(h)H'$,
so $\varphi(h) \in H'$.
Then $\Phi$ can be factored as the natural projection $G/H \rightarrow G/\varphi^{-1}(H')$ followed by $\overline{\varphi}: G/\varphi^{-1}(H') \rightarrow G'/H'$ given by $g\varphi^{-1}(H') \mapsto \varphi(g)H'$.
By Zariski's Main Theorem (see for example \cite{Mu}, \textsection III.9), $G/\varphi^{-1}(H') \cong G'/H'$ as varieties, so $\Phi: G/H \rightarrow G'/H'$ may be realized as the projection $G/H \rightarrow G/\varphi^{-1}(H')$, and we may define $\trop(\Phi): \trop_G \left(G/H \right) \rightarrow \trop_G\left( G/\varphi^{-1}(H') \right)$ as we did when $G' = G$ and $\varphi$ was the identity.

\begin{proposition}\label{trop map continuous}
Let $X$ and $X'$ be a $G/H$-embedding and a $G'/H'$-embedding, respectively.
Suppose $\Phi: X \rightarrow X'$ is a dominant morphism equivariant with respect to a surjective homomorphism $\varphi: G \rightarrow G'$.
Then $\trop(\Phi)$ is continuous.
\end{proposition}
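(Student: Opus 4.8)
The plan is to reduce to a $G$-equivariant morphism between simple embeddings and then verify continuity directly against the product topology on the relevant space of semigroup homomorphisms; the genuine content lies in the reductions, not in the final check.

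\emph{Step 1: reduce to $G = G'$.} By the discussion preceding the proposition we may choose basepoints so that $\Phi(H) = H'$ and $\varphi(H) \le H'$, and then $\Phi$ on open orbits factors as the projection $G/H \rightarrow G/\varphi^{-1}(H')$ followed by the isomorphism of varieties $G/\varphi^{-1}(H') \cong G'/H'$. Regard $X'$ as a $G$-variety through $\varphi$. Since $\varphi$ is surjective this changes neither the underlying space of $X'$, nor its decomposition into $G$-orbits, nor (after the identification $G'/H' \cong G/\varphi^{-1}(H')$ on the open orbit) its colored fan, and a valuation on a function field is $G$-invariant precisely when it is $G'$-invariant; hence $\trop_G(X') = \trop_{G'}(X')$ as topological spaces and the map $\trop(\Phi)$ is unchanged. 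So we may assume $G = G'$, $\varphi = \id$, and that $\Phi\colon X \to X'$ is a dominant $G$-morphism of spherical embeddings, with $X'$ a $G/\varphi^{-1}(H')$-embedding whose lattices we denote $\mathcal{M}'$, $\mathcal{N}'$.

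\emph{Step 2: reduce to simple embeddings.} Continuity is local on source and target. Cover $X'$ by its $G$-stable open simple sub-embeddings $X'_j$; each preimage $\Phi^{-1}(X'_j)$ is $G$-stable and open, hence a union of $G$-stable open simple sub-embeddings $X_i$, each mapping into some $X'_{j(i)}$. Recall that the tropicalization of a $G$-stable open sub-embedding is canonically an open subspace of the ambient tropicalization --- this is how $\trop_G(X)$ is glued together from the $\trop_G(X_i)$ in \cite{Na}. Thus the sets $\trop_G(X'_j)$ and $\trop_G(X_i)$ form open covers of $\trop_G(X')$ and $\trop_G(X)$ compatible with $\trop(\Phi)$, and it suffices to prove each restriction $\trop(\Phi)\colon \trop_G(X_i) \to \trop_G(X'_{j(i)})$ is continuous. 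So we may assume $X$ and $X'$ are simple, with maximal colored cones $(\sigma,\mathcal{F})$ and $(\sigma',\mathcal{F}')$.

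\emph{Step 3: the simple case.} Now $\trop_G(X) = \Hom^{\mathcal{V}}(\sigma^\vee \cap \mathcal{M}, \overline{\mathbb{Q}})$ carries the subspace topology from $\Hom(\sigma^\vee \cap \mathcal{M}, \overline{\mathbb{Q}}) \subseteq \overline{\mathbb{Q}}^{\,\sigma^\vee \cap \mathcal{M}}$, the latter given the product topology --- which is precisely the topology obtained by adjoining the extended valuations as limit points --- and similarly for $X'$. Because $\Phi$ is a morphism of embeddings, the associated map of colored fans satisfies $\Phi_*(\sigma) \subseteq \sigma'$, equivalently $\Phi^*\big((\sigma')^\vee\big) \subseteq \sigma^\vee$; as $\Phi^*$ is a lattice homomorphism this gives $\Phi^*\big((\sigma')^\vee \cap \mathcal{M}'\big) \subseteq \sigma^\vee \cap \mathcal{M}$, and $\trop(\Phi)$ is the precomposition map $\mu \mapsto \mu \circ \Phi^*$ (the semigroup-homomorphism incarnation of the orbit-wise pushforwards $\mathcal{V}(\mathcal{O}_i) \to \mathcal{V}(\mathcal{O}_i')$ used to define $\trop(\Phi)$). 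Precomposition by a fixed map is continuous for the product topology: for each $m' \in (\sigma')^\vee \cap \mathcal{M}'$ the $m'$-coordinate of $\mu \circ \Phi^*$ equals $\mu\big(\Phi^*(m')\big)$, i.e. the value at $\mu$ of the coordinate projection indexed by $\Phi^*(m') \in \sigma^\vee \cap \mathcal{M}$, which is continuous in $\mu$. Hence $\trop(\Phi)$ is continuous on the ambient $\Hom$-spaces, and it remains continuous after restricting to the subspaces $\Hom^{\mathcal{V}}$.

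\emph{Expected obstacle.} The delicate points are entirely in Steps 1 and 2: checking that regarding $X'$ as a $G$-variety really leaves the tropicalization and its topology intact, that $\trop_G$ of an open sub-embedding is genuinely an open subspace, and --- most of all --- confirming that on the simple pieces the orbit-wise definition of $\trop(\Phi)$ coincides with precomposition by $\Phi^*$ and is compatible with the gluing maps. Granted these identifications, continuity itself is the formal product-topology argument above.
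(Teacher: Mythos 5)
Your proof is correct and, after stripping away the extra scaffolding, rests on exactly the same key observation as the paper: $\trop(\Phi)$ is precomposition $\mu \mapsto \mu \circ \Phi^*$, and since the topology on these $\Hom$-spaces is the one of pointwise (coordinate-wise) convergence, precomposition by a fixed map is automatically continuous. Your Step 1 reduction to $G = G'$ is the same as the paper's. The difference is one of packaging: the paper does not pass to simple sub-embeddings --- it works directly with an arbitrary $G$-orbit $\mathcal{O}$ in $X$, picks a sequence $\mu_\ell \to \mu$ approximating a point of $\trop_G(\mathcal{O})$ from $\trop_G(G/H)$, and checks convergence of $\trop(\Phi)(\mu_\ell)(f)$ coordinate-by-coordinate; you instead first cover $X$ and $X'$ by open simple sub-embeddings so as to have the clean description $\trop_G(X) = \Hom^{\mathcal{V}}(\sigma^\vee \cap \mathcal{M}, \overline{\mathbb{Q}})$ available, and then argue that each coordinate projection of $\mu \circ \Phi^*$ is a coordinate projection of $\mu$. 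Your version is arguably tighter: the paper's sequential check is informal about why testing only sequences emanating from $\trop_G(G/H)$ suffices, whereas your product-topology argument is a direct verification, at the (modest) cost of having to justify that tropicalizations of open sub-embeddings are open subspaces and that $\Phi_*\sigma \subseteq \sigma'$, both of which are standard facts from the Luna--Vust theory of spherical morphisms that the paper also takes for granted.
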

\begin{proof}
By the argument preceding the proposition, we may assume that $G = G'$ and $\varphi$ is the identity map.

Let $\mu \in \trop_G(\mathcal{O})$ where $\mathcal{O}$ is a $G$-orbit in $X$ and let the $G$-orbit $\Phi(\mathcal{O})$ correspond to a colored cone $(\sigma', \mathcal{F}')$.
Then there is a sequence $\set{\mu_\ell}_{\ell = 1}^\infty \subset \trop_G(G/H)$ of valuations such that $\lim_{\ell \rightarrow \infty} \mu_\ell = \mu$ in the topology on $\trop_G(X)$.
Let $f \in (\sigma')^\vee \cap \mathcal{M}'$ be an arbitrary $B$ semi-invariant rational function on the orbit $\Phi(\mathcal{O})$.
Then:
\[
\lim_{\ell \rightarrow \infty} (\trop(\Phi)(\mu_\ell))(f) = \lim_{\ell \rightarrow \infty} (\mu_\ell \circ \Phi^*)(f) = \lim_{\ell \rightarrow \infty} \mu_\ell \circ (f \circ \Phi) = \mu \circ (f \circ \Phi) = (\trop(\Phi)(\mu))(f)
\]
It follows that $\trop(\Phi)(\mu_\ell) \rightarrow \trop(\Phi)(\mu)$ as $\ell \rightarrow \infty$, and the claim follows.
\end{proof}

\begin{proposition} \label{commute} Let $X$ and $X'$ be spherical embeddings with respect to groups $G$ and $G'$, respectively. Let $\Phi: X \rightarrow X'$ be a dominant morphism equivariant with respect to a surjective homomorphism $\varphi: G \rightarrow G'$ such that $G/\varphi^{-1}(H) \cong G'/H'$ as varieties. Then if $Y \subseteq X$ is a subvariety, $\trop_{G'}(\Phi(Y)) = \trop(\Phi) (\trop_G(Y))$.
\end{proposition}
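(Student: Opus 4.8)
The plan is to reduce, via the equivariant structure, to a statement about a dominant morphism of spherical homogeneous spaces, and then to establish that statement by lifting $\mathbb{C}\{\!\{t\}\!\}$-points. First, by the reduction described just before Proposition~\ref{trop map continuous} (factor $\Phi$ through $G/H\to G/\varphi^{-1}(H')$ and regard $X'$ as a $G$-variety via $\varphi$), we may assume $G=G'$ and that $\varphi$ is the identity. Now $\trop_G(Y)=\bigsqcup_{\mathcal{O}}\trop_G(Y\cap\mathcal{O})$ is a disjoint union over the $G$-orbits $\mathcal{O}$ of $X$, the map $\trop(\Phi)$ is by construction the disjoint union of the pushforwards $\mathcal{V}(\mathcal{O})\to\mathcal{V}(\Phi(\mathcal{O}))$, and $\Phi$ carries the orbit stratification of $X$ onto that of $X'$, since the image of a single $G$-orbit under a $G$-morphism is again a single $G$-orbit. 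Using in addition the identity $\Phi(Y)\cap\mathcal{O}'=\bigcup_{\Phi(\mathcal{O})=\mathcal{O}'}\Phi(Y\cap\mathcal{O})$ and the fact that tropicalization turns finite unions into unions, this reduces the proposition to the claim: \emph{for a surjective $G$-morphism $\Phi\colon\mathcal{O}\to\mathcal{O}'$ of spherical homogeneous spaces and a closed subvariety $W\subseteq\mathcal{O}$, one has $\trop_G(\overline{\Phi(W)})=\trop(\Phi)(\trop_G(W))$.}

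The inclusion ``$\trop(\Phi)(\trop_G(W))\subseteq\trop_G(\overline{\Phi(W)})$'' is formal. Fix $\gamma\in W(\mathbb{C}\{\!\{t\}\!\})$; then $\Phi\circ\gamma$ is a $\mathbb{C}\{\!\{t\}\!\}$-point of $\overline{\Phi(W)}$, and I would check the pointwise identity $\trop(\Phi)(\nu_\gamma)=\nu_{\Phi\circ\gamma}$. Both sides send $f\in\mathcal{M}(\mathcal{O}')$ to $\nu(\gamma^*(g\cdot(f\circ\Phi)))$ for $g$ sufficiently general: $G$-equivariance of $\Phi$ gives $g\cdot(f\circ\Phi)=(g\cdot f)\circ\Phi$, so that $\gamma^*(g\cdot\Phi^*f)=(\Phi\circ\gamma)^*(g\cdot f)$ holds identically in $g\in G$, whence the ``sufficiently general $g$'' conditions and the minima they compute coincide term by term. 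Thus $\trop(\Phi)(\nu_\gamma)=\nu_{\Phi\circ\gamma}\in\trop_G(\overline{\Phi(W)})$.

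For the reverse inclusion I must, given $\delta\in\overline{\Phi(W)}(\mathbb{C}\{\!\{t\}\!\})$, produce $\gamma\in W(\mathbb{C}\{\!\{t\}\!\})$ with $\nu_{\Phi\circ\gamma}=\nu_\delta$. The clean case is when the image point of $\delta$ already lies in $\Phi(W)$: picking $w\in W$ lying over it, one extends the composite $\kappa(\Phi(w))\hookrightarrow\mathbb{C}\{\!\{t\}\!\}$ along the finitely generated field extension $\kappa(\Phi(w))\subseteq\kappa(w)$ to an embedding $\kappa(w)\hookrightarrow\mathbb{C}\{\!\{t\}\!\}$ — possible since $\mathbb{C}\{\!\{t\}\!\}$ is algebraically closed of infinite transcendence degree over $\mathbb{C}$ — obtaining $\gamma\in W(\mathbb{C}\{\!\{t\}\!\})$ with $\Phi\circ\gamma=\delta$, so $\nu_{\Phi\circ\gamma}=\nu_\delta$. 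The difficulty, and the main obstacle, is the boundary case, where the image of $\delta$ lies in $\overline{\Phi(W)}\setminus\Phi(W)$ and no $\gamma$ with $\Phi\circ\gamma=\delta$ exists at all; one must instead realize $\nu_\delta$ by a $\gamma$ whose image under $\Phi$ merely induces the same valuation.

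To handle the boundary case I would use that $\Phi(W)$ contains a dense open subset $U$ of $V:=\overline{\Phi(W)}$ (Chevalley), so that by the clean case every $\nu_{\delta'}$ with $\delta'\in V(\mathbb{C}\{\!\{t\}\!\})$ supported on $U$ lies in $\trop(\Phi)(\trop_G(W))$; it then suffices to see that $\nu_\delta$ is a limit, in the topology on $\trop_G(\mathcal{O}')$, of such $\nu_{\delta'}$ — produced by a generic deformation of $\delta$ inside a chart of $V$ that moves its image into $U$ without disturbing the leading $t$-adic behaviour computing $\nu_\delta$ — and then to invoke that $\trop_G(W)$ is the support of a polyhedral complex, hence closed, and that $\trop(\Phi)$ is continuous by Proposition~\ref{trop map continuous}, so that its image $\trop(\Phi)(\trop_G(W))$ is closed and therefore contains the limit $\nu_\delta$. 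Combined with the forward inclusion, this yields the equality. I expect this boundary analysis — making the deformation of $\delta$ precise simultaneously for all test functions, or equivalently extracting the compatibility of $\trop_G$ with closure for subvarieties of a homogeneous space — to be the technical heart of the proof; the reductions and the equivariance identity are routine.
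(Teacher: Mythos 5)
Your reduction to the case $G=G'$, the orbit-by-orbit decomposition, the lifting of $\mathbb{C}\{\!\{t\}\!\}$-points using the fact that $\mathbb{C}\{\!\{t\}\!\}$ is algebraically closed of infinite transcendence degree, and the equivariance identity $\Phi^*(gf) = g\Phi^*(f)$ are exactly the paper's proof; your ``clean case'' spells out the lifting in more detail than the paper, which simply invokes surjectivity of $\Phi\colon\mathcal{O}\to\mathcal{O}'$, and this is a genuine improvement in rigor.

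The boundary case you describe, however, is not part of the paper's proof, and you have introduced it by silently replacing $\Phi(W)$ with $\overline{\Phi(W)}$ in your restatement of the orbit-level claim. The paper's statement and proof treat $\Phi(Y)$ as the set-theoretic (constructible) image, so a $\mathbb{C}\{\!\{t\}\!\}$-point of $\Phi(Y)$ is, by definition, a $\mathbb{C}\{\!\{t\}\!\}$-point of $\mathcal{O}'$ whose underlying scheme point lies in the set $\Phi(Y)$; your ``clean case'' then covers \emph{every} such point, and nothing further is required. Beyond being superfluous, your handling of the extra case contains an error: you deduce that $\trop(\Phi)(\trop_G(W))$ is closed from the closedness of $\trop_G(W)$ and the continuity of $\trop(\Phi)$, but a continuous image of a closed set need not be closed (the projection of $V(xy-1)\subset\mathbb{A}^2$ onto the $x$-axis is the standard counterexample). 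The conclusion happens to be salvageable because $\trop(\Phi)$ is a linear projection and $\trop_G(W)$ is a rational polyhedral complex, so its image is again polyhedral and hence closed, but that is a different argument from the one you gave. If you did want the stronger statement with closures, the right place for it is Theorem \ref{closurecommutesspherical}, which the paper proves later by entirely different means; Proposition \ref{commute} itself does not require it.
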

\begin{proof}
As before, we may assume that $G = G'$ and $\varphi$ is the identity map.

Let $\mathcal{O} \subseteq X$ be an arbitrary $G$-orbit of $X$ and consider the restriction of $\trop(\Phi)$ to the valuation cone $\mathcal{V}(\mathcal{O})$ associated to $\mathcal{O}$.
By definition of $\trop(\Phi)$, the image of $\mathcal{V}(\mathcal{O})$ lies in the valuation cone $\mathcal{V}(\mathcal{O}')$ of some orbit $\mathcal{O}' \subseteq X'$.
We will prove the statement in the case that $Y \subseteq \mathcal{O}$ is a subvariety of the spherical homogeneous space $\mathcal{O}$.
The full statement follows directly by considering the individual intersections of a subvariety with each orbit.

Let $\gamma: \spec{\mathbb{C}\{\!\{t\}\!\}} \rightarrow Y$ be a $\mathbb{C}\{\!\{t\}\!\}$-point of $Y$.
Then $\Phi \circ \gamma$ is a $\mathbb{C}\{\!\{t\}\!\}$-point of $\Phi(Y)$ and all $\mathbb{C}\{\!\{t\}\!\}$-points of $\Phi(Y)$ arise in this way.
This is because $\Phi: \mathcal{O} \rightarrow \mathcal{O}'$ is surjective as it is $G$-equivariant with respect to the surjective map $\varphi$.
Thus, $\trop_{G}(\Phi(Y))$ is defined as follows:
\[
\trop_{G}(\Phi(Y)) := \set{ \begin{array}{c}
k(\Phi(Y))^{(B)} \rightarrow \mathbb{Q} \\
f \mapsto \nu((\Phi \circ \gamma)^\ast(gf)) 
\end{array}
: \gamma \in Y(\mathbb{C}\{\!\{t\}\!\})},
\]
where $g$ is chosen to be sufficiently general for a given $f$.
The tropicalization of $Y$ is 
\[
\trop_G(Y) := \set{ \begin{array}{c}
k(Y)^{(B)} \rightarrow \mathbb{Q} \\
f \mapsto \nu(\gamma^\ast(gf)) 
\end{array} : \gamma \in Y(\mathbb{C}\{\!\{t\}\!\})},
\]
again for sufficiently general $g$.
Applying $\trop(\Phi)$ means taking $\nu_\gamma \mapsto \nu_\gamma \circ \Phi^*$, so this gives us $\trop(\Phi)(\trop_G(Y))$:
\[
\trop(\Phi)(\trop_G(Y)) := \set{ \begin{array}{c}
k(\Phi(Y))^{(B)} \rightarrow \mathbb{Q} \\
f \mapsto \nu(\gamma^\ast(g\Phi^\ast(f))) 
\end{array} : \gamma \in Y(\mathbb{C}\{\!\{t\}\!\})}.
\]
To prove the proposition, we need to show that $\nu((\Phi \circ \gamma)^*(gf)) = \nu(\gamma^*(g\Phi^*(f)))$ for all $f \in k(\Phi(Y))^{(B)}$ where $g$ is chosen to be sufficiently general for both $f$ and $\Phi^*(f)$.
Note that $(\Phi \circ \gamma)^* = \gamma^* \circ \Phi^*$, so it will be sufficient to show that $\Phi^*(gf) = g\Phi^*(f)$.
For $x \in Y$ we have the following, using the fact that $\Phi$ is $G$-equivariant:
\[
\Phi^\ast(gf)(x) = (gf \circ \Phi)(x) = f(g^{-1}\Phi(x)) = (f \circ \Phi)(g^{-1}x) = g(f \circ \Phi)(x) = g\Phi^*(f)(x). 
\]
Thus, $\Phi^\ast(gf) = g\Phi^*(f)$ and so the proposition holds.
\end{proof}

\section{Tropicalizing Homogeneous Spaces via Toric Embeddings}\label{homspaces}

In \cite{Ga}, Gagliardi proves a theorem relating the valuation cone of $G/H$ to tropicalization. Inspired by his result, we  find an alternate means for tropicalizing a subvariety of a spherical homogeneous space. The purpose of this section is to set up Gagliardi's theory and show how to recover from it Vogiannou's spherical tropicalization (Theorem \ref{V=G}).

Throughout, we will always assume that $G$ is of simply connected type, which means that $G = G^{ss} \times C$ where $G^{ss}$ is semi-simple simply connected and $C$ is a torus.
Every connected reductive group has a finite covering by a group of simply connected type with the same embedding, so this is not a restrictive assumption.

We work initially over a spherical homogeneous space $G/H$ with trivial divisor class group; the case of non-trivial divisor class group will be handled later.
We will describe how to associate a toric variety $Z_0$ to $G/H$.
Each color in the palette of $G/H$ is a prime divisor $D_i = V(f_i)$.
The orbit of $f_i$ in $\mathbb{C}(G/H)$ under the action of $G$ spans a $G$-module of some rank $s_i$.
We choose a basis $\set{f_{i1} := f_i,f_{i2},\ldots,f_{is_i}} \subseteq G \cdot f_i$ for this $G$-module.
Further, $\Gamma\left(G/H,\mathcal{O}^*_{G/H} \right)/\mathbb{C}^*$ is a finitely generated free abelian group with basis $\{g_k\}_{k=1}^m$.
The characters associated to $f_i$ and $g_k$ span the lattice $\mathcal{M}$ of characters of $B$ semi-invariant rational functions $k(G/H)^{(B)}$.
Denote these characters by $v_i^*$ and $w_k^*$, respectively.
Define a toric variety
\[
Z_0 := (\mathbb{C}^{s_1} \setminus \set{0}) \times \cdots \times (\mathbb{C}^{s_r} \setminus \set{0}) \times (\mathbb{C}^*)^m.
\] 
with coordinates $f_{ij}$ and $g_k$, matching the basis of $\mathcal{M}$. Denote the lattice of cocharacters of $Z_0$ by $N \cong \mathbb{Z}^{s_1 + \cdots + s_r + m}$ with basis $\{v_{11},v_{12},\ldots,v_{rs_r},w_1,\ldots,w_m \}$.
Write $\mathbb{T} \cong (\mathbb{C}^*)^{s_1 + \cdots + s_r + m}$ for the dense torus in $Z_0$.
The inclusion $\iota: G/H \hookrightarrow Z_0$ defined by $x \mapsto (f_{ij}(x))_{i,j} \times (g_k(x))_k$ is a closed embedding.
This induces a natural action of $G$ on $Z_0$ commuting with $\iota$.
The coordinate ring of $Z_0$ has coordinates $S_{ij}$ for $1 \leq i \leq r$ and $1 \leq j \leq s_i$ and $T_k$ for $1 \leq k \leq m$.
The inclusion $\iota$ is dual to the map $\Psi: \mathbb{C}[Z_0] \rightarrow \mathbb{C}[G/H]$ defined by $S_{ij} \mapsto f_{ij}$ and $T_k \mapsto g_k$.
Let $\mathfrak{p}$ denote the kernel of $\Psi$ so that $\mathbb{C}[Z_0]/\mathfrak{p} \cong \mathbb{C}[G/H]$.
Finally, the lattice $\mathcal{N}$ dual to $\mathcal{M}$ has basis $\{v_1,\ldots,v_r,w_1,\ldots,w_m \}$ and we define an inclusion $\mathcal{N} \hookrightarrow N$ by $v_i \mapsto v_{i1} + \cdots + v_{is_i}$ and $w_k \mapsto w_k$.

Theorem \ref{GagVal} shows how the embedding $G/H \hookrightarrow Z_0$ can be related to the valuation cone of $G/H$:

\begin{theorem}[\cite{Ga}, Theorem 1.7]\label{GagVal}
$\mathcal{V}(G/H) = \trop_\mathbb{T}(G/H \cap \mathbb{T}) \cap \mathcal{N}_\mathbb{Q}$.
\end{theorem}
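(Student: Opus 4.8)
The plan is to realize both sides as sets of (classes of) valuations on $\mathbb{C}(G/H)^\ast$ and compare them directly, using the identity $\trop_G(G/H)=\mathcal{V}(G/H)$ recalled above (Vogiannou's theorem, \cite{Vo}) and the structure of the embedding $\iota$. Write $M$ for the character lattice of $\mathbb{T}$, with basis $\{v_{ij}^\ast,w_k^\ast\}$ dual to $\{v_{ij},w_k\}$; the coordinate $S_{ij}$ has character $v_{ij}^\ast$ and pulls back under $\iota$ to $f_{ij}$. The inclusion $\mathcal{N}\hookrightarrow N$ realizes $\mathcal{N}_\mathbb{Q}$ as the subspace $\{n\in N_\mathbb{Q}:\langle n,v_{ij}^\ast\rangle=\langle n,v_{i1}^\ast\rangle\text{ for all }i,j\}$, and its dual $M\twoheadrightarrow\mathcal{M}$ sends $v_{ij}^\ast\mapsto\chi_{f_i}$ and $w_k^\ast\mapsto\chi_{g_k}$. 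Finally, recall that the $\mathbb{Q}$-points of $\trop_\mathbb{T}(G/H\cap\mathbb{T})$ are exactly the points $\trop(\gamma)\colon\chi\mapsto\nu(\gamma^\ast\chi)$ for $\gamma\in(G/H\cap\mathbb{T})(\mathbb{C}\{\!\{t\}\!\})$ (by definition, or by the fundamental theorem of tropical geometry, \cite{MS}, Theorem 3.2.3). So it suffices to prove two inclusions at the level of such $\gamma$.

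For ``$\supseteq$'', let $w=\trop(\gamma)\in\mathcal{N}_\mathbb{Q}$ with $\gamma\in(G/H\cap\mathbb{T})(\mathbb{C}\{\!\{t\}\!\})$, viewed as a $\mathbb{C}\{\!\{t\}\!\}$-point of $G/H$. Vogiannou's construction attaches to $\gamma$ the $G$-invariant valuation $\nu_\gamma\in\mathcal{V}$, $f\mapsto\min_{g\in G}\nu(\gamma^\ast(gf))$. I would show $\nu_\gamma=w$ by evaluating on the generators $\chi_{f_i},\chi_{g_k}$ of $\mathcal{M}$. Since each $f_{ij}$ lies in the orbit $G\cdot f_i$, we get $\nu_\gamma(f_i)\le\min_j\nu(\gamma^\ast f_{ij})$; conversely, writing $gf_i=\sum_j c_j f_{ij}$ and applying the ultrametric inequality gives $\nu(\gamma^\ast(gf_i))\ge\min_j\nu(\gamma^\ast f_{ij})$ for every $g\in G$, so $\nu_\gamma(f_i)=\min_j\nu(\gamma^\ast f_{ij})$. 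The hypothesis $w\in\mathcal{N}_\mathbb{Q}$ says precisely that $\nu(\gamma^\ast f_{ij})$ is independent of $j$, hence equals $\nu(\gamma^\ast f_i)=w(\chi_{f_i})$. For the units $g_k$, a standard fact (Rosenlicht's unit theorem applied to $G\times G/H$) shows $g\cdot g_k\in\mathbb{C}^\ast g_k$, so $\nu_\gamma(g_k)=\nu(\gamma^\ast g_k)=w(\chi_{g_k})$. Therefore $\nu_\gamma=w$ in $\mathcal{N}_\mathbb{Q}$ and $w\in\mathcal{V}$.

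For ``$\subseteq$'', let $v\in\mathcal{V}(G/H)$. Since $v$ is $G$-invariant and $f_{ij}\in G\cdot f_i$, automatically $v(f_{ij})=v(f_i)$ for all $j$; hence, read off on the coordinates $f_{ij},g_k$ of $Z_0$, the valuation $v$ already lies in the subspace $\mathcal{N}_\mathbb{Q}\subseteq N_\mathbb{Q}$ and restricts there to the original $v$. It remains to realize $v$ as $\trop(\gamma)$ for some $\gamma\in(G/H\cap\mathbb{T})(\mathbb{C}\{\!\{t\}\!\})$. By Vogiannou's theorem there is $\gamma_0\in(G/H)(\mathbb{C}\{\!\{t\}\!\})$ with $v(f)=\min_g\nu(\gamma_0^\ast(gf))$ for all $f$. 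Choosing $g_0$ in the intersection of the finitely many dense open subsets of $G$ on which this minimum is attained for $f$ running over the generators $f_{ij}$ and $g_k$, the translated arc $\gamma:=g_0\gamma_0$ satisfies $\nu(\gamma^\ast f_{ij})=v(f_i)$ and $\nu(\gamma^\ast g_k)=v(g_k)$; in particular these orders are all finite, so $\gamma$ factors through the dense open $G/H\cap\mathbb{T}$, and $\trop(\gamma)=v$.

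The step I expect to be most delicate is this realization in ``$\subseteq$'': one needs a \emph{single} group element that simultaneously forces every generator to attain its $G$-minimal order while keeping the arc off the complement of $G/H\cap\mathbb{T}$, which works only because finitely many functions are involved. The conceptual heart of the statement is the equivalence, for the data at hand, between ``$v$ is $G$-invariant'' and ``$\nu(\gamma^\ast f_{ij})$ is independent of $j$'': that is exactly why intersecting with the linear subspace $\mathcal{N}_\mathbb{Q}$, rather than applying a genuinely piecewise-linear retraction onto $\mathcal{N}_\mathbb{Q}$, recovers the valuation cone here. Finally, if one wants the later Theorem \ref{V=G} to remain logically independent of \cite{Vo}, the appeals to Vogiannou's theorem above can be replaced by a direct argument: divisorial valuations in $\mathcal{V}$ are realized by arcs transverse to a general point of the corresponding $G$-stable divisor (chosen off the complement of $G/H\cap\mathbb{T}$), these are dense in the rational polyhedral cone $\mathcal{V}$ by Brion's description of the valuation cone, and $\trop_\mathbb{T}(G/H\cap\mathbb{T})$ is closed.
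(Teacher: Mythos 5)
The paper does not prove Theorem \ref{GagVal}; it is imported verbatim from Gagliardi \cite{Ga}, Theorem~1.7, as the attribution already indicates. There is therefore no in-paper proof to compare against, and your task here was really to supply a proof that the paper treats as black-boxed.

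That said, your reconstruction is sound and hits the mechanism the paper later re-uses in the proof of Theorem~\ref{V=G}. For ``$\supseteq$'' you establish the identity $\nu_\gamma(f_i)=\min_j\nu(\gamma^\ast f_{ij})$ by sandwiching: each $f_{ij}$ is a translate of $f_i$, so $\nu_\gamma(f_i)\le\nu(\gamma^\ast f_{ij})$, while the ultrametric inequality applied to an arbitrary $gf_i=\sum_j c_j f_{ij}$ gives the reverse bound; the hypothesis that $\trop(\gamma)$ lies in the linear subspace $\mathcal{N}_\mathbb{Q}$ is exactly the statement that $\nu(\gamma^\ast f_{ij})$ does not depend on $j$, which collapses the $\min$ and shows $\nu_\gamma$ agrees with $\trop(\gamma)$ on a spanning set of $\mathcal{M}$. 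This matches the computation in the paper's proof of Theorem~\ref{V=G} (where, for general $Y\subseteq G/H$, the lack of $j$-independence is exactly what forces the introduction of the piecewise map $\psi$ in place of intersecting with $\mathcal{N}_\mathbb{Q}$). Your $g_k$ step, via $G$-semiinvariance of global units, is the same fact the paper draws from \cite{KKV}, Proposition~1.3. For ``$\subseteq$'' your observation that one may choose a single generic translate $g_0$ valid simultaneously for the finitely many generating functions, so that the translated arc lands in $\mathbb{T}$ and realizes $v$, is the right move; the finiteness of $v(f_i)$ and $v(g_k)$ is what keeps $\gamma$ off the coordinate divisors and hence inside the dense torus. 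Your concluding remark---that one could avoid Vogiannou's realization theorem altogether by running the argument on divisorial valuations and then using closedness of $\trop_\mathbb{T}$ together with rational-polyhedrality of $\mathcal{V}$---is a reasonable way to decouple the statement from \cite{Vo} if desired, though as the paper's exposition makes clear, the logical dependence on \cite{Vo} is accepted here anyway.

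The only small thing to keep straight is the action convention: with $(gf)(x)=f(g^{-1}x)$, the pullback $\gamma^\ast f_{ij}$ along $\gamma=g_0\gamma_0$ equals $\gamma_0^\ast(g_0^{-1}f_{ij})$, so the generic set you intersect must be the \emph{inverse} of the open set on which the Vogiannou minimum is attained; you handle this correctly in substance but it is worth stating explicitly, since mixing up $g_0$ and $g_0^{-1}$ is an easy way to derail this kind of translation argument.
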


We can also write this as $\trop_G(G/H) = \trop_\mathbb{T}(G/H \cap \mathbb{T}) \cap \mathcal{N}_\mathbb{Q}$.
In fact, because $\mathcal{N}_\mathbb{Q} \subseteq N_\mathbb{Q} = \trop_\mathbb{T}(\mathbb{T})$, we may write $\trop_G(G/H) = \trop_\mathbb{T}(G/H) \cap \mathcal{N}_\mathbb{Q}$.
Note that $\trop_\mathbb{T}(G/H)$ is potentially ill-defined if one does not have a notion of extended tropicalization since $G/H$ is not necessarily contained in $\mathbb{T}$.

Theorem \ref{GagVal} does not extend to subvarieties $Y \subseteq G/H$.
That is, $\trop_G(Y) \neq \trop_\mathbb{T}(Y) \cap \mathcal{N}_\mathbb{Q}$ in general.
Ultimately, accounting for subvarieties of $G/H$ requires a new result independent of Theorem \ref{GagVal}, which we describe now.
Note that 
\[
\trop_{\mathbb{T}}(Z_0) = \left(\overline{\mathbb{Q}}^{s_1} \setminus \set{\infty} \right) \times \cdots \times \left(\overline{\mathbb{Q}}^{s_r} \setminus \set{\infty} \right) \times \mathbb{Q}^m
\]
and define a map $\psi: \trop_{\mathbb{T}}(Z_0) \rightarrow \mathcal{N}_\mathbb{Q}$ as follows:
\begin{align*}
\psi: \trop_{\mathbb{T}}(Z_0) & \rightarrow \mathcal{N}_\mathbb{Q} \\
(a_{11},\ldots,a_{1s_1},a_{21},\ldots,a_{rs_r},b_1,\ldots,b_m) & \mapsto \left(\min_{1 \leq j \leq s_1} \set{a_{1j}},\ldots,\min_{1 \leq j \leq s_r}\set{a_{rj}},b_1,\ldots,b_m \right).
\end{align*}

The following theorem is the core idea of this paper. The further work ultimately simply expands on this result. 

\begin{theorem}\label{V=G}
If $Y \subseteq G/H$ is a closed subvariety and $G/H$ has trivial divisor class group, then
$\trop_G(Y) = \psi\left(\trop_\mathbb{T}(Y) \right)$.
\end{theorem}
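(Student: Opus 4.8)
The plan is to prove the pointwise identity $\nu_\gamma = \psi(v_\gamma)$ for every $\mathbb{C}\{\!\{t\}\!\}$-point $\gamma \in Y(\mathbb{C}\{\!\{t\}\!\})$, where $\nu_\gamma \in \mathcal{N}_\mathbb{Q}$ is the $G$-valuation attached to $\gamma$ and $v_\gamma := \big((\nu(\gamma^\ast f_{ij}))_{i,j},\,(\nu(\gamma^\ast g_k))_k\big) \in \trop_\mathbb{T}(Z_0)$ is the toric tropicalization of $\gamma$ viewed as a $\mathbb{C}\{\!\{t\}\!\}$-point of $Z_0$. Granting this, the theorem follows at once: by definition $\trop_G(Y) = \{\nu_\gamma : \gamma \in Y(\mathbb{C}\{\!\{t\}\!\})\}$, while the fundamental theorem of tropical geometry for subvarieties of toric varieties over $\mathbb{C}\{\!\{t\}\!\}$ (cf.\ \cite{MS}) identifies $\trop_\mathbb{T}(Y)$ with $\{v_\gamma : \gamma \in Y(\mathbb{C}\{\!\{t\}\!\})\}$; applying $\psi$ to the latter and using $\nu_\gamma = \psi(v_\gamma)$ gives $\trop_G(Y)$. (Since $\psi$ is continuous and piecewise-linear, the argument is unaffected if one instead reads these tropicalizations as the closures of the respective valuation-sets.)

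To prove $\nu_\gamma = \psi(v_\gamma)$ it suffices to compare their values on the generators $v_i^\ast$ and $w_k^\ast$ of $\mathcal{M}$. For a unit generator $w_k^\ast$ --- the $B$-weight of $g_k$ --- note that $\Gamma(G/H,\mathcal{O}^\ast_{G/H})/\mathbb{C}^\ast$ is finitely generated, hence discrete, so the connected group $G$ acts trivially on it and therefore scales each $g_k$ by a character $\lambda_k : G \to \mathbb{C}^\ast$. Then $\nu(\gamma^\ast(g\cdot g_k)) = \nu(\lambda_k(g)) + \nu(\gamma^\ast g_k) = \nu(\gamma^\ast g_k)$ for every $g \in G$, which is exactly the $w_k$-coordinate of $\psi(v_\gamma)$.

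For a color generator $v_i^\ast$ --- the $B$-weight of $f_i = f_{i1}$ --- recall that $f_{i1},\dots,f_{is_i}$ is a basis of the $G$-submodule of $\mathbb{C}[G/H]$ generated by $f_i$, so we may write $g\cdot f_{i1} = \sum_{j=1}^{s_i} c_j(g)\,f_{ij}$ with $c_j \in \mathbb{C}[G]$; these $c_j$ are linearly independent, since a linear relation among them would produce a linear functional vanishing on $G\cdot f_{i1}$ and hence on its span. Thus, setting $d_i := \min_j \nu(\gamma^\ast f_{ij})$, we have $\nu_\gamma(v_i^\ast) = \nu\big(\sum_j c_j(g)\,\gamma^\ast f_{ij}\big) \ge d_i$ for all $g$, with equality precisely when the coefficient of $t^{d_i}$ does not vanish. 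Writing $J_i$ for the set of indices attaining $d_i$ and $\ell_{ij} \ne 0$ for the leading coefficient of $\gamma^\ast f_{ij}$, that coefficient equals $\sum_{j\in J_i}\ell_{ij}\,c_j(g)$, a nonzero element of $\mathbb{C}[G]$ by linear independence of the $c_j$; hence it is nonvanishing on a dense open subset of $G$. Intersecting these dense opens over the finitely many colors --- the domain-of-definition requirement being automatic here since the $f_{ij}$ are global functions --- yields a dense open set of $g$ that is simultaneously sufficiently general for all of the $v_i^\ast$, and for such $g$ we get $\nu_\gamma(v_i^\ast) = d_i$, matching the $v_i$-coordinate of $\psi(v_\gamma)$.

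The only substantial step is this genericity argument: one must check that the ``sufficiently general $g$'' in the definition of $\nu_\gamma$ may be chosen inside the locus where none of the leading forms $\sum_{j\in J_i}\ell_{ij}c_j(g)$ vanishes. The linear independence of the matrix coefficients $c_j$ --- equivalently, that the $G$-orbit of $f_i$ spans the module we chose --- together with finiteness of the palette, is precisely what makes this work; the remaining bookkeeping (compatibility of ``sufficiently general'' across the finitely many generators, and the passage between valuation-sets and their closures) is routine.
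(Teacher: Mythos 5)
Your proof is correct and takes essentially the same approach as the paper's: both establish the pointwise identity $\nu_\gamma = \psi(\tilde{\nu}_\gamma)$ by comparing the two valuations on the generators $v_i^\ast$ and $w_k^\ast$ of $\mathcal{M}$, with genericity of $g$ ensuring that the valuation of $\gamma^\ast(gf_i)$ equals $\min_j \nu(\gamma^\ast f_{ij})$. Your linear-independence argument for the matrix coefficients $c_j(g)$ makes the genericity step more explicit than the paper's terse remark that ``generically the minimum is met at only one monomial,'' and your observation that $G$ scales each $g_k$ by a character reproduces the content of Proposition 1.3 of \cite{KKV}, which the paper cites.
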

\begin{proof}
We first consider the left-to-right inclusion.
Let $\gamma$ be a $\mathbb{C}\{\!\{t\}\!\}$-point of $Y \subseteq G/H$.
Vogiannou's definition gives us a $G$-invariant valuation $\nu_\gamma \in \trop_G(Y) \subseteq \mathcal{N}_\mathbb{Q}$ defined by $f \mapsto \nu(\gamma^*(gf))$ for sufficiently general $g \in G$.
The dual lattice $\mathcal{M}$ to $\mathcal{N}$ is spanned by characters associated to the $f_i$ and $g_k$, so we only need to know how $\nu_\gamma$ behaves on these functions to completely determine it as an element of $\mathcal{N}_\mathbb{Q}$.
Consider $\nu_\gamma(f_i)$ for some $i$, suppose that $g \in G$ is sufficiently general so that $\nu_\gamma(f_i) = \nu(\gamma^*(gf_i))$, and write $gf_i = \sum_{j=1}^{s_i} a_jf_{ij}$ where $a_j \in \mathbb{C}$.
Generically, the minimum $\min_j\set{\nu(a_jf_{ij}(\gamma))}$ is met at only one monomial, so we may write $\nu_\gamma(f_i) = \nu(a_jf_{ij}(\gamma))$ for some $j$.
Note that $a_jf_{ij}(\gamma) \neq 0$ as otherwise $f_i$ would be zero on all of $G/H$.
Further, observe that by this argument $\nu_\gamma(f_{ij}) = \nu_\gamma(f_{ik})$ for all $j$ and $k$, so $\trop_{G}(Y) \subseteq \mathcal{N}_\mathbb{Q} \subseteq N_\mathbb{Q}$.
Applying $\psi$ to the $\mathbb{T}$-invariant valuation induced by $\gamma$ gives $\nu_\gamma$, defined by $f_i \mapsto \nu(a_jf_{ij}(\gamma))$.

By Proposition 1.3 of \cite{KKV}, the $g_k$ are all $G$ eigenvectors, so it follows that the $\mathbb{T}$-invariant and $G$-invariant valuations induced by $\gamma$ act identically on the $g_k$.
This implies the first inclusion.

Conversely, suppose that $\gamma$ is a $\mathbb{C}\{\!\{t\}\!\}$-point of $Y \subseteq Z_0$ and consider $\psi\left(\tilde{\nu}_\gamma \right)$, where $\tilde{\nu}_\gamma$ is the $\mathbb{T}$-invariant valuation induced by $\gamma$.
For fixed $i$, suppose that $\min_j\set{\nu(f_{ij}(\gamma))} = \nu(f_{ik}(\gamma))$ for some $k$.
Note that by definition of $Z_0$ we cannot have $f_{ij}(\gamma) = 0$ for all $j$, so the minimum is finite and thus the image of $Y$ under $\psi$ is contained in $\mathcal{N}_\mathbb{Q}$.
For generic $g \in G$, $gf_i$ has a nonzero coefficient $a_{ik}$ on $f_{ik}$, so it follows that $\nu_\gamma(f_i) = \nu(a_{ik}f_{ik}(\gamma)) = \psi(\tilde{\nu}_\gamma)(f_i)$, where $\nu_\gamma$ is the $G$-invariant valuation induced by $\gamma$.
Because the $g_k$ are $G$-eigenvectors, $\tilde{\nu}_\gamma(g_k) = \nu_\gamma(g_k)$ for all $k$ and so $\psi\left( \tilde{\nu}_\gamma \right) = \nu_\gamma \in \trop_G(Y)$.
\end{proof}

\begin{example}\label{punctured affine in toric}
Let $G = \Gl_2$ and $H$ be the subgroup of upper triangular matrices with ones on the diagonal.
Then $G/H \cong \mathbb{C}^2 \setminus \set{0}$ where the action of $G$ is given by matrix multiplication on a vector $(x \; y)^T$. 
The Borel subgroup $B$ of upper triangular matrices has an open orbit $D(y)$, the principal open set where $y$ doesn't vanish.
There is one color $V(y) \subset G/H$ and $\mathcal{V} = \mathcal{N}_\mathbb{Q} \cong \mathbb{Q}$.

In this setting, $m = 0$, $r = 1$, $s_1 = 2$, and we may write $f_{11} := y$ and $f_{12} := x$. 
The lattice $\mathcal{N}$ is spanned by a single ray $v_1$. The left-hand side of Figure \ref{GagAff} shows the vectors $v_1$, $v_{11}$, and $v_{12}$. Then $Z_0 \cong \mathbb{C}^2 \setminus \set{0}$ and the inclusion $G/H \hookrightarrow Z_0$ is an isomorphism given by switching coordinates: $(a,b) \mapsto (b,a)$.
The tropicalization $\trop_\mathbb{T}(G/H)$ is $\overline{\mathbb{Q}}^2 \setminus \set{\infty}$ and the inclusion $\mathcal{N}_\mathbb{Q} \hookrightarrow N_\mathbb{Q}$ is the diagonal map $\mathbb{Q} \hookrightarrow \mathbb{Q}^2$. 
Figure \ref{GagAff} illustrates Theorem \ref{V=G} in the case where $Y = G/H$. In the figure, $\trop_G(G/H) = \mathcal{V}$ appears as a dotted line and we conclude that $\mathcal{V} = \mathcal{N}_\mathbb{Q}$ since $\trop_\mathbb{T}(\mathbb{C}^2 \setminus \set{0})$ surjects onto $\mathcal{N}_\mathbb{Q}$ via $\psi$.
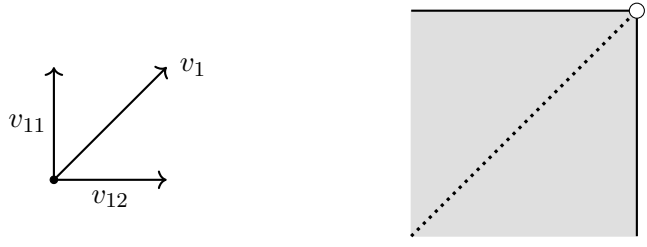
\begin{figure}[H]
\begin{tikzpicture}
\draw[fill] (9.25,.75) circle(.05);
\draw[thick,->] (9.25,.75)--(10.75,.75);
\draw[thick,->] (9.25,.75)--(9.25,2.25);
\draw[thick,->] (9.25,.75)--(10.75,2.25); 

\draw (8.9,1.5) node[]{$v_{11}$};
\draw (10,.5) node[]{$v_{12}$};
\draw (11.1,2.25) node[]{$v_{1}$};

\draw[gray!25,fill=gray!25] (14,0) rectangle (17,3);
\draw[thick] (14,3)--(17,3)--(17,0);
\draw[very thick,dotted] (14,0)--(17,3);
\draw[fill=white] (17,3) circle(.1); 
\end{tikzpicture}
\caption{The vectors $v_1$, $v_{11}$, and $v_{12}$ of Example \ref{punctured affine in toric} (left) and the tropicalization of the punctured plane with $\mathcal{N}_\mathbb{Q}$ embedded as a dotted line (right)}
\label{GagAff}
\end{figure}
This result allows us to recover a description due to Vogiannou of the tropicalization of subvarieties of $G/H$ in this setting (\cite{Vo}, Example 3.10).
Vogiannou considers a curve $C$ in $G/H = \mathbb{C}^2 \setminus \set{0}$ given by a polynomial $f(x,y)$.
He shows that $\trop_G(C)$ is a ray oriented to the left in $\mathcal{V} \cong \mathbb{Q}$ when the constant term of $f$ is nonzero and is all of $\mathcal{V}$ when the constant term is zero.
The condition that the constant term of $f(x,y)$ is zero is equivalent to saying that the zero vector is contained in the closure of $C$ in $\mathbb{C}^2$.
By Proposition 6.3.5 of \cite{MS}, this occurs if and only if $\trop_\mathbb{T}(C)$ intersects the interior of the first quadrant.
In this case, $\trop_\mathbb{T}(C)$ projects onto the right-hand ray of $\mathcal{V}$ under the map $\psi$, so we conclude that $\trop_G(C) = \mathcal{V}$. 
\end{example}

\begin{example}\label{speciallinear}
Let $G := \Sl_2 \times \Sl_2$, $H$ be the diagonal subgroup, and $B$ consist of ordered pairs of upper and lower triangular matrices.
Then $G/H \cong \Sl_2$ with coordinates $x_{ij}$ for $i,j = 1,2$ and the action of $G$ is $(g,h) \cdot (x_{ij}) = g(x_{ij})h^{-1}$.
The only $B$ semi-invariant rational function is $f_1 := x_{22}$ with associated character $\chi: B \rightarrow \mathbb{C}^*$ defined by $((a_{ij}),(b_{ij})) \mapsto a_{22}^{-1}b_{22}$. 
The orbit of $x_{22}$ under the action of $G$ is 
\[
G \cdot x_{22} = \set{-g_{21}h_{12}x_{11} + g_{21}h_{11}x_{12} - g_{22}h_{12}x_{21} + g_{22}h_{11}x_{22} : (g_{ij}),(h_{ij}) \in \Sl_2 }
\]
It follows that $G \cdot f_1$ has rank four and we may choose as our basis $f_{11} := f_1 = x_{22}$, $f_{12} := x_{21}$, $f_{13} := x_{12}$, and $f_{14} := x_{11}$.
Finally, $\Gamma\left(G/H,\mathcal{O}_{G/H}^*\right)/\mathbb{C}^*$ is trivial, so $Z_0 \cong \mathbb{C}^4 \setminus \set{0}$.
The inclusion $G/H \hookrightarrow Z_0$ is then given by $(x_{ij}) \mapsto (x_{22},x_{21},x_{12},x_{11})$.

The image of $G/H$ in $Z_0$ is $V(f_{11}f_{14} - f_{12}f_{13} - 1)$.
The tropicalization of this variety is the set of extended valuations $\mu$ such that the minimum 
\[
\min\set{\mu(f_{11}) + \mu(f_{14}), \mu(f_{12}) + \mu(f_{13}), 0}
\]
is met more than once.
This forces $\mu(f_{1j}) \leq 0$ for some $j$.
Applying $\psi$ therefore takes us to the ray in $\mathcal{N}_\mathbb{Q} \cong \mathbb{Q}$ spanned by the valuation $f_1 \mapsto -1$, so $\mathcal{V}$ is a half-space.
\end{example}

We must still consider the case that the spherical homogeneous space has non-trivial divisor class group.
Following \cite{Ga}, we will use bold-faced characters $\bm{G}$ and $\bm{H}$ when the spherical homogeneous space $\bm{G}/\bm{H}$ does not have trivial divisor class group.
In general, the spherical data associated to $\bm{G}/\bm{H}$ is notated as with $G/H$ but bold-faced.
We still assume that $\bm{G} = \bm{G}^{ss} \times \bm{C}$ is of semisimple type.
For each $\bm{D}_i$ in the palette $\bm{\mathcal{D}} := \set{\bm{D}_1,\ldots,\bm{D}_r}$, we consider the pullback of $\bm{D}_i$ under the projection map $\bm{G} \rightarrow \bm{G}/\bm{H}$.
By results in \cite{Br07}, there is a unique $\bm{f}_i \in \mathbb{C}[\bm{G}]$ such that $V(\bm{f}_i)$ cuts out the pre-image of $\bm{D}_i$, $\bm{f}_i$ is $\bm{C}$-invariant, and $\bm{f}_i(1) = 1$.
Then $\bm{H}$ acts from the right on $\bm{f}_i$ with character $\bm{\chi}_i \in \mathcal{X}(\bm{H})$. We will come back to these $\bm{f}_i$ in \textsection \ref{badcolors}.

Gagliardi defines $G := \bm{G} \times (\mathbb{C}^*)^{\bm{\mathcal{D}}}$ and 
\[
H := \set{(h,\bm{\chi}_1(h),\ldots,\bm{\chi}_r(h)) : h \in \bm{H}}.
\]
The Borel subgroup is $B := \bm{B} \times (\mathbb{C}^*)^{\bm{\mathcal{D}}}$.
There is a natural isomorphism $H \cong \bm{H}$ given by projection to the first coordinate and a natural morphism $\bm{\pi}: G/H \rightarrow \bm{G}/\bm{H}$ with the induced surjective map $\bm{\pi}_*: \mathcal{N}_\mathbb{Q} \rightarrow \bm{\mathcal{N}}_\mathbb{Q}$.
The $B$-stable prime divisors in the palette $\mathcal{D}$ of $G/H$ are precisely the pullbacks under $\bm{\pi}$ of the colors in $\bm{\mathcal{D}}$.
The character group of $(\mathbb{C}^*)^{\bm{\mathcal{D}}}$ is isomorphic to $\mathbb{Z}^{\bm{\mathcal{D}}}$, and we denote its basis by $\set{\eta_1,\ldots,\eta_r}$.
Gagliardi shows that $G/H$ has trivial divisor class group (\cite{Ga}, Corollary 3.2) and that $\mathcal{V} = \bm{\pi}_*^{-1}(\bm{\mathcal{V}})$ (\cite{Ga}, Proposition 3.3).

Theorem \ref{V=G} can now be easily generalized:
\begin{theorem}\label{V=G2}
If $\bm{Y} \subseteq \bm{G}/\bm{H}$ is a closed subvariety, then
\[
\trop_{\bm{G}}(\bm{Y}) = (\trop(\bm{\pi}) \circ \psi) \left( \trop_{\bm{\mathbb{T}}}\left(\bm{\pi}^{-1}\left(\bm{Y} \right)\right) \right).
\]
\end{theorem}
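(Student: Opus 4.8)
The plan is to reduce the statement to Theorem \ref{V=G}, applied to the auxiliary homogeneous space $G/H$ produced by Gagliardi's construction, and then to transport the resulting equality along $\bm{\pi}$ using Proposition \ref{commute}. The key structural facts are that $G/H$ has trivial divisor class group (\cite{Ga}, Corollary 3.2), so Theorem \ref{V=G} applies to its closed subvarieties, and that $\bm{\pi}: G/H \to \bm{G}/\bm{H}$ is a dominant morphism equivariant with respect to the projection $\varphi: G = \bm{G} \times (\mathbb{C}^*)^{\bm{\mathcal{D}}} \to \bm{G}$; since $\varphi^{-1}(\bm{H}) = \bm{H} \times (\mathbb{C}^*)^{\bm{\mathcal{D}}}$ we get $G/\varphi^{-1}(\bm{H}) \cong \bm{G}/\bm{H}$, which is precisely the hypothesis needed to invoke Proposition \ref{commute}.

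First I would observe that $\bm{Y}' := \bm{\pi}^{-1}(\bm{Y})$ is a closed subvariety of $G/H$: it is closed as a preimage, and it is irreducible because $\bm{\pi}$ is flat and surjective with irreducible fibers (each isomorphic to $\varphi^{-1}(\bm{H})/H \cong (\mathbb{C}^*)^{\bm{\mathcal{D}}}$), so the pullback of the irreducible variety $\bm{Y}$ remains irreducible. Applying Theorem \ref{V=G} to $\bm{Y}' \subseteq G/H$ gives
\[
\trop_G(\bm{Y}') = \psi\big(\trop_{\bm{\mathbb{T}}}(\bm{Y}')\big),
\]
where $\bm{\mathbb{T}}$ is the dense torus of the toric variety $Z_0$ attached to $G/H$ and $\psi$ its piecewise projection. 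Since $G/H$ has trivial divisor class group, $\psi$ has target $\mathcal{N}_\mathbb{Q}$ (the cocharacter space associated to $G/H$), which is exactly the domain of $\trop(\bm{\pi}) = \bm{\pi}_*: \mathcal{N}_\mathbb{Q} \to \bm{\mathcal{N}}_\mathbb{Q}$.

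Next I would apply Proposition \ref{commute} with $\Phi = \bm{\pi}$ to the subvariety $\bm{Y}'$, obtaining $\trop_{\bm{G}}(\bm{\pi}(\bm{Y}')) = \trop(\bm{\pi})(\trop_G(\bm{Y}'))$. Because $\bm{\pi}$ is surjective, $\bm{\pi}(\bm{Y}') = \bm{Y}$, so combining with the previous display yields
\[
\trop_{\bm{G}}(\bm{Y}) = \trop(\bm{\pi})\big(\psi\big(\trop_{\bm{\mathbb{T}}}(\bm{Y}')\big)\big) = (\trop(\bm{\pi}) \circ \psi)\big(\trop_{\bm{\mathbb{T}}}(\bm{\pi}^{-1}(\bm{Y}))\big),
\]
which is the assertion. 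This argument is essentially the composition of Theorem \ref{V=G} with Proposition \ref{commute}, so I do not anticipate a serious obstacle; the only points that call for a genuine (though brief) argument are the irreducibility of $\bm{\pi}^{-1}(\bm{Y})$ and the bookkeeping check that the domain of $\trop(\bm{\pi})$ matches the target $\mathcal{N}_\mathbb{Q}$ of $\psi$ — which works out precisely because $G/H$ was engineered to have trivial divisor class group.
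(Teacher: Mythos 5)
Your argument is correct and follows exactly the same route as the paper: apply Theorem \ref{V=G} to the closed preimage $\bm{\pi}^{-1}(\bm{Y})$ in the trivial-divisor-class-group space $G/H$, then transport the result along $\bm{\pi}$ via Proposition \ref{commute}. The paper compresses this to a single sentence without spelling out the irreducibility of $\bm{\pi}^{-1}(\bm{Y})$ or the verification that $G/\varphi^{-1}(\bm{H}) \cong \bm{G}/\bm{H}$, both of which you check; those are genuine (if routine) points and your write-up is the more careful one.
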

\begin{proof}
By Theorem \ref{V=G}, this is equivalent to proving $\trop_{\bm{G}}(\bm{Y}) = \trop(\bm{\pi}) \left( \trop_G\left(\bm{\pi}^{-1}(\bm{Y}) \right) \right)$, which follows directly from Proposition \ref{commute}.
\end{proof}

\begin{example}
Suppose $\bm{G} = \Sl_2$, $\bm{H}$ is the diagonal torus, and $\bm{B}$ is the upper triangular matrices. Then $\bm{G}/\bm{H} \cong \mathbb{P}^1 \times \mathbb{P}^1 \setminus \Delta$, where $\Delta$ is the diagonal and $G$ acts on each component on the left by matrix multiplication. The homogeneous space $\mathbb{P}^1 \times \mathbb{P}^1 \setminus \Delta$ has divisor class group $\mathbb{Z}^2$. The palette $\bm{\mathcal{D}}$ consists of two colors: $V(x_{21})$ and $V(x_{22})$.
The associated characters are both defined by $(h_{ij}) \mapsto h_{22}$; call this character $\bm{\chi}$.
The vector space $\bm{\mathcal{N}}_\mathbb{Q}$ associated to $\bm{G}/\bm{H}$ is one-dimensional, $\bm{\mathcal{V}}$ is ray, and both colors lie outside of the valuation cone.

Now $G \cong \bm{G} \times (\mathbb{C}^*)^{\bm{\mathcal{D}}}$ and $H := \set{((h_{ij}),h_{22},h_{22}) : (h_{ij}) \in \bm{H}}$.
Then $G$ acts on $\mathbb{C}^2 \times \mathbb{C}^2 \times \mathbb{C}^*$.
In this action, $\bm{G}$ acts by left matrix multiplication on the copies of $\mathbb{C}^2$ and trivially on $\mathbb{C}^*$ and $(\mathbb{C}^*)^{\bm{\mathcal{D}}}$ respectively acts with weights $-\eta_1$ and $-\eta_2$ on the first and second copy of $\mathbb{C}^2$ and with weight $-\eta_1 - \eta_2$ on $\mathbb{C}^*$.
If the coordinates of the copies of $\mathbb{C}^2$ are respectively given by $S_{11}$, $S_{21}$ and $S_{12}$, $S_{22}$ and the coordinate of $\mathbb{C}^*$ is $T$, then 
\[
G/H = V(S_{11}S_{22} - S_{12}S_{21} - T) \subset \mathbb{C}^2 \times \mathbb{C}^2 \times \mathbb{C}^*.
\]
The map $\bm{\pi}: G/H \rightarrow \bm{G}/\bm{H}$ is then defined by 
\[
((S_{11},S_{21}),(S_{12},S_{22}),T) \mapsto [S_{11} : S_{21}] \times [S_{12} : S_{22}] \subset \mathbb{P}^1 \times \mathbb{P}^1 \setminus \Delta.
\]

Consider $\bm{Y} = \bm{G}/\bm{H}$; we will recover $\bm{\mathcal{V}}$ as $\trop_{\bm{G}}(\bm{G}/\bm{H})$ using Theorem \ref{V=G2}.
The colors of $G/H$ are given by $V(S_{21})$ and $V(S_{22})$ and we may set $f_1 = f_{11} := S_{21}$ and $f_2 = f_{21} := S_{22}$.
Further, $s_1 = s_2 = 2$ and $f_{12} = S_{11}$ and $f_{22} = S_{12}$, so the toric variety $Z_0$ associated to $G/H$ is $(\mathbb{C}^2 \setminus \set{0}) \times (\mathbb{C}^2 \setminus \set{0}) \times \mathbb{C}^*$ with these coordinates.
The pre-image $\bm{\pi}^{-1}(\bm{G}/\bm{H})$ is $G/H = V(S_{11}S_{22} - S_{12}S_{21} - T)$.
The tropicalization $\trop_\mathbb{T}(G/H)$ is the set of points
\[
(a_{11}, a_{12}, a_{21}, a_{22}, b_1) \in \trop_\mathbb{T}(Z_0) \cong \left(\overline{\mathbb{Q}}^2 \setminus \set{\infty} \right) \times \left( \overline{\mathbb{Q}}^2 \setminus \set{\infty} \right) \times \mathbb{Q}
\]
such that the minimum $\min\set{a_{11} + a_{22}, a_{12} + a_{21}, b_1}$
is met at least twice.
It follows that the image of $\trop_\mathbb{T}(G/H)$ under the map $\psi$ is the set of ordered triples $(a_1,a_2,b_1)$ such that $a_1 + a_2 \leq b_1$. 

The $\bm{B}$ semi-invariant rational functions on $\bm{G}/\bm{H}$ are one-dimensional, given by integer powers of $\bm{f} :=\frac{ S_{11}S_{22} - S_{12}S_{21}}{S_{12}S_{22}}$.
The lattice $\bm{\mathcal{N}}$ is spanned by the $\bm{G}$-invariant valuation $\bm{f} \mapsto 1$.
Under $\trop(\bm{\pi})$, an element $(a_1,a_2,b_1) \in \trop_G(G/H)$ is mapped to the valuation $(\bm{f} \mapsto b_1 - a_1 - a_2) \in \trop_{\bm{G}}(\bm{G}/\bm{H})$.
Because $a_1 + a_2 \leq b_1$ on $\psi\left( \trop_G(G/H) \right)$, it follows that this valuation is always non-negative on $\bm{f}$ and so we recover the fact that $\bm{\mathcal{V}} = \trop_{\bm{G}}(\bm{G}/\bm{H})$ is a ray in the positive direction.
\end{example}

\section{Embedding Toroidal Varieties in Toric Varieties}\label{toroidal}

In \cite{Ga}, Gagliardi considers spherical embeddings $X$ of $G/H$ whose associated colored fans only include rays without color. Given such an embedding, he finds an explicit toric variety $Z$ in which $G/H$ embeds such that $\overline{G/H} = X$ in $Z$. He restricts himself to such embeddings because to compute the Cox ring of a spherical embedding, only information about the $G$-stable prime divisors is needed, i.e. rays without color.

In this section we extend his construction to arbitrary toroidal spherical embeddings, whose colored fans contain no colors. We maintain the notation and conventions established in \textsection \ref{homspaces}. In particular, we start in the case where $G/H$ has trivial divisor class group. Let $X$ be a $G/H$-embedding given by a fan $\Sigma$ whose one-dimensional cones are $\set{u_1,\ldots, u_n} \subseteq \mathcal{V}$.
For each colored cone $(\sigma,\emptyset) \in \Sigma$, write $\sigma(1) \subseteq \set{u_1,\ldots, u_n}$ to denote the set of one-dimensional (non-colored) faces of a $\sigma$.
Define
\[
\mathfrak{A} := \set{\mathfrak{a} \subset \set{v_{ij}} : \text{ for each $i$ there is at least one $j$ with $v_{ij} \notin \mathfrak{a}$}}.
\]
We note that $\mathfrak{A}$ is defined slightly differently in \cite{Ga}, where the phrase ``at least one" is replaced by ``exactly one".
For each $\sigma$ and each $\mathfrak{a} \in \mathfrak{A}$, we define 
\[
\sigma_{\mathfrak{a}} := \text{cone}\left(\mathfrak{a} \cup \sigma(1) \right) \subset N_\mathbb{Q}.
\]
The dimension of the cone $\sigma_{\mathfrak{a}}$ is $\dim \sigma_{\mathfrak{a}} = |\mathfrak{a}| + \dim \sigma$.
This follows because for every $i$ at least one $v_{ij}$ is absent from $\mathfrak{a}$, which means the rays of $\sigma$ are linearly independent of the rays in $\mathfrak{a}$.
Then we define the fan $\Sigma_Z$ to be the set of cones $\sigma_{\mathfrak{a}}$ for all $(\sigma,\emptyset) \in \Sigma$ and $\mathfrak{a} \in \mathfrak{A}$:
\[
\Sigma_Z := \set{\sigma_{\mathfrak{a}} : (\sigma,\emptyset) \in \Sigma, \mathfrak{a} \in \mathfrak{A} }.
\]

That $\Sigma_Z$ is well-defined is a corollary of Proposition \ref{fanexists}, which we state and prove later.
Let $Z$ be the toric variety associated to the fan $\Sigma_Z$.
Note that $Z_0 \subseteq Z$ since the fan for $Z_0$ can be obtained from the definition by taking $\sigma$ to be the trivial cone and letting $\mathfrak{a}$ range over $\mathfrak{A}$.
We claim that the closure of $G/H$ in $Z$ is isomorphic to $X$. The argument mimics the procedure in \cite{Ga}; many of the proofs proceed essentially identically.
This claim is proved in Proposition \ref{embedding}; we delay the proof until \textsection \ref{badcolors}, where we simultaneously consider non-toroidal varieties.

We now show that the action of $G$ on $Z_0$ extends to $Z$. 
Define $\hat{N} := N \oplus \mathbb{Z}^n$ where $\mathbb{Z}^n$ is given the basis $\set{e_1,\ldots,e_n}$. 
For each $\sigma_{\mathfrak{a}} \in \Sigma_Z$, define the cone
\[
\hat{\sigma}_{\mathfrak{a}} := \text{cone}\left(\mathfrak{a} \cup \bigcup_{u_k \in \sigma(1)} \set{e_k} \right) \subseteq \hat{N}_\mathbb{Q}
\]
and let 
\[
\Sigma_{\hat{Z}} := \text{fan}\left( \hat{\sigma}_{\mathfrak{a}} : \sigma_{\mathfrak{a}} \in \Sigma_{Z} \right).
\]
In essence, the difference between $\sigma_\mathfrak{a}$ and $\hat{\sigma}_\mathfrak{a}$ is that the rays of $\sigma_\mathfrak{a}$ in $\sigma(1)$ are not necessarily orthogonal.
In $\hat{\sigma}_\mathfrak{a}$, each $u_k$ is replaced with a vector $e_k$ that is orthogonal to every vector in $\mathfrak{a}$ and every other $e_\ell$.

If $\hat{Z}$ is the associated toric variety with torus $\hat{\mathbb{T}} := \mathbb{T} \times (\mathbb{C}^*)^n$, then 
\[
\hat{Z} \cong \mathbb{C}^{s_1 + \cdots + s_r} \times (\mathbb{C}^*)^m \times \mathbb{C}^n \setminus \hat{S},
\]
where $\hat{S}$ is a closed set of codimension at least two.
We revisit this theory in more detail in \textsection \ref{extendedtrop}.
There is a natural toric morphism $p: \hat{Z} \rightarrow Z$ induced by the map $\hat{N} \rightarrow N$ defined by $v_{ij} \mapsto v_{ij}$, $w_k \mapsto w_k$, and $e_\ell \mapsto u_\ell$.
By Theorem 4.1 in \cite{Sw}, $p$ is a good quotient.
In fact, it is a geometric quotient (cf. \cite{AH} Proposition 3.2).
The quotient is with respect to a subtorus $\Gamma \cong (\mathbb{C}^*)^n$ of $\mathbb{T} \times \mathbb{C}^n$, which is parameterized as follows: 
\[
\Gamma := \set{ \left( \prod_{\ell = 1}^n t_\ell^{-\langle u_\ell, v_{11}^* \rangle}, \ldots, \prod_{\ell = 1}^n t_\ell^{-\langle u_\ell, v_{rs_r}^* \rangle},\prod_{\ell = 1}^n t_\ell^{-\langle u_\ell, w_1^* \rangle}, \ldots, \prod_{\ell = 1}^n t_\ell^{-\langle u_\ell, w_k^* \rangle}, t_1, \ldots, t_n \right) : t_\ell \in \mathbb{C}^*}.
\]

We may extend the natural action of $G$ on $\mathbb{C}^{s_1 + \cdots + s_r} \times (\mathbb{C}^*)^m$ to $\hat{Z}$ by having $G$ act trivially on the additional $\mathbb{C}^n$ summand.
Then the action of $G$ on $\hat{Z}$ commutes with the action of $\Gamma$.
As a result, we obtain an action of $G$ on $Z$.
More explicitly, because $p$ is a good quotient, Proposition 5.0.7 of \cite{CLS} says that for any point $x \in X$, the preimage $p^{-1}(x)$ contains a unique closed $\Gamma$-orbit.
If $g \in G$ and $z \in Z$, we define $g \cdot z := p(g \cdot \hat{z})$, where $\hat{z}$ is an element of the unique closed $\Gamma$-orbit in $p^{-1}(z)$.
Because the actions of $G$ and $\Gamma$ commute, $g$ takes the closed orbit of $p^{-1}(z)$ to another $\Gamma$-orbit.
Since $p$ is constant on orbits, $p(g \cdot \hat{z})$ does not depend on the choice of $\hat{z}$.

Further, the inclusion $N \hookrightarrow \hat{N}$ induces a morphism $Z_0 \rightarrow \hat{Z}$ that commutes with $p: \hat{Z} \rightarrow Z$ to give the inclusion $Z_0 \hookrightarrow Z$, so the described action of $G$ on $Z$ extends that of $G$ on $Z_0$.

\begin{example}
We again consider Example \ref{punctured affine in toric} and in particular the embedding of $\mathbb{C}^2 \setminus \set{0}$ in $\mathbb{P}^2 \setminus \set{0}$.
The corresponding colored cone has one ray $u_1$ spanned by $-v_1$ in $\mathcal{N}_\mathbb{Q}$ and $Z = \mathbb{P}^2 \setminus \set{0}$.
The variety $\hat{Z} = \left( \mathbb{C}^2 \times \mathbb{C} \right) \setminus \left(\set{0} \times \mathbb{C}\right) = \left(\mathbb{C}^2 \setminus \set{0}\right) \times \mathbb{C}$ is given by adding one $\mathbb{C}$ summand for the cone $-v_1$ and removing the subvariety where both of the first two coordinates vanish.
The morphism $p: \hat{Z} \rightarrow Z = \mathbb{P}^2 \setminus \set{0}$ is then given by $(x,y,z) \mapsto [x : y : z]$.
In Figure \ref{lattice map}, we illustrate the compatible map of fans between $\Sigma_{\hat{Z}}$ and $\Sigma_Z$ that induces $p$.
The preimage of a point $[a : b : c] \in \mathbb{P}^2 \setminus \set{0}$ is $\set{(at,bt,ct) : a,b,c \in \mathbb{C}, t \in \mathbb{C}^*}$, so $p$ is a geometric quotient by the action of the torus $\Gamma := \set{(t,t,t) : t \in \mathbb{C}^*}$. 

\begin{center}
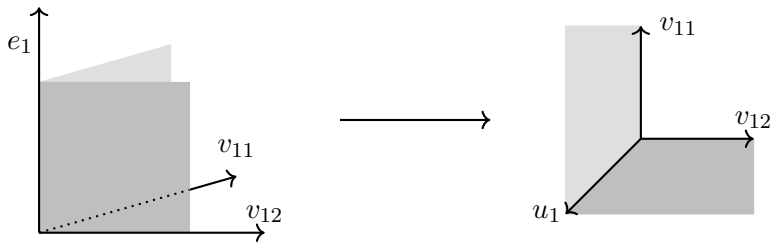
\begin{figure}[H]
\begin{tikzpicture}
\draw[rectangle,gray!25, fill = gray!25] (0,0)--(0,2)--(1.75,2.5)--(1.75,.5);
\draw[thick,->] (0,0)--(2.625,.75);
\draw[rectangle,gray!50, fill = gray!50] (0,0)--(0,2)--(2,2)--(2,0);
\draw[thick,->] (0,0)--(0,3);
\draw[thick,->] (0,0)--(3,0);
\draw[thick,dotted] (0,0)--(2,.57142857);

\draw[thick,->] (4,1.5)--(6,1.5);

\draw[rectangle,gray!50,fill = gray!50] (8,1.25)--(9.5,1.25)--(9.5,.25)--(7,.25);
\draw[rectangle,gray!25,fill = gray!25] (8,1.25)--(8,2.75)--(7,2.75)--(7,.25);
\draw[thick,->] (8,1.25)--(8,2.75);
\draw[thick,->] (8,1.25)--(9.5,1.25);
\draw[thick,->] (8,1.25)--(7,.25);

\draw (-.25,2.5) node[]{$e_1$};
\draw (3,.25) node[]{$v_{12}$};
\draw (2.625,1.125) node[]{$v_{11}$};

\draw (6.75,.25) node[]{$u_1$};
\draw (8.5,2.75) node[]{$v_{11}$};
\draw (9.5,1.5) node[]{$v_{12}$};
\end{tikzpicture}
\caption{The compatible map of fans induced by $\hat{N}_\mathbb{Q} \rightarrow N_\mathbb{Q}$ for the embedding $\mathbb{P}^2 \setminus \set{0}$.}
\label{lattice map}
\end{figure}
\end{center}

If instead we consider $\Bl_0(\mathbb{C}^2) \subset \mathbb{C}^2 \times \mathbb{P}^1$ with one ray spanned by $v_1$, we get the same $\hat{Z}$ and the map $\hat{Z} \rightarrow Z = \Bl_0(\mathbb{C}^2)$ is given by $(x,y,z) \mapsto (xz,yz) \times [x : y]$.
The preimage of a point $(ac,bc) \times [a : b] \in \Bl_0(\mathbb{C}^2)$ is $\set{(at^{-1},bt^{-1},ct) : t \in \mathbb{C}^*}$, so it is a geometric quotient by the torus $\Gamma := \set{(t^{-1},t^{-1},t) : t \in \mathbb{C}^*}$.
\end{example}

We have not yet discussed the case of a $\bm{G}/\bm{H}$-embedding $\bm{X}$ where the homogeneous space $\bm{G}/\bm{H}$ has nontrivial divisor class group.
Given such a homogeneous space, we earlier defined an associated homogeneous space $G/H$ with trivial divisor class group along with a map $\bm{\pi}: G/H \rightarrow \bm{G}/\bm{H}$. Gagliardi shows (\cite{Ga}, Proposition 3.4) that the pushforward $\bm{\pi}_*: \mathcal{N}_\mathbb{Q} \rightarrow \bm{\mathcal{N}}_\mathbb{Q}$ is an isomorphism when restricted to the subspace $(\mathcal{N}_T)_\mathbb{Q} := \thespan\set{w_1,\ldots,w_m}$.

If $\bm{\Sigma}$ is the colored fan associated to a toroidal embedding $\bm{X}$, we define a $G/H$-embedding $X$ by considering the fan $\Sigma$, which is the preimage of $\bm{\Sigma}$ under $\bm{\pi}_*|_{\left(\mathcal{N}_T\right)_\mathbb{Q}}$.
Let $Z$ be the toric variety associated to $X$ as constructed earlier in this section.

There is a good geometric quotient of $Z$ by the torus $\left(\mathbb{C}^*\right)^{\bm{\mathcal{D}}}$, so we may extend $\bm{\pi}$ to $\bm{\pi}: Z \rightarrow \bm{Z}$, where $\bm{Z}$ is a toric variety.
Refer again to the theory in \cite{Sw}, as well as \cite{AH}, Proposition 3.2.
By Proposition \ref{embedding}, the spherical variety $X$ associated to the fan $\Sigma$ is the closure of $G/H$ in $Z$.
Then $\bm{\pi}$ takes $X$ to the closure of $\bm{G}/\bm{H}$ in $\bm{Z}$ (cf. Definition 5.0.5 and Theorem 5.0.6 of \cite{CLS}). 
By the construction of the colored fan, the image of $X$ under $\bm{\pi}$ is $\bm{X}$ and hence $\bm{X}$ is the closure of $\bm{G}/\bm{H}$ in $\bm{Z}$.
We collect these facts later in Proposition \ref{nontriv} where we also consider the possibility of colors.

\section{The Problem of Colors}\label{badcolors}

The methods of \textsection \ref{toroidal} are illustrations of Proposition 2.11 of \cite{HK}, which says that any Mori dream space can be embedded in a projective toric variety. Projective spherical varieties are Mori dream spaces, but not every spherical embedding is a Mori dream space. This issue is addressed in \cite{Ga2}, another paper of Gagliardi.

\begin{definition}\label{A2}
A normal variety $X$ is said to have the \emph{$A_2$-property} if any two points in $X$ lie in some affine open subset of $X$.
\end{definition}

\begin{definition}
A colored fan $\Sigma$ is \emph{polyhedral} if the relative interiors of any two colored cones in $\Sigma$ have empty intersection. 
\end{definition}

\begin{theorem}[\cite{Ga2}, Theorem 1.5]\label{A2poly} Let $\Sigma$ be a colored fan with associated spherical embedding $G/H \hookrightarrow X$. Then $X$ has the $A_2$-property if and only if $\Sigma$ is polyhedral.
\end{theorem}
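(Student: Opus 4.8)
The plan is to prove the two implications separately, routing everything through toric embeddings as in the rest of the paper; the governing slogan is that a genuine fan always produces an $A_2$ variety, so polyhedrality should be exactly the condition making a colored fan ``genuine enough.'' Throughout I would use the basic dictionary: the $G$-orbits of $X$ correspond to the colored cones of $\Sigma$, two colored cones have disjoint relative interiors automatically once both meet $\mathcal{V}$ in their interiors (axiom (ii) of a colored fan), so any failure of polyhedrality happens strictly outside $\mathcal{V}$ and therefore genuinely involves colors.

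For the direction ``$\Sigma$ polyhedral $\implies X$ has the $A_2$-property'' I would invoke the construction of \textsection\ref{toroidal}--\textsection\ref{badcolors} directly: polyhedrality is precisely the hypothesis under which the colored version of Proposition \ref{embedding} realizes $X = \overline{G/H}$ as a closed $G$-stable subvariety of a toric variety $Z$. A toric variety has the $A_2$-property (immediately when it is quasi-projective, and in general by W{\l}odarczyk's theorem identifying $A_2$ varieties with subvarieties of toric varieties), and the $A_2$-property passes to locally closed subvarieties: given an affine open $U$ of $Z$ containing two points $x_1, x_2$ of $X$, prime avoidance produces $f \in \mathbb{C}[U]$ vanishing on $U \setminus X$ with $f(x_i) \ne 0$, so $D(f) \subseteq U \cap X$ is an affine open of $X$ containing both. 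Hence $X$ has $A_2$.

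For the converse I would argue contrapositively using a toric embedding supplied by W{\l}odarczyk: if $X$ had $A_2$, then $X$ would be a closed subvariety of a toric variety $Z$, and after replacing $Z$ by a torus-orbit closure if necessary we may assume $X \cap T_Z$ is dense in $X$. Each $G$-orbit $\mathcal{O} \subseteq X$ then meets a unique minimal torus-orbit of $Z$, and via the orbit-by-orbit description of spherical tropicalization recalled in \textsection\ref{homspaces} together with the projection $\psi$ and Theorems \ref{GagVal} and \ref{V=G}, the colored cone of $\mathcal{O}$ is recovered from the corresponding cone of the honest (hence polyhedral) fan of $Z$; two colored cones of $\Sigma$ with overlapping relative interiors would then force two cones of the fan of $Z$ with overlapping relative interiors, which is impossible. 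The step I expect to be the main obstacle is exactly this last passage --- ``$X$ is a closed subvariety of a toric variety'' $\implies$ ``the colored fan of $X$ is polyhedral.'' W{\l}odarczyk's toric variety carries no $G$-action a priori, so one cannot simply quote the $G$-equivariant embedding $G/H \hookrightarrow Z_0$ of \textsection\ref{homspaces}; the real work is in matching the $G$-orbit stratification of $X$ against the torus-orbit stratification of the ambient toric variety and tracking how the colors of $G/H$ --- which are not torus-invariant --- get encoded. All of the genuine difficulty of the theorem is concentrated in this color bookkeeping, since the non-colored part of a colored fan is automatically polyhedral.
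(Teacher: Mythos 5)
This theorem is quoted from Gagliardi's paper \cite{Ga2} and is not proved in the present paper, so there is no internal proof to compare against; you are effectively being asked to reprove a cited result.

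Your argument for ``polyhedral $\implies$ $A_2$'' is essentially sound and in fact matches how the present paper uses the cited theorem: Propositions \ref{fanexists}, \ref{embedding}, and \ref{nontriv} (none of which invoke Theorem \ref{A2poly}, so there is no circularity) produce a closed embedding $X \hookrightarrow Z$ into a toric variety when $\Sigma$ is polyhedral, and toric varieties have $A_2$. One correction: since that embedding is \emph{closed}, prime avoidance is unnecessary --- given two points of $X$, take an affine open $U \subseteq Z$ containing both; then $U \cap X$ is closed in $U$, hence already affine. Your prime-avoidance phrasing as written is also technically off even in the locally closed case: one should produce $f \in \mathbb{C}[U \cap \overline{X}]$ vanishing on the \emph{closed} set $(U \cap \overline{X}) \setminus X$, not $f \in \mathbb{C}[U]$ vanishing on $U \setminus X$, which is typically not closed in $U$ and may even be dense, forcing $f = 0$. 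This is moot here, but worth fixing.

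The genuine gap is exactly the one you flag: ``$A_2 \implies$ polyhedral.'' W\l odarczyk's theorem furnishes some toric embedding $X \hookrightarrow Z'$, but with no compatibility between the $G$-action on $X$ and the torus action on $Z'$ there is no a priori correspondence between $G$-orbits of $X$ (hence colored cones of $\Sigma$) and torus orbits of $Z'$. Your sketch assumes one can ``match the $G$-orbit stratification against the torus-orbit stratification'' and pull polyhedrality back through $\psi$, but you supply no mechanism for this, and that is precisely where the content of the theorem lives. In particular, Theorems \ref{GagVal} and \ref{V=G} only apply to the \emph{specific} $G$-equivariant toric embedding built from the colors, not to an arbitrary W\l odarczyk embedding, so they cannot be invoked here. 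The standard route for this direction is spherical-intrinsic: characterize, in terms of colored cones, when two $G$-orbits of a simple $G/H$-embedding admit a common $B$-stable affine open, and show that a failure of polyhedrality produces a pair of points with no common affine neighborhood. As it stands, the converse direction of your proposal is an unproved assertion.
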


Finally, we make use of the following theorem of W\l odarczyk, which Gagliardi cites in \cite{Ga2}:

\begin{theorem}[\cite{Wl}]
A normal variety $X$ has the $A_2$-property if and only if it admits a closed embedding $X \hookrightarrow Z$ into a toric variety $Z$.
\end{theorem}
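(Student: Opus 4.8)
The plan is to prove the two implications separately; essentially all of the content lies in building a toric embedding out of the $A_2$-property.

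For the direction that a toric embedding forces the $A_2$-property, suppose $\iota\colon X \hookrightarrow Z$ is a closed embedding with $Z$ toric. Recall that a toric variety has the $A_2$-property, so given $p,q \in X$ there is an affine open $W \subseteq Z$ containing $\iota(p)$ and $\iota(q)$; then $\iota^{-1}(W)$ is a closed subscheme of the affine scheme $W$, hence affine, and it is an open subset of $X$ containing both $p$ and $q$. Thus $X$ has the $A_2$-property. (If one is uneasy about toric varieties that are not quasi-projective, one checks the $A_2$-property for them directly from the fan; alternatively, only the toric varieties produced in the other direction are actually needed for the equivalence as it is used here.)

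For the converse, assume $X$ is normal with the $A_2$-property. The first step is to choose a finite affine open cover $X = U_1 \cup \cdots \cup U_n$ with $U_i = \spec A_i$, arranged so that for every pair $i,j$ the overlap $U_i \cap U_j$ is a principal open subset of \emph{both} charts, say $U_i \cap U_j = (U_i)_{f_{ij}} = (U_j)_{f_{ji}}$ for some $f_{ij} \in A_i$ and $f_{ji} \in A_j$; this is obtained from an arbitrary affine cover by using the $A_2$-property to cover each pairwise intersection by sets that are affine in both neighboring charts, and then refining. One also records compatibly chosen finite generating sets for the $A_i$, including the functions $f_{ij}$ and the inverses $f_{ji}^{-1}$. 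Each generating set then realizes $U_i$ as a closed subvariety of an affine toric variety $Z_i$, and the principal structure of the overlaps identifies the localizations $A_i[f_{ij}^{-1}] \cong A_j[f_{ji}^{-1}]$; one arranges the generators so that this identification is induced by a monomial isomorphism between the corresponding localizations of $\mathbb{C}[Z_i]$ and $\mathbb{C}[Z_j]$. Gluing the $Z_i$ along these monomial isomorphisms produces a toric variety $Z$ together with a compatible closed embedding $X = \bigcup_i U_i \hookrightarrow Z$.

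The main obstacle is this second step: arranging all pairwise overlaps to be principal in both charts while simultaneously choosing the generators of the $A_i$ so that the transition isomorphisms become monomial. The $A_2$-property is precisely the input that makes this possible; without it one cannot in general reduce to principal overlaps, and the glued object fails to be toric. A secondary subtlety is separatedness of $Z$: the gluing a priori only yields a toric prevariety, and some positivity of $X$ (for instance quasi-projectivity, or being divisorial) is needed to ensure the result is a genuine toric variety — a hypothesis that is available in the spherical-variety setting where this theorem is invoked.
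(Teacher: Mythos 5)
This statement is not proved in the paper; it is quoted from W\l odarczyk \cite{Wl} (as the paper itself says, ``we make use of the following theorem of W\l odarczyk''), so there is no in-paper argument to compare against. Judged on its own terms, your proposal is a reasonable high-level outline of W\l odarczyk's strategy but it does not constitute a proof, and it contains one substantive misreading of the theorem.

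On the easy direction, the reduction is fine: a closed subscheme of an affine scheme is affine, so the $A_2$-property descends along closed embeddings. But the step you wave at --- that every toric variety has the $A_2$-property --- is the actual content, and it is not immediate: a general toric variety need not be quasi-projective, so one cannot simply invoke the quasi-projective case. One has to argue from the fan (for instance, given $x\in U_\sigma$ and $y\in U_\tau$, one must produce a single affine open containing both; this uses the separatedness of the fan in an essential way). Your parenthetical ``one checks the $A_2$-property for them directly from the fan'' is where the work is.

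On the hard direction, you correctly identify the two pressure points --- arranging a finite affine cover whose pairwise overlaps are principal in both charts, and choosing generators so that the resulting transition isomorphisms are monomial --- but you explicitly stop there and describe them as ``the main obstacle.'' That is an honest assessment of why W\l odarczyk's proof is nontrivial, but it leaves the proof open; nothing in your write-up shows that the $A_2$-property actually delivers such a cover, or that the generators can be chosen compatibly across all charts at once.

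The more serious issue is your final paragraph. You assert that the gluing ``a priori only yields a toric prevariety'' and that ``some positivity of $X$ (for instance quasi-projectivity, or being divisorial) is needed to ensure the result is a genuine toric variety.'' This is not a secondary subtlety to be swept under the rug by appealing to the application --- it is a misstatement of the theorem. W\l odarczyk's result, exactly as quoted, says that normality together with the $A_2$-property is both necessary and sufficient for a closed embedding into a (separated) toric variety; no quasi-projectivity or divisorial hypothesis is needed. Establishing that the $A_2$-property of $X$ alone forces the glued toric prevariety to be separated is precisely the heart of W\l odarczyk's argument. If your construction genuinely required an extra positivity hypothesis, you would have proved a strictly weaker statement than the one cited, and it would not serve the paper's purpose, since Theorem~\ref{A2poly} and the surrounding discussion use the $A_2$-property as the sole criterion.
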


All of this serves to tell us that the spherical varieties that cannot be embedded in a toric variety are those whose cones are not polyhedral. Intuitively, this is reflected by the fact that a spherical embedding with a non-polyhedral colored fan will be associated by the process described in \textsection \ref{toroidal} to a toric variety with a non-polyhedral fan, which is an impossibility.
A colored fan can only exhibit non-polyhedral behavior outside of the valuation cone.
We illustrate this in the following extended example.

\begin{example}\label{colors_outside_val}
Let $G = \Sl_3$ and $H = \Sl_{2}$ embedded in $G$ as the lower right entries.
Then $G$ has an action on $\mathbb{C}^3 \times \mathbb{C}^3$ given by 
\[
g \cdot (x,y) = \left( gx, \left( g^{-1} \right)^*y \right),
\]
where $*$ indicates taking the conjugate transpose.
Under this action, the point $((1,0,0),(1,0,0))$ has isotropy group $H$.
If the coordinates of $\mathbb{C}^3 \times \mathbb{C}^3$ are given as $((x_1,x_2,x_3),(y_1,y_2,y_3))$, then the orbit of this point is $V(x_1y_1 + x_2y_2 + x_3y_3 - 1) = G/H$.
Taking the Borel group $B$ consisting of the upper triangular matrices, $G/H$ is a spherical homogeneous space.

There are two colors: $V(x_3)$ and $V(y_1)$, and $\Gamma(G/H,\mathcal{O}_{G/H}^*)$ is trivial.
Thus, $\mathcal{N}$ is two-dimensional, spanned by valuations $v_1$ and $v_2$ respectively associated to the colors $V(x_3)$ and $V(y_1)$.
The $G$-modules $G\cdot x_3$ and $G \cdot y_1$ both have rank three and we may define an embedding as follows:
\begin{align*}
V(x_1y_1 + x_2y_2 + x_3y_3 - 1) = G/H & \hookrightarrow Z_0 := \left( \mathbb{C}^3 \setminus \set{0} \right) \times \left(\mathbb{C}^3 \setminus \set{0} \right) \\
((x_1,x_2,x_3),(y_1,y_2,y_3)) & \mapsto (x_3,x_2,x_1,y_1,y_2,y_3)
\end{align*}
The tropicalization $\trop_\mathbb{T}(G/H)$ of $G/H$ in $Z_0$ is the set of extended valuations $\mu$ where the minimum
\[
\min\set{\mu(x_1) + \mu(y_1),\mu(x_2) + \mu(y_2),\mu(x_3) + \mu(y_3),0}
\]
is met at least twice.
By Theorem \ref{GagVal}, $\mathcal{V} = \trop_\mathbb{T}(G/H) \cap \mathcal{N}_\mathbb{Q}$.
On $\mathcal{N}_\mathbb{Q}$, $\mu(x_1) = \mu(x_2) = \mu(x_3)$ and $\mu(y_1) = \mu(y_2) = \mu(y_3)$, and so for the above minimum to be met twice, we must have $\mu(x_3) + \mu(y_1) \leq 0$. Thus, $\mathcal{V} = \set{v_1 + v_2 \leq 0}$. Figure \ref{valcone} shows the valuation cone and palette of this homogeneous space.
\begin{center}
\begin{figure}[H]
\begin{tikzpicture}
\draw[fill,gray!25] (2,-2)--(-2,-2)--(-2,2);
\draw[thick,->] (0,0)--(2,-2);
\draw[fill] (0,0) circle(.05);
\draw[thick,->] (0,0)--(-2,2);

\draw[->,dotted] (0,0)--(2,0);
\draw[->,dotted] (0,0)--(-2,0);
\draw[->,dotted] (0,0)--(0,2);
\draw[->,dotted] (0,0)--(0,-2);

\draw[red,fill=red] (1,0) circle(.05);
\draw[red] (1,0) circle(.1);

\draw[blue,fill=blue] (0,1) circle(.05);
\draw[blue] (0,1) circle(.1);

\draw (2.3,0) node {$v_1$};
\draw (0,2.15) node {$v_2$};

\draw (1.3,.35) node {$\rho(V(x_3))$};
\draw (.85,1.35) node {$\rho(V(y_1))$};
\end{tikzpicture}
\caption{The valuation cone and palette of $V(x_1y_1 + x_2y_2 + x_3y_3 - 1)$.}
\label{valcone}
\end{figure}
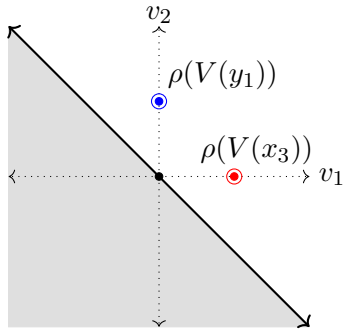
\end{center}
The interest here is that there are two colors that lie outside the valuation cone, so there is potential for valid colored cones that are not polyhedral.
Indeed, we may have a simple $G/H$-embedding corresponding to the colored cone spanned by the color $\rho(V(x_3))$ and $-2v_1 + v_2$ since the interior of this cone intersects the valuation cone. This is illustrated by the red cone in Figure \ref{redbluepurplecone}. Similarly, there exists an embedding whose colored cone is spanned by $\rho(V(y_1))$ and $v_1 - 2v_2$, the blue cone in Figure \ref{redbluepurplecone}.
The union of these cones is a valid colored fan and thus corresponds to a $G/H$-embedding.
Even though this fan is not polyhedral, it is a colored fan because the intersection is outside the valuation cone.
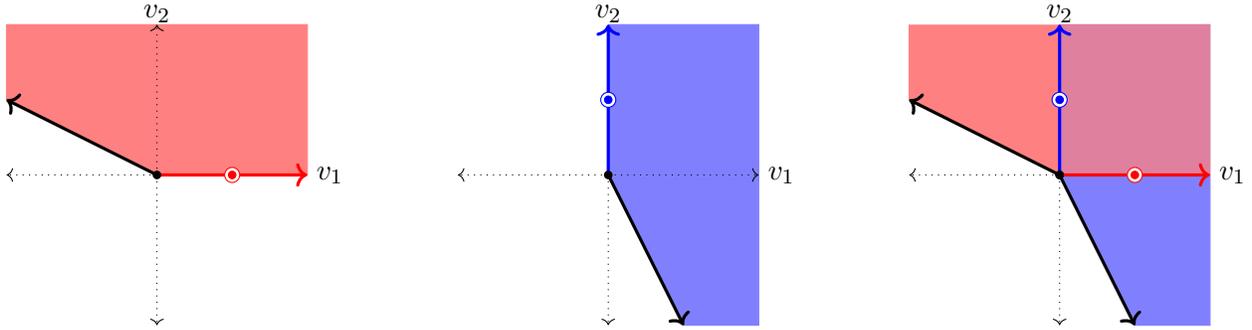
\begin{figure}[h]
\begin{center}
\begin{tikzpicture}

%red cone

\draw[fill,red!50] (2,0)--(2,2)--(-2,2)--(-2,1)--(0,0);
\draw[very thick,red,->] (0,0)--(2,0);
\draw[fill] (0,0) circle(.05);
\draw[very thick,->] (0,0)--(-2,1);

%\draw[->,dotted] (0,0)--(2,0);
\draw[->,dotted] (0,0)--(-2,0);
\draw[->,dotted] (0,0)--(0,2);
\draw[->,dotted] (0,0)--(0,-2);

\draw[red, fill= white] (1,0) circle(.1);
\draw[red,fill=red] (1,0) circle(.05);

\draw (2.3,0) node {$v_1$};
\draw (0,2.15) node {$v_2$};

%blue cone

\draw[fill,blue!50] (6,2)--(8,2)--(8,-2)--(7,-2)--(6,0);
\draw[very thick,blue,->] (6,0)--(6,2);
\draw[fill] (6,0) circle(.05);
\draw[very thick,->] (6,0)--(7,-2);

\draw[->,dotted] (6,0)--(8,0);
\draw[->,dotted] (6,0)--(4,0);
%\draw[->,dotted] (6,0)--(6,2);
\draw[->,dotted] (6,0)--(6,-2);

\draw[blue, fill= white] (6,1) circle(.1);
\draw[blue,fill=blue] (6,1) circle(.05);

\draw (8.3,0) node {$v_1$};
\draw (6,2.15) node {$v_2$};

%purple cone

\draw[fill,blue!50] (14,0)--(14,-2)--(13,-2)--(12,0);
\draw[fill,red!50] (10,2)--(10,1)--(12,0)--(12,2);
\draw[fill,purple!50] (12,0)--(14,0)--(14,2)--(12,2);

\draw[very thick,blue,->] (12,0)--(12,2);
\draw[fill] (12,0) circle(.05);
\draw[very thick,->] (12,0)--(13,-2);
\draw[very thick,red,->] (12,0)--(14,0);
\draw[fill] (12,0) circle(.05);
\draw[very thick,->] (12,0)--(10,1);

%\draw[->,dotted] (12,0)--(14,0);
\draw[->,dotted] (12,0)--(10,0);
%\draw[->,dotted] (12,0)--(12,2);
\draw[->,dotted] (12,0)--(12,-2);

\draw[blue, fill= white] (12,1) circle(.1);
\draw[blue,fill=blue] (12,1) circle(.05);

\draw[red, fill= white] (13,0) circle(.1);
\draw[red,fill=red] (13,0) circle(.05);

\draw (14.3,0) node {$v_1$};
\draw (12,2.15) node {$v_2$};
\end{tikzpicture}
\caption{Colored cones corresponding to three spherical embeddings. The embedding on the far right is the gluing of the other two along $G/H$; it has two maximal colored cones that overlap in the first quadrant.}
\label{redbluepurplecone}
\end{center}
\end{figure}
\end{example}

Throughout this section we therefore only consider colored fans that are polyhedral.
As before, we assume for the moment that $G/H$ has trivial divisor class group.
Let $X$ be a $G/H$-embedding with colored fan $\Sigma$ and let $Z_0$ be the toric variety associated to $G/H$ defined in \textsection \ref{toroidal}.
Let $(\sigma,\mathcal{F}) \in \Sigma$ be a colored cone. Then write $\sigma(1) := \set{u_1,\ldots,u_n}$ for the set of one-dimensional non-colored faces of $\sigma$ and define
\[
\mathfrak{A}(\mathcal{F}) := \set{\mathfrak{a} \subseteq \set{v_{ij}} : \text{for each } i \text{ such that } D_i \notin \mathcal{F}, \text{ there is at least one $j$ with } v_{ij} \notin \mathfrak{a} }.
\]
Note that $\mathfrak{A}(\emptyset)$ is the same as $\mathfrak{A}$, as defined at the beginning of \textsection \ref{toroidal}.
The set $\mathfrak{A}(\mathcal{F})$ simply extends $\mathfrak{A}$ by allowing the entire set $\set{v_{ij}}_j$ to be present when the color $D_i$ lies in $\mathcal{F}$.
For a given $\mathfrak{a} \in \mathfrak{A}(\mathcal{F})$, we define a cone in $N_\mathbb{Q}$ as follows:
\[
\sigma_{\mathfrak{a}} := \text{cone}\left(\mathfrak{a} \cup \sigma(1) \right) \subset N_\mathbb{Q}.
\]
The fan $\Sigma_Z$ is defined similarly to before:
\[
\Sigma_Z := \text{fan}\left( \set{\sigma_{\mathfrak{a}} : (\sigma,\mathcal{F}) \in \Sigma, \mathfrak{a} \in \mathfrak{A}(\mathcal{F})} \right).
\]
If $\Sigma$ has no colors, then $\Sigma_Z$ is consistent with the object we described in \textsection \ref{toroidal}.
By a similar argument to the toroidal case, we can see that the action of $G$ on $Z_0$ also extends to $Z$ in this setting.

\begin{proposition}\label{fanexists}
If $X$ is a $G/H$-embedding whose associated colored fan $\Sigma$ is polyhedral, then the fan $\Sigma_Z$ is well-defined.
\end{proposition}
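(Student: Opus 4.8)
The plan is to verify that $\Sigma_Z$ satisfies the defining properties of a rational polyhedral fan: every $\sigma_{\mathfrak{a}}$ is a strictly convex rational polyhedral cone, $\Sigma_Z$ is closed under passing to faces, and the intersection of any two of its cones is a common face of each. Rationality is clear, since $\sigma_{\mathfrak{a}} = \operatorname{cone}(\mathfrak{a}\cup\sigma(1))$ is generated by lattice points. For strict convexity I would produce a linear functional on $N_{\mathbb{Q}}$ positive on all of these generators away from the origin. Since $(\sigma,\mathcal{F})$ is strictly convex there is $\phi\in\mathcal{M}_{\mathbb{Q}}$ with $\phi>0$ on $\sigma\setminus\{0\}$, so $\phi(\rho(D_i))>0$ whenever $D_i\in\mathcal{F}$. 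Lifting $\phi$ along the surjection $M\twoheadrightarrow\mathcal{M}$ dual to the inclusion $\mathcal{N}\hookrightarrow N$ (which sends $v_{ij}^{*}\mapsto v_i^{*}$, $w_k^{*}\mapsto w_k^{*}$), I can arrange $\langle\tilde\phi,v_{ij}\rangle>0$ for every $v_{ij}\in\mathfrak{a}$: for an index $i$ with $D_i\in\mathcal{F}$ one distributes the positive number $\phi(\rho(D_i))=\sum_j\langle\tilde\phi,v_{ij}\rangle$ as strictly positive values, while for an index $i$ with $D_i\notin\mathcal{F}$ one parks the (possibly nonpositive) value $\phi(v_i)$ on a coordinate $v_{ij}$ absent from $\mathfrak{a}$ — which exists by the definition of $\mathfrak{A}(\mathcal{F})$ — and takes positive values on the remaining $v_{ij}$. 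Then $\tilde\phi$ restricts to $\phi$ on $\mathcal{N}_{\mathbb{Q}}$, hence is positive on $\sigma(1)\setminus\{0\}$, and is positive on $\mathfrak{a}$, so it is positive on $\sigma_{\mathfrak{a}}\setminus\{0\}$.

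Next I would establish closure under faces by showing that every face of $\sigma_{\mathfrak{a}}$ is again of the form $\tau_{\mathfrak{b}}$ for a face $\tau\preceq\sigma$ — with the colored face $(\tau,\mathcal{F}\cap\rho^{-1}(\tau))$ of $(\sigma,\mathcal{F})$, which lies in $\Sigma$ — and a subset $\mathfrak{b}\subseteq\mathfrak{a}$ lying in $\mathfrak{A}(\mathcal{F}\cap\rho^{-1}(\tau))$. Given a supporting functional $\tilde\phi\in\sigma_{\mathfrak{a}}^{\vee}$ cutting out the face, the generators from $\mathfrak{a}$ it annihilates form $\mathfrak{b}$ and the generators from $\sigma(1)$ it annihilates cut out $\tau$; compatibility of $\tilde\phi$ with the duality $\mathcal{N}\hookrightarrow N$, $M\twoheadrightarrow\mathcal{M}$ is what forces, over a colored index, the surviving coordinate directions to be either all kept or all dropped, so $\mathfrak{b}$ indeed belongs to $\mathfrak{A}$ of the face's color set. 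Conversely, for $\mathfrak{b}\subseteq\mathfrak{a}$ each such $\tau_{\mathfrak{b}}$ is visibly a face of $\sigma_{\mathfrak{a}}$. This also handles one half of the intersection property, so it remains to show that two cones of $\Sigma_Z$ meet along a common face — equivalently, given closure under faces, that distinct cones of $\Sigma_Z$ have disjoint relative interiors.

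The hard part is exactly this last step, and it is where the hypothesis that $\Sigma$ is polyhedral (rather than merely a colored fan, cf.\ Theorem \ref{A2poly} and Example \ref{colors_outside_val}) is indispensable. Suppose $x$ lies in the relative interiors of $\sigma_{\mathfrak{a}}$ and $\sigma'_{\mathfrak{a}'}$. Reading off the coordinates of $x$ in the basis dual to $\{v_{ij}^{*},w_k^{*}\}$, the set of directions $v_{ij}$ with $x_{ij}>\min_{j'}x_{ij'}$ recovers $\mathfrak{a}\cap\{v_{ij}\}_j$ exactly on each uncolored factor and returns a proper subset of $\{v_{ij}\}_j$ on each colored factor; meanwhile the piecewise-linear projection $\psi$ introduced in \textsection\ref{homspaces} (extended to $N_{\mathbb{Q}}$ by the same minimum-of-coordinates formula) sends $\sigma_{\mathfrak{a}}$ into $\sigma$ and carries $x$ into $\sigma\cap\sigma'$. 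Polyhedrality of $\Sigma$ forbids $\sigma$ and $\sigma'$ from sharing an interior point in $\mathcal{N}_{\mathbb{Q}}$ unless $\sigma=\sigma'$, hence $(\sigma,\mathcal{F})=(\sigma',\mathcal{F}')$; a coordinate bookkeeping then forces $\mathfrak{a}=\mathfrak{a}'$ and so $\sigma_{\mathfrak{a}}=\sigma'_{\mathfrak{a}'}$. I expect the genuinely delicate case of this bookkeeping to be a colored factor $i$ on which one of $\mathfrak{a},\mathfrak{a}'$ is the full orthant: there one must use that a genuine uncolored ray of $\sigma$ is never contained in the span of that orthant — otherwise it would agree up to scaling with $\rho(D_i)$, contradicting either that the ray is uncolored or the strict convexity of $(\sigma,\mathcal{F})$ — so that the two cones cannot share a relative-interior point without coinciding. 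Assembling the three steps shows $\Sigma_Z$ is a well-defined fan, the toroidal construction of \textsection\ref{toroidal} being the special case $\mathcal{F}=\emptyset$.
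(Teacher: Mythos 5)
Your outline checks the same three fan axioms, but on the crucial intersection step you take a genuinely different route from the paper. The paper's proof is a direct computation: it invokes polyhedrality to say $\sigma\cap\sigma'$ is a colored cone $(\sigma'',\mathcal{F}'')\in\Sigma$ that is a colored face of both, and then asserts $\sigma_{\mathfrak{a}}\cap\sigma'_{\mathfrak{a}'}=\sigma''_{\mathfrak{a}\cap\mathfrak{a}'}$, which is a common face by the face-closure step (note $\mathfrak{a}\cap\mathfrak{a}'\in\mathfrak{A}(\mathcal{F}'')$: if $D_i\notin\mathcal{F}''$ then, since $\rho(D_i)\in\sigma\cap\sigma'=\sigma''$ would force $D_i\in\mathcal{F}''$, we must have $D_i\notin\mathcal{F}$ or $D_i\notin\mathcal{F}'$, and either gives an index $j$ with $v_{ij}\notin\mathfrak{a}\cap\mathfrak{a}'$). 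You instead argue via disjointness of relative interiors, pushing forward by $\psi$ and using polyhedrality in $\mathcal{N}_{\mathbb{Q}}$. That is a legitimate strategy, and your observation that $\psi$ carries $\sigma_{\mathfrak{a}}$ into $\sigma$ and relative interiors into relative interiors is correct, but it trades a one-line cone identity for a harder uniqueness problem.

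That uniqueness problem is where you have a real gap, and you half-acknowledge it. Once $\sigma=\sigma'$ and $\mathcal{F}=\mathcal{F}'$, you still need $x\in\operatorname{relint}(\sigma_{\mathfrak{a}})\cap\operatorname{relint}(\sigma_{\mathfrak{a}'})$ to force $\sigma_{\mathfrak{a}}=\sigma_{\mathfrak{a}'}$. Your ``read off $\mathfrak{a}$ from the coordinates of $x$'' works on uncolored indices $i$ (because some $\lambda_{ij}$ vanishes, so $\min_j x_{ij}$ isolates the $\sigma(1)$-contribution), but on a colored index with $\mathfrak{a}\supseteq\{v_{ij}\}_j$ and $\mathfrak{a}'\cap\{v_{ij}\}_j$ proper, the representations are genuinely non-unique: the coefficients $\mu_\ell$ on the $u_\ell$'s can shift between the two representations of the same $x$, so the spanning-argument heuristic you give (``an uncolored ray is never in the span of a colored orthant'') does not by itself pin down $\mathfrak{a}$. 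This is not an unfixable hole, but as written it is a claim without a proof, and it is exactly the step the paper's direct computation of $\sigma_{\mathfrak{a}}\cap\sigma'_{\mathfrak{a}'}$ avoids. If you want to keep your approach, you would need to prove that the collection $\{\sigma_{\mathfrak{a}}:\mathfrak{a}\in\mathfrak{A}(\mathcal{F})\}$ over a fixed $(\sigma,\mathcal{F})$ already has pairwise disjoint relative interiors, which amounts to the same cone identity the paper uses. The extra material you include (rationality, strict convexity via a lifted functional) is fine but not part of the paper's argument, which takes those for granted.
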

\begin{proof}
We must check that $\Sigma_Z$ is closed under taking faces and that the intersection of any two cones in $\Sigma_Z$ is a face of each.
If $\sigma_{\mathfrak{a}}$ is a cone in $\Sigma_Z$, then its faces are precisely those $\sigma'_{\mathfrak{a}'}$ such that $\mathfrak{a}' \subseteq \mathfrak{a}$ and $\sigma' \preceq \sigma$ in $\Sigma$.
Such a cone $\sigma'_{\mathfrak{a}'}$ is in $\Sigma_Z$, as needed.
Suppose $\sigma_{\mathfrak{a}}, \sigma'_{\mathfrak{a}'} \in \Sigma_Z$ are two cones.
Then because $\Sigma$ is a polyhedral fan, the intersection of $\sigma$ and $\sigma'$ is a colored cone $(\sigma'',\mathcal{F}'') \in \Sigma$ that is a colored face of each.
The intersection $\sigma_{\mathfrak{a}} \cap \sigma'_{\mathfrak{a}'}$ then equals $\sigma''_{\mathfrak{a} \cap \mathfrak{a}'}$, which is in $\Sigma_Z$.
\end{proof}

The following proposition is what this section has been working towards. It is an extension of Lemma 2.13 and Proposition 2.14 in \cite{Ga}, and the proof owes much of its structure to those two results.

%\begin{lemma}[cf. \cite{Ga}, Lemma 2.13]\label{importantstep2}
%Suppose $G/H$ has trivial divisor class group and $X$ is a simple $G/H$-embedding whose maximal colored cone $(\sigma,\mathcal{F})$ is a ray.
%If $Z$ is the associated toric variety and $\overline{G/H}$ is the closure of $G/H$ in $Z$, then an irreducible component of $\left( \overline{G/H} \right) \setminus (G/H)$ that intersects the $\mathbb{T}$-orbit in $Z$ corresponding to $\sigma$ induces a $G$-invariant valuation which lies in $\sigma$ $\left(\text{after possibly normalizing } \overline{G/H} \right)$.
%\end{lemma}
%\begin{proof}
%Lemma 2.13 of \cite{Ga} proves this when $\mathcal{F} = \emptyset$.
%In fact, the result in \cite{Ga} holds even when $\sigma$ lies outside the valuation cone, in which case the toric variety does not correspond to the colored fan of a spherical embedding and potentially $\overline{G/H}$ needs to be normalized.
%Suppose that $\sigma$ instead is spanned by the color $D_i$. 
%Then we have seen that the action of $G$ on $G/H$ extends to $Z$ and on every additional $G$-orbit of $Z$ outside of $Z_0$, $f_i$ vanishes and $f_j$ and $g_k$ do not for every $j \neq i$ and $k$.
%Thus, the $G$-invariant valuation induced by the $\mathbb{T}$-orbit corresponding to $\sigma$ lies on $\sigma$.
%\aside{higher dimensional cones}
%\end{proof}

\begin{proposition}[cf. \cite{Ga}, Proposition 2.14]\label{embedding}
Suppose $G/H$ has trivial divisor class group and $X$ is a $G/H$-embedding corresponding to a colored fan $\Sigma$. Then $X$ is isomorphic to the closure $\overline{G/H}$ of $G/H$ in the associated toric variety $Z$.
\end{proposition}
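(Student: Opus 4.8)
The plan is to exhibit a $G$-equivariant isomorphism $X \to \overline{G/H}$ restricting to the identity on the common dense orbit $G/H \subseteq Z_0$, by reducing to a chart-by-chart comparison of coordinate rings. First note that $\overline{G/H}$ is a $G$-stable closed subvariety of $Z$ (the action of $G$ on $Z$ constructed above extends the one on $Z_0 \supseteq G/H$), hence a $G$-variety with dense orbit $G/H$. Cover $X$ by the simple sub-embeddings $X_{\sigma,\mathcal{F}}$ indexed by the maximal colored cones $(\sigma,\mathcal{F}) \in \Sigma$, and cover $Z$ by the open toric subvarieties $Z_{\sigma,\mathcal{F}} := \bigcup_{\mathfrak{a} \in \mathfrak{A}(\mathcal{F})} U_{\sigma_{\mathfrak{a}}}$. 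Because each $\sigma_{\mathfrak{a}}$ is strictly convex, the zero cone is a face of it, so every $U_{\sigma_{\mathfrak{a}}}$ contains $Z_0$; hence $\overline{G/H} \cap Z_{\sigma,\mathcal{F}}$ is just the closure of $G/H$ inside $Z_{\sigma,\mathcal{F}}$, and two such closures agree over the chart attached to a common colored face (using Proposition \ref{fanexists} for the gluing). Thus it suffices to produce, compatibly with restriction to $G/H$, a $G$-equivariant isomorphism $X_{\sigma,\mathcal{F}} \cong \overline{G/H}^{Z_{\sigma,\mathcal{F}}}$ for each maximal colored cone.

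Fix such a cone $(\sigma,\mathcal{F})$ and refine $Z_{\sigma,\mathcal{F}}$ into its affine charts $U_{\sigma_{\mathfrak{a}}}$ with $\mathfrak{a} \in \mathfrak{A}(\mathcal{F})$ maximal, that is, $\mathfrak{a}$ contains every $v_{ij}$ with $D_i \in \mathcal{F}$ and all but one $v_{ij}$ with $D_i \notin \mathcal{F}$. The closure $\overline{G/H}^{U_{\sigma_{\mathfrak{a}}}}$ is $\operatorname{Spec}$ of the subalgebra of $\mathbb{C}(G/H)$ generated, through $\Psi$, by those Laurent monomials $\prod_{i,j} f_{ij}^{a_{ij}} \prod_k g_k^{b_k}$ whose exponent vector lies in $\sigma_{\mathfrak{a}}^\vee \cap M$. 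On the spherical side this ring should be identified with the coordinate ring of the affine open of $X_{\sigma,\mathcal{F}}$ corresponding to $\mathfrak{a}$. The mechanism is the Luna--Vust description of regular functions together with the representation theory of the modules $G \cdot f_i$: since $G \cdot f_i$ is a simple $G$-module with highest weight $v_i^*$ whose lowest-weight line is spanned by one of the $f_{ij}$, a $B$ semi-invariant rational function is regular on the relevant open precisely when the inequalities $\langle \chi_f, v_{ij} \rangle \geq 0$ and $\langle \chi_f, u_\ell \rangle \geq 0$ cutting out $\sigma_{\mathfrak{a}}^\vee$ hold, the color inequalities among them being governed by whether $D_i \in \mathcal{F}$. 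Here the definition of $\mathfrak{A}(\mathcal{F})$ is calibrated exactly: allowing the full set $\set{v_{ij}}_j$ when $D_i \in \mathcal{F}$ lets $\overline{G/H}$ acquire the closure of the color $D_i$ through the closed orbit, whereas forcing some $v_{ij} \notin \mathfrak{a}$ when $D_i \notin \mathcal{F}$ keeps the translates of $f_i$ from vanishing identically on the new boundary divisors, so that $D_i$ remains a color and is not promoted to a $G$-invariant divisor.

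Once the coordinate rings match on each maximal chart, the resulting isomorphisms are canonical — they all extend the identity on $G/H$, a common localization — hence automatically $G$-equivariant and compatible with the toric gluings among the $U_{\sigma_{\mathfrak{a}}}$; they therefore glue to $X_{\sigma,\mathcal{F}} \cong \overline{G/H}^{Z_{\sigma,\mathcal{F}}}$ and, by the first paragraph, to the global $X \cong \overline{G/H}$. Normality of $\overline{G/H}$ is not a separate issue, since it is being identified with the normal variety $X$; alternatively it follows from the good quotient $p : \hat{Z} \to Z$, under which $\overline{G/H}$ is the quotient of the closure of $p^{-1}(G/H)$ in $\hat{Z}$ by $\Gamma$. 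The main obstacle is the local coordinate-ring comparison, and within it the bookkeeping of colors: showing that the combinatorial condition defining $\mathfrak{A}(\mathcal{F})$ matches, inequality by inequality, the Luna--Vust description of which $B$ semi-invariants extend over the affine charts of $X_{\sigma,\mathcal{F}}$, with the subtlety that for $D_i \in \mathcal{F}$ one must track the whole module $G \cdot f_i$ rather than $f_i$ alone. This is precisely where the argument genuinely extends — rather than merely transcribes — Lemma 2.13 and Proposition 2.14 of \cite{Ga}.
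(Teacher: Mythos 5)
Your plan — cover $X$ and $\overline{G/H}$ by charts indexed by maximal colored cones and compare coordinate rings — is plausible in spirit but has a real gap, and it also diverges from the paper's route. The closure $\overline{G/H}$ inside $U_{\sigma_\mathfrak{a}}$ is indeed $\spec$ of the image of $\Psi$ on $\mathbb{C}[\sigma_\mathfrak{a}^\vee \cap M]$, but that image ring need not be integrally closed, whereas the corresponding chart of $X$ has a normal coordinate ring. Matching which $B$ semi-invariants are regular on each side does not by itself identify the two rings: one must also show that every regular function on the chart of $X$ actually lies in the image of $\mathbb{C}[\sigma_\mathfrak{a}^\vee \cap M]$, rather than merely being integral over it. Your remark that normality ``is not a separate issue, since it is being identified with the normal variety $X$'' is circular, and the good-quotient alternative does not close it either, since the closure of $p^{-1}(G/H)$ in $\hat Z$ is not itself known to be normal. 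The paper addresses precisely this by inserting a normalization step ($\mathfrak{R}: \mathbb{C}[U_\tau]/(\mathfrak{p}\cap\mathbb{C}[U_\tau]) \hookrightarrow R_1$) into its commutative diagram and then arguing in terms of prime divisors and valuations, which are insensitive to normalization, rather than in terms of ring generators.

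A secondary issue is that the pairing $\langle \chi_f, v_{ij}\rangle$ is not defined as written: $\chi_f \in \mathcal M$ pairs with $\mathcal N$, not $N$, and the inclusion $\mathcal N \hookrightarrow N$ dualizes to a surjection $M \twoheadrightarrow \mathcal M$, so $\chi_f$ has many preimages in $M$. Whether a $B$ semi-invariant is regular on $\overline{G/H}\cap U_{\sigma_\mathfrak{a}}$ depends on whether \emph{some} preimage lies in $\sigma_\mathfrak{a}^\vee$, and negotiating that choice is exactly the content of the assertion you leave unproved, that ``the combinatorial condition defining $\mathfrak A(\mathcal F)$ matches, inequality by inequality, the Luna--Vust description.'' Note also that the paper's treatment first dispatches the rays-only case --- citing Gagliardi for non-colored rays and giving a separate direct argument for a single colored ray by tracking where each $f_{ij}$ vanishes --- before tackling higher-dimensional cones, rather than handling all colored cones uniformly by the chart-by-chart coordinate-ring comparison you propose.
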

\begin{proof}
%We first show that an irreducible component of $\left( \overline{G/H} \right) \setminus (G/H)$ that intersects the $\mathbb{T}$-orbit in $Z$ corresponding to a ray in $N_\mathbb{Q}$ induces a $G$-invariant valuation which lies on the corresponding ray in $\mathcal{N}_\mathbb{Q} \left(\text{after possibly normalizing } \overline{G/H} \right)$.
%This extends Lemma 2.13 of \cite{Ga}, which proves this when $\mathcal{F} = \emptyset$.
%In fact, the result in \cite{Ga} holds even when $\sigma$ lies outside the valuation cone, in which case the toric variety does not correspond to the colored fan of a spherical embedding and potentially $\overline{G/H}$ needs to be normalized.
%Suppose that $\sigma$ instead is spanned by the color $D_i$. 
%Then we have seen that the action of $G$ on $G/H$ extends to $Z$ and on every additional $G$-orbit of $Z$ outside of $Z_0$, $f_i$ vanishes and $f_j$ and $g_k$ do not for every $j \neq i$ and $k$.
%Thus, the $G$-invariant valuation induced by the $\mathbb{T}$-orbit corresponding to $\sigma$ lies on $\sigma$.

Proposition 2.14 of \cite{Ga} proves the claim when $\Sigma$ consists solely of non-colored rays.
If $\Sigma$ consists of a single ray $\sigma$ spanned by the color $D_i$, then on every additional $G$-orbit of $Z$ outside of $Z_0$, $f_i$ vanishes and $f_j$ and $g_k$ do not for every $j \neq i$ and $k$.
Thus, this embedding corresponds to a ray in the direction of $\sigma$.
Moreover, this ray must have color since the $f_{ij}$ may vanish with different multiplicities along the orbits added.
After gluing together along shared orbits, we conclude that the statement of the proposition holds when $\Sigma$ consists solely of rays, both colored and not.

We now turn to proving the full claim.
It will be sufficient to prove it for simple embeddings since they can then be glued together along shared orbits.
Let $\Sigma$ consist of a single colored cone $(\sigma,\mathcal{F})$ with non-colored rays $u_1,\ldots,u_n \in \sigma(1)$.

Let the maximal cone in $\Sigma_Z$ spanned by the sets $\sigma(1)$ and $\bigcup_{D_i \in \mathcal{F}} \set{v_{ij}}_{j=1}^{s_i}$ be denoted $\tau$. 
The cone $\tau$ corresponds to an affine variety $U_\tau$.
For each ray $u_\ell \in \sigma(1)$, there is an open affine subset $U_\ell$ of $U_\tau$ with two torus orbits.
Similarly, for each $D_i \in \mathcal{F}$ and each $1 \leq j \leq s_i$, there is an open affine subset $U_{ij}$ corresponding to the ray $v_{ij}$.
Let $\pi_\ell \in \mathbb{C}[U_\tau]$ for $1 \leq \ell \leq n$ denote prime elements that cut out the closures of the $U_\ell$ in $U_\tau$.
Similarly choose prime elements $\pi_{ij} \in \mathbb{C}[U_\tau]$ that cut out the $U_{ij}$.
Then we have the following commutative diagram:
\begin{center}
\begin{tikzcd}
\mathbb{C}[U_\tau] \ar{d} \arrow[hookrightarrow]{rrr} &&& (\mathbb{C}[S_{11},\ldots, S_{rs_r},T_1,\ldots,T_m])_\mathfrak{p} \ar{d} \\
\mathbb{C}[U_\tau]/(\mathfrak{p} \cap \mathbb{C}[U_\tau]) \ar[hookrightarrow]{r}{\mathfrak{R}} & R_{1} \ar[hookrightarrow]{r}{\mathfrak{L}} & R_{2} \ar{r} & (\mathbb{C}[S_{11},\ldots, S_{rs_r},T_1,\ldots,T_m])_\mathfrak{p}/\mathfrak{p} \cong \mathbb{C}(G/H)
\end{tikzcd}
\end{center}
In the diagram, $\mathfrak{R}$ is normalization and $\mathfrak{L}$ is localization.
For each $\pi_\ell$ and each $\pi_{ij}$, there are respectively prime elements $\tilde{\pi}_\ell,\tilde{\pi}_{ij} \in R_2$
such that $V(\pi_\ell) = V(\tilde{\pi}_\ell)$ and $V(\pi_{ij}) = V(\tilde{\pi}_{ij})$ in $\spec{R_2}$.
Now $L \in \set{S_{ij}, T_k}$ can be written in the form 
\[
L = c \cdot \prod_{\ell=1}^n \left(\pi_\ell^{d_{\ell,1}}/\pi_\ell^{d_{\ell,2}} \right) \cdot \prod_{D_i \in \mathcal{F}, 1 \leq j \leq s_i} \left(\pi_{ij}^{d_{ij,1}}/\pi_{ij}^{d_{ij,2}} \right)
\] 
for $d_\ell,d_{i,j} \in \mathbb{Z}_{\geq 0}$ and $c \in \mathbb{C}[U_\tau]^*$.
We may further write
\[
L = \tilde{c} \cdot \prod_{\ell=1}^n \left(\tilde{\pi}_\ell^{e_\ell d_{\ell,1}}/\tilde{\pi}_\ell^{e_\ell d_{\ell,2}} \right) \cdot \prod_{D_i \in \mathcal{F}, 1 \leq j \leq s_i} \left(\tilde{\pi}_{ij}^{e_{ij}d_{ij,1}}/\tilde{\pi}_{ij}^{e_{ij}d_{ij,2}} \right)
\] 
where $e_\ell,e_{ij} \in \mathbb{Z}_{\geq 0}$ and $\tilde{c} \in R_{2}^*$.

The valuations in the colored cone $(\sigma,\mathcal{F})$ are positive $\mathbb{Q}$-linear combinations of the $u_\ell$ and the valuations induced by the colors $D_i \in \mathcal{F}$.
It follows from the above argument that every such valuation is induced by a torus invariant one in $Z$.
This proves the claim for simple spherical embeddings. Gluing together along shared orbits gives the full result.
\end{proof}

It still remains to consider when the homogeneous space $\bm{G}/\bm{H}$ has nontrivial divisor class group.
Recall that the colors $D_1,\ldots,D_r$ of the associated homogeneous space $G/H$ are precisely the pullbacks of the colors $\bm{D}_1,\ldots,\bm{D}_r$ of $\bm{G}/\bm{H}$.
For each $D_i$, Gagliardi defines $f_i \in \Gamma(G/H,\mathcal{O}_{G/H})$ such that $V(f_i) = D_i$.
There is an inclusion from the character lattice of $(\mathbb{C}^*)^{\bm{\mathcal{D}}}$ to $\mathbb{C}[G]^*$ that takes a character $\chi$ to a monomial $\epsilon^\chi \in \mathbb{C}\left[ \left( \mathbb{C}^* \right)^{\bm{\mathcal{D}}} \right] \subset \mathbb{C}[G]^*$.
Then we may write $f_i := \bm{f}_i\epsilon^{-\eta_i}$, where $\eta_i$ is the character that is trivial on every coordinate not equal to $i$.
The function $f_i$ is invariant under the action of $H$ from the right and $V(f_i) = D_i$.
Note that as a result $(\mathbb{C}^*)^{\bm{\mathcal{D}}}$ has a nontrivial action on $f_i$ for all $i$.

Then let $\bm{X}$ be a $\bm{G}/\bm{H}$-embedding with polyhedral colored fan $\bm{\Sigma}$.
For each color $\bm{\mathcal{D}_i}$ appearing in $\bm{\Sigma}$, include in $\Sigma$ a colored ray corresponding to $D_i$.
For each ray without color, take the preimage of this ray under $\bm{\pi}_*|_{{(\mathcal{N}_T)}_\mathbb{Q}}$ as we did for toroidal embeddings.
Then add in higher-dimensional cones between these rays in $\mathcal{N}_\mathbb{Q}$ if they exist in $\bm{\Sigma}$.
As in the toroidal case, there is a good geometric quotient $\bm{\pi}: Z \rightarrow \bm{Z}$ that extends $\bm{\pi}: G/H \rightarrow \bm{G}/\bm{H}$.
We obtain the following result, similarly to the toroidal case:
\begin{proposition}\label{nontriv}
Let $\bm{G}/\bm{H} \hookrightarrow \bm{X}$ be a spherical embedding with associated spherical embedding $G/H \hookrightarrow X$ where $G/H$ has trivial divisor class group.
Then $\overline{\bm{G}/\bm{H}} \cong \bm{X}$, where the closure is taken in $\bm{Z}$.
\end{proposition}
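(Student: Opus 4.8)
The plan is to deduce the proposition from Proposition \ref{embedding} together with the good quotient $\bm{\pi}\colon Z \to \bm{Z}$, exactly mirroring the toroidal discussion of \textsection\ref{toroidal}. By construction the homogeneous space $G/H$ has trivial divisor class group and $X$ is the $G/H$-embedding attached to the fan $\Sigma$, so Proposition \ref{embedding} gives $X \cong \overline{G/H}$ with the closure taken in $Z$. Now $G/H = \bm{\pi}^{-1}(\bm{G}/\bm{H})$ is stable under the torus $(\mathbb{C}^*)^{\bm{\mathcal{D}}}$, hence so is its closure $X$ in $Z$. I would then invoke the standard properties of good quotients (available here via \cite{Sw}, Theorem 4.1 and \cite{AH}, Proposition 3.2): the image $\bm{\pi}(X)$ of a closed invariant set is closed in $\bm{Z}$, and continuity gives $\bm{\pi}(X) = \bm{\pi}(\overline{G/H}) \subseteq \overline{\bm{\pi}(G/H)} = \overline{\bm{G}/\bm{H}}$; since $\bm{\pi}(X)$ is closed and contains $\bm{\pi}(G/H) = \bm{G}/\bm{H}$, the reverse inclusion holds too, so $\bm{\pi}(X) = \overline{\bm{G}/\bm{H}}$ inside $\bm{Z}$. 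Because $X$ is a normal spherical variety and good quotients of normal varieties are normal, $\overline{\bm{G}/\bm{H}}$ is a genuine $\bm{G}/\bm{H}$-embedding.

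What remains is to identify $\bm{\pi}(X)$ with $\bm{X}$. I would argue on colored fans: by the classification of spherical embeddings it suffices to check that the colored fan of $\bm{\pi}(X)$ is $\bm{\Sigma}$. The fan $\Sigma$ was designed precisely so that this holds---it keeps a colored ray for each color $\bm{D}_i$ occurring in $\bm{\Sigma}$ (recall the colors of $G/H$ are exactly the pullbacks of those of $\bm{G}/\bm{H}$), takes the preimage under $\bm{\pi}_*|_{(\mathcal{N}_T)_\mathbb{Q}}$ of each ray without color, and then spans the higher-dimensional cones present in $\bm{\Sigma}$. Using the description of morphisms of spherical embeddings from \textsection 4 of \cite{Kn}, the induced map on colored fans sends each colored cone of $\Sigma$ to its image under $\bm{\pi}_*$, and Gagliardi's Proposition 3.4 (that $\bm{\pi}_*$ restricts to an isomorphism on $(\mathcal{N}_T)_\mathbb{Q}$) ensures this image lands on the corresponding cone of $\bm{\Sigma}$ with matching color data. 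This sets up a bijection between $G$-orbit closures of $X$ and $\bm{G}$-orbit closures of $\bm{X}$ compatible with $\bm{\pi}$, hence $\bm{\pi}(X) = \bm{X}$; combined with the previous paragraph, $\overline{\bm{G}/\bm{H}} \cong \bm{X}$ (cf. Definition 5.0.5 and Theorem 5.0.6 of \cite{CLS}).

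The step I expect to be the main obstacle is the orbit-and-color bookkeeping in the second paragraph: one has to verify that quotienting by the extra factor $(\mathbb{C}^*)^{\bm{\mathcal{D}}}$ is clean enough that the $G$-orbits of $Z \setminus Z_0$ match up bijectively under $\bm{\pi}$ with the $\bm{G}$-orbits of $\bm{Z} \setminus \bm{Z}_0$, and in particular that a colored ray attached to $D_i$ in $\Sigma$ is carried to the colored ray attached to $\bm{D}_i$ in $\bm{\Sigma}$, rather than being merged with or split off from a ray without color. Everything else is a routine transcription of the toroidal argument, with Proposition \ref{embedding} playing the role of Gagliardi's Proposition 2.14.
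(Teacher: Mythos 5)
Your proposal is correct and follows essentially the same route the paper takes: the paper gives this argument inline at the end of \textsection\ref{toroidal} and then simply asserts Proposition \ref{nontriv} ``similarly to the toroidal case,'' i.e.\ Proposition \ref{embedding} gives $X \cong \overline{G/H}$ in $Z$, the geometric quotient $\bm{\pi}\colon Z \to \bm{Z}$ carries this closure to $\overline{\bm{G}/\bm{H}}$ (citing \cite{Sw}, \cite{AH}, and \cite{CLS}, Definition 5.0.5 and Theorem 5.0.6), and the construction of $\Sigma$ as a colored ray for each $D_i$ plus preimages under $\bm{\pi}_*|_{(\mathcal{N}_T)_\mathbb{Q}}$ ensures $\bm{\pi}(X) = \bm{X}$. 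Your two additions---the explicit good-quotient topology argument (closed image, continuity giving both inclusions) and the normality remark---are correct and helpful filler for steps the paper leaves implicit, but they do not change the underlying strategy.
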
 

%\begin{example}
%We return to the example of $G = \Gl_2 \times \Gl_2$ and $G/H = \Gl_2$. There is one color cut out by $f_{1} := x_{22}$ and $\Gamma(G/H,\mathcal{O}_{G/H}^*)/\mathbb{C}^*$ is generated by $g_1 := \det{(x_{ij})}$.
%Then $Z_0 \cong (\mathbb{C}^4 \setminus \set{0}) \times \mathbb{C}^*$ and the embedding $G/H \hookrightarrow Z_0$ is given by
%\[
%(x_{ij}) \mapsto (x_{22},x_{21},x_{12},x_{11},\det{(x_{ij})})
%\]
%The lattice $\mathcal{N}$ is spanned by $v_1$ corresponding to $x_{22}$ and $w_1$ corresponding to $\det(x_{ij})$.
%Consider the $G/H$-embedding $\mathbb{C}^4 \setminus \set{0}$, whose colored fan has a single maximal cone spanned by $w_1$.
%This corresponds to the adding in the nonzero matrices with determinant zero. 
%\aside{reference section 1?}
%The variety $Z$ is then $(\mathbb{C}^4 \setminus \set{0}) \times \mathbb{C}$, and the closure of $G/H$ in $Z$ is $V(f_{11}f_{14} - f_{12}f_{13} - g_1) \cong \mathbb{C}^4 \setminus \set{0}$.
%
%To form $\hat{Z}$, we add an additional direct summand $\mathbb{C}$ to $Z$ to account for the cone $w_1$, which results in $\hat{Z} \cong \mathbb{C}^4 \times \mathbb{C}^* \times \mathbb{C} \setminus \hat{S}$, where $\hat{S}$ is the union of the sets   \aside{what about $\hat{S}$?}.
%The morphism $\hat{Z} \rightarrow Z$ is given by
%\[
%(a_{22},a_{21},a_{12},a_{11},b,c) \mapsto (a_{22},a_{21},a_{12},a_{11},bc). 
%\] 
%\aside{IS THIS RIGHT?}
%The pre-image of a point in $Z$ is an orbit of the subtorus $\Gamma$:
%\[
%\Gamma := \set{(1,1,1,1,t^{-1},t) : t \in \mathbb{C}^*}.
%\]
%\end{example}

\section{Extended (Global) Tropicalization}\label{extendedtrop}

This section finally gives a global construction for the tropicalization of a spherical embedding.
Because of the issues raised in \textsection \ref{badcolors}, this construction can only work for a certain class of spherical embeddings, namely those that have the $A_2$-property.
For many homogeneous spaces, this consideration will not raise any issues.
For example, if $G/H$ is horospherical or if it has fewer than two colors lying outside the valuation cone, the global tropicalization will always go through without issue.

As before, we begin in the case that $G/H$ has trivial divisor class group and we maintain the notation introduced previously.
Let $X$ be a $G/H$-embedding with the associated toric varieties $Z$ and $\hat{Z}$ and morphism $p: \hat{Z} \rightarrow Z$. 
Our methodology will be to work in the toric world with $\hat{Z}$ and $Z$ where results are known. Then we will apply an extension of the map $\psi$.
The general theory in the following discussion can be found in more generality in \textsection 6.1 of \cite{MS}; also refer to \cite{CLS}, \textsection 5.1.

There is a short exact sequence
\[
0 \rightarrow N \hookrightarrow \hat{N} \rightarrow A_{n-1}\left(\hat{Z}\right) \rightarrow 0,
\]
where $A_{n-1}\left(\hat{Z} \right)$ is the cokernel of the natural inclusion $N \hookrightarrow \hat{N}$.
Applying the functor $\Hom(-,\mathbb{C}^*)$, we obtain the following exact sequence:
\[
\Hom(N,\mathbb{C}^*) \leftarrow \Hom\left(\hat{N},\mathbb{C}^* \right) \hookleftarrow Q \leftarrow 0,
\]
where $Q := \Hom\left(A_{n-1}\left(\hat{Z}\right),\mathbb{C}^*\right)$.
Let $E_\ell$ denote the coordinate in $\mathbb{C}\left[\hat{Z}\right]$ corresponding to the ray $e_\ell$ so that 
\[
\mathbb{C}\left[\hat{Z}\right] = \mathbb{C}[S_{11},\ldots,S_{rs_r},T_1,\ldots,T_m,E_1,\ldots,E_n].
\]
Then the \emph{irrelevant ideal} in $\mathbb{C}\left[\hat{Z} \right]$ is 
\[
F := \left\langle \prod_{v_{ij} \notin \sigma} S_{ij} \cdot \prod_{w_{k} \notin \sigma} T_k \cdot \prod_{e_\ell \notin \sigma} E_\ell : \sigma \in \Sigma_{\hat{Z}}\right\rangle
\]
and 
\[
\hat{Z} \cong \mathbb{C}^{s_1 + \cdots + s_r + m + n} \setminus V(F) \cong \mathbb{C}^{s_1 + \cdots + s_r} \times (\mathbb{C}^*)^m \times \mathbb{C}^n \setminus \hat{S}
\]
for some $\hat{S}$ of codimension at least two.
Finally, we have that 
\[
Z \cong (\mathbb{C}^{s_1 + \cdots + s_r + m + n} \setminus V(F))/Q.
\]
This quotient construction of $Z$ tropicalizes in the following sense (see \cite{MS}, Proposition 6.2.6 and Corollary 6.2.16):
\begin{proposition}
Suppose $G/H$ has trivial divisor class group and suppose $Y \subseteq G/H$ is a closed subvariety. Let $X$ be a $G/H$-embedding with the $A_2$-property with associated toric varieties $Z$ and $\hat{Z}$ and let $\overline{Y}$ be the closure of $Y$ in $Z$.
Then 
\[
\trop_{\mathbb{T}}\left(\overline{Y} \right) \cong \left( \overline{\mathbb{Q}}^{s_1 + \cdots + s_r + m + n} \setminus \trop_{\hat{\mathbb{T}}}(V(F)) \right)/\trop_{\hat{\mathbb{T}}}(Q).
\]
\end{proposition}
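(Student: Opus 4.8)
The plan is to deduce this statement directly from the general tropicalized-Cox-quotient machinery for toric varieties, as recorded in \textsection 6.2 of \cite{MS} (Proposition 6.2.6 and Corollary 6.2.16), once the relevant objects have been put in the right form. The key observation is that $Z$ is presented as the geometric quotient $Z \cong (\mathbb{C}^{s_1 + \cdots + s_r + m + n} \setminus V(F))/Q$ by the action of the diagonalizable group $Q = \Hom(A_{n-1}(\hat Z),\mathbb{C}^*)$, and that $\overline{Y}$ is a closed subvariety of $Z$. The preimage $\pi^{-1}(\overline{Y})$ in $\mathbb{C}^{s_1+\cdots+s_r+m+n} \setminus V(F)$ is the affine cone over $\overline{Y}$ with respect to this grading, i.e.\ it is cut out by the $A_{n-1}(\hat Z)$-homogeneous elements of the ideal of $\overline{Y}$.

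First I would recall the precise form of the general result: if $W$ is a closed subvariety of a simplicial toric variety $Z_\Sigma$ with Cox-ring presentation $Z_\Sigma = (\mathbb{A}^N \setminus V(B))/G$, then $\trop_{\mathbb{T}}(W) = (\overline{\mathbb{Q}}^N \setminus \trop_{\hat{\mathbb{T}}}(V(B)))/\trop_{\hat{\mathbb{T}}}(G)$, where $\trop_{\hat{\mathbb{T}}}(G) \subseteq \overline{\mathbb{Q}}^N$ is the image of the tropicalized subtorus (equivalently, the linear subspace spanned by the image of the cocharacter lattice of $G$), and the quotient is as a topological space with the quotient topology. The $A_2$-property hypothesis is exactly what guarantees, via W\l odarczyk's theorem and Proposition \ref{embedding} (or rather the surrounding construction), that $X$ — and hence $Z$ — genuinely arises from such a closed toric embedding, so that this machinery applies and $\overline{Y}$ really is a closed subvariety of $Z$ whose preimage in $\hat{Z}$ (or in affine space) is well-behaved.

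The steps, in order, would be: (1) identify $\hat{Z} \cong \mathbb{C}^{s_1+\cdots+s_r+m+n} \setminus V(F)$ and the group $Q$ with the ambient affine space and Cox group in the general setup, using the short exact sequence $0 \to N \to \hat N \to A_{n-1}(\hat Z) \to 0$ and its dual; (2) verify that $\hat{Z}$ is simplicial, or at least that the presentation is of the form to which \cite{MS} Corollary 6.2.16 applies — this is automatic here since each cone $\hat\sigma_{\mathfrak a}$ of $\Sigma_{\hat Z}$ is, by construction, spanned by a subset of the orthonormal-type basis $\{v_{ij}\} \cup \{e_\ell\}$ and hence smooth; (3) check that $\pi^{-1}(\overline Y)$ is the closure in $\hat Z$ of $\iota^{-1}$ of $Y$ pulled back appropriately, so that the cited result computes $\trop_{\mathbb{T}}(\overline Y)$ as the quotient of $\trop_{\hat{\mathbb{T}}}(\pi^{-1}(\overline Y))$; (4) unwind $\trop_{\hat{\mathbb{T}}}(V(F))$ and $\trop_{\hat{\mathbb{T}}}(Q)$ from the defining formula for the irrelevant ideal $F$ and from $Q = \Hom(A_{n-1}(\hat Z),\mathbb{C}^*)$, noting that $\trop_{\hat{\mathbb{T}}}(Q)$ is the image of $N_{\mathbb{Q}} \hookrightarrow \hat N_{\mathbb{Q}}$; (5) assemble these into the stated homeomorphism.

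The main obstacle I anticipate is step (2)–(3): making sure that the hypotheses of \cite{MS} Corollary 6.2.16 are met on the nose — in particular that $\hat Z$ is simplicial and that $\overline Y$ meets the torus $\mathbb{T}$ (so that $\trop_{\mathbb{T}}(\overline Y)$ is computed correctly from the closure of $Y \cap \mathbb{T}$), and that the quotient on the tropical side is taken in the correct (non-Hausdorff, extended) category so that the codimension-$\geq 2$ removed set $\hat S$ and its tropicalization interact correctly with the quotient. Since $G/H$ is dense in $\overline Y$'s ambient $Z$ and $G/H \cap \mathbb{T}$ is dense in $G/H$ (as $G/H \hookrightarrow Z_0 \subseteq Z$ and $Z_0$ contains the torus densely), the torus-intersection condition holds, so the remaining care is purely in matching conventions for extended tropicalization and for the quotient topology; this is where I would spend the bulk of the write-up, citing \textsection 6.1–6.2 of \cite{MS} for the foundational facts and \textsection 5.1 of \cite{CLS} for the Cox-quotient side.
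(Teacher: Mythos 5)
Your proposal is correct and takes the same route as the paper: the paper gives this proposition no independent argument, it is stated as a direct consequence of \cite{MS}, Proposition 6.2.6 and Corollary 6.2.16 applied to the Cox-quotient presentation $Z \cong (\mathbb{C}^{s_1+\cdots+s_r+m+n}\setminus V(F))/Q$ that has just been set up via the exact sequence $0 \to N \to \hat N \to A_{n-1}(\hat Z) \to 0$. Your steps (1)--(5) simply spell out what that citation is doing — identifying $\hat Z$ and $Q$ with the ambient affine space and the Cox torus, checking the fan $\Sigma_{\hat Z}$ is simplicial (indeed smooth, since each $\hat\sigma_{\mathfrak a}$ is generated by a subset of the basis $\{v_{ij}\}\cup\{e_\ell\}$), passing to the preimage of $\overline Y$, and applying the MS quotient description — so you have recovered the implicit content of the paper's one-line justification.
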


This result provides a means for globally tropicalizing a toric variety. That is to say, we do not need to consider any separate pieces and then glue together, we may simply tropicalize a single toric variety and take an appropriate quotient.
To obtain a universal tropicalization for spherical embeddings, it will be sufficient to describe how to recover $\trop_G\left( \overline{Y} \right)$ from $\trop_\mathbb{T}\left( \overline{Y} \right)$.
When $\overline{Y}$ is replaced by $Y$, we described this in Theorem \ref{V=G} using the piecewise projection map $\psi$:
\begin{align*}
\psi: \trop_{\mathbb{T}}(Z_0) & \rightarrow \mathcal{N}_\mathbb{Q} \\
(a_{11},\ldots,a_{1s_1},a_{21},\ldots,a_{rs_r},b_1,\ldots,b_m) & \mapsto \left(\min_{1 \leq j \leq s_1} \set{a_{1j}},\ldots,\min_{1 \leq j \leq s_r}\set{a_{rj}},b_1,\ldots,b_m \right).
\end{align*}

We now define an extension $\overline{\psi}: \trop_{\mathbb{T}}(Z) \rightarrow \trop_G(X)$ of $\psi$ that takes extended $\mathbb{T}$-invariant valuations to extended $G$-invariant valuations.
Applying this map to the global toric tropicalization above will afford a global spherical tropicalization.
Let $(\sigma,\mathcal{F})$ be a colored cone in the colored fan of the $G/H$-embedding $X$ and let $\mathfrak{a} \in \mathfrak{A}(\mathcal{F})$.
In the extended tropicalization of the spherical variety, tropicalizing the orbit corresponding to the colored cone $(\sigma,\mathcal{F})$ corresponds to adding in semigroup homomorphisms in $\Hom^{\mathcal{V}}{\left(\sigma^\vee \cap \mathcal{M},\overline{\mathbb{Q}}\right)}$ and viewing them as limit points of $\trop_G(G/H) := \mathcal{V} \subseteq \Hom(\mathcal{M}, \mathbb{Q})$.

Before proceeding, we deal with a slight clash of notation that comes up here.
Thought of as a valuation, $\mu \in \Hom{(\sigma_\mathfrak{a}^\vee \cap \mathcal{M}, \overline{\mathbb{Q}})}$ acts on a lattice of torus semi-invariant rational functions, where the group action is multiplicative.
As an element of $\Hom{(\sigma_\mathfrak{a}^\vee \cap \mathcal{M}, \overline{\mathbb{Q}})}$, however, $\mu$ acts on the additive semigroup $\sigma_\mathfrak{a}^\vee \cap M$.
This can be addressed this by identifying an element $(a_{11},\ldots,a_{rs_r},b_1,\ldots,b_m) \in \sigma_\mathfrak{a}^\vee \cap M$ with the function $f_{11}^{a_{11}} \cdots f_{rs_r}^{a_{rs_r}}g_1^{b_1} \cdots g_m^{b_m}$.
Similarly, $(a_1,\ldots,a_r,b_1,\ldots,b_m) \in \sigma^\vee \cap \mathcal{M}$ is identified with $f_1^{a_1} \cdots f_r^{s_r} g_1^{b_1} \cdots g_m^{b_m}$.

The extension $\overline{\psi}$ will take an extended valuation $\mu \in \Hom{(\sigma_\mathfrak{a}^\vee \cap M, \overline{\mathbb{Q}})}$ to $\Hom^{\mathcal{V}}{(\sigma^\vee \cap \mathcal{M}, \overline{\mathbb{Q}})}$.
Suppose $\mu \in \trop_\mathbb{T}(O)$ where $O$ corresponds to the cone $\sigma_\mathfrak{a}$.
Further let $(\sigma,\mathcal{F})$ be the associated colored cone corresponding to a $G$-orbit $\mathcal{O}$.
Define a set $\Omega \subset \mathbb{Z}^{r}$ as follows:
\[
\Omega := \set{\omega \in \mathbb{Z}^r : 1 \leq \omega(i) \leq s_i \text{ for all } 1 \leq i \leq r}.
\]

Then for any $\mu \in \Hom\left(\sigma_\mathfrak{a}^\vee \cap M, \overline{\mathbb{Q}} \right)$, we define $\overline{\psi}(\mu) \in \Hom^\mathcal{V}\left(\sigma^\vee \cap \mathcal{M}, \overline{\mathbb{Q}} \right)$ to be infinite on $(\sigma^\vee \setminus \sigma^\perp) \cap \mathcal{M}$ and to act on $f_1^{a_1}\cdots f_r^{a_r}g_1^{b_1}\cdots g_m^{b_m} \in \sigma^\perp \cap \mathcal{M}$ as follows:
\[
\overline{\psi}(\mu)\left( f_1^{a_1}\cdots f_r^{a_r}g_1^{b_1}\cdots g_m^{b_m} \right) = \min\set{\mu\left( f_{1\omega(1)}^{a_1}\cdots f_{r\omega(r)}^{a_r}g_1^{b_1}\cdots g_m^{b_m} \right) : 
\omega \in \Omega
}.
\]

\begin{remark}\label{allofOmega}
There is tacit assumption here that we only consider $\omega \in \Omega$ such that 
\[
f_{1\omega(1)}^{a_1}\cdots f_{r\omega(r)}^{a_r}g_1^{b_1}\cdots g_m^{b_m} \in \sigma_\mathfrak{a}^\vee \cap M
\] to ensure that $\mu$ is well-defined.
When $\mu = \nu_\gamma$ is induced by a $\mathbb{C}\{\!\{t\}\!\}$-point $\gamma$, then the minimum may be taken over the entirety of $\Omega$. An explanation of this appears in the proof of Theorem \ref{extendedmap}.
\end{remark}

\begin{example}
We will extend Example \ref{punctured affine in toric}. Recall that in this setting $r = 1$, $f_{11} = y$, and $f_{12} = x$. Consider the embedding $\Bl_0\left(\mathbb{C}^2 \right)$ of $G/H = \mathbb{C}^2 \setminus \set{0}$ given by a single non-colored ray in the direction of $v_1$, which we call $\sigma$.
The associated toric variety $Z$ is also $\Bl_0\left(\mathbb{C}^2 \right)$, given by the fan shown in Figure \ref{Blow up fans}.
\begin{figure}[H]
\begin{tikzpicture}
\draw[thick,->] (-5,1)--(-3,1);
\draw[fill] (-5,1) circle(.05);
\draw (-4,.75) node {$\sigma$};

\draw[gray!25,fill=gray!25] (0,0) rectangle (2,2);
\draw[thick,->] (0,0)--(2,0);
\draw[thick,->] (0,0)--(0,2);
\draw[thick,->] (0,0)--(2,2);
\draw[fill] (0,0) circle(.05); 

\draw (1,-.25) node {$v_{12}$};
\draw (-.4,1) node {$v_{11}$};
\draw (2.3,1.8) node {$\sigma$};
\end{tikzpicture}
\caption{Fans for $\Bl_0\left(\mathbb{C}^2 \right)$ as a spherical embedding (left) and a toric embedding (right)}
\label{Blow up fans}
\end{figure}
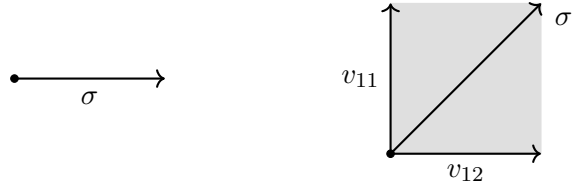
In the vector space $N_\mathbb{Q}$ corresponding to the toric variety, $\sigma$ is the ray in the fan of $\Bl_0\left(\mathbb{C}^2 \right)$ corresponding to the exceptional divisor.
There are three possibilities for $\mathfrak{a}$: $\mathfrak{a} = \emptyset$, $\set{v_{11}}$, or $\set{v_{12}}$.
These respectively correspond to three cones $\sigma_\mathfrak{a}$: the diagonal ray $\sigma$, the two-dimensional cone spanned by $\sigma$ and $v_{11}$, and the two-dimensional cone spanned by $\sigma$ and $v_{12}$.

The spherical tropicalization of $\Bl_0(\mathbb{C}^2)$ is isomorphic to $\overline{\mathbb{Q}}$, where the tropicalization of $\mathcal{E}$ is the point $\infty$.
In other words, $\sigma^\perp \cap \mathcal{M}$ is the origin in $\mathcal{N}$ and corresponds to the map $\mu \in \Hom\left( \sigma^\vee \cap \mathcal{M}, \overline{\mathbb{Q}} \right)$ sending every non-zero element of $\sigma^\vee \cap \mathcal{M}$ to $\infty$, i.e. the extended $G$-invariant map $\mathbb{C}[x,y] \rightarrow \overline{\mathbb{Q}}$ that is $\infty$ on all non-constant functions.

Therefore, if $\mu \in \sigma_\mathfrak{a}^\vee \cap M$ for some choice of $\mathfrak{a}$, then it must by definition of $\overline{\psi}$ be sent to $\infty$ in $\overline{\mathbb{Q}}$.
This can be viewed as collapsing the diagonally-oriented one-dimensional vector space and the two zero-dimensional vector spaces in $\trop_\mathbb{T}\left( \Bl_0\left(\mathbb{C}^2 \right) \right)$ to a point. See Figure \ref{Blow up trops} for reference.

\begin{figure}[H]
\begin{tikzpicture}
\draw[gray!25,fill=gray!25] (-10,0)--(-8,0)--(-8,1)--(-9,2)--(-10,2);
\draw[thick] (-8,1)--(-8,0);
\draw[thick] (-9,2)--(-10,2);
\draw[thick] (-9,2)--(-8,1);
\draw[fill] (-9,2) circle(.05); 
\draw[fill] (-8,1) circle(.05);

\draw[gray!25,fill=gray!25] (-5,0)--(-3,0)--(-3,1)--(-4,2)--(-5,2);
\draw[thick] (-3,1)--(-3,0);
\draw[thick] (-4,2)--(-5,2);
\draw[thick] (-4,2)--(-3,1);
\draw[fill] (-4,2) circle(.05); 
\draw[fill] (-3,1) circle(.05);

\draw[thick,dotted] (-5,0)--(-3.5,1.5);
\draw[fill] (-3.5,1.5) circle(.05);

\draw[thick,->, dotted] (-3,.25)--(-4.6,.25);
\draw[thick,->, dotted] (-3,.75)--(-4.1,.75);
\draw[thick,->, dotted] (-4.75,2)--(-4.75,.4);
\draw[thick,->, dotted] (-4.25,2)--(-4.25,.9);

\draw[thick,->, dotted] (-2.875,1.125)--(-3.25,1.5);
\draw[thick,->, dotted] (-3.875,2.125)--(-3.5,1.75);

\draw[thick] (0,0)--(1.5,1.5);
\draw[fill] (1.5,1.5) circle(.05);

\draw (-9,-.5) node {$\trop_\mathbb{T}\left( \Bl_0\left( \mathbb{C}^2 \right) \right)$};
\draw (-4,-.5) node {Action of $\overline{\psi}$};
\draw (1,-.5) node {$\trop_G\left( \Bl_0\left( \mathbb{C}^2 \right) \right)$};

\end{tikzpicture}
\caption{$\overline{\psi}\left( \trop_\mathbb{T}\left( \Bl_0\left( \mathbb{C}^2 \right) \right) \right) = \trop_G\left( \Bl_0\left( \mathbb{C}^2 \right) \right)$}
\label{Blow up trops}
\end{figure}
\end{example}

In the previous example we saw that $\overline{\psi}$ takes the extended toric tropicalization to the extended spherical tropicalization.
Our goal now is to prove that this holds in general. 
To accomplish this, we need an additional theorem:

\begin{theorem}[\cite{MS}, Theorem 6.2.18]\label{closurecommutestoric}
Let $Y \subseteq \mathbb{T}$, and let $\overline{Y}$ be the closure of $Y$ in a toric variety $Z$. Then
\[
\trop_\mathbb{T}\left( \overline{Y} \right) = \overline{\trop_\mathbb{T}(Y)}.
\]
\end{theorem}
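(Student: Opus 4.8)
The plan is to prove the two inclusions separately; the containment $\overline{\trop_\mathbb{T}(Y)} \subseteq \trop_\mathbb{T}(\overline{Y})$ is formal, while the reverse one carries all the weight. For the easy direction, $\overline{Y}$ is a closed subvariety of $Z$, so $\trop_\mathbb{T}(\overline{Y})$ is by construction a closed subset of $\trop_\mathbb{T}(Z)$; it clearly contains $\trop_\mathbb{T}(Y)$, since every $\mathbb{C}\{\!\{t\}\!\}$-point of $Y$ is also one of $\overline{Y}$, and therefore it contains the closure $\overline{\trop_\mathbb{T}(Y)}$.

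For the reverse inclusion I would argue chart by chart and then stratum by stratum. Since both $\trop_\mathbb{T}(-)$ and the formation of closures respect the open cover $Z = \bigcup_\sigma U_\sigma$ by affine toric charts, it is enough to show $\trop_\mathbb{T}(\overline{Y} \cap U_\sigma) \subseteq \overline{\trop_\mathbb{T}(Y)}$ for each cone $\sigma$ in the fan of $Z$. Over $U_\sigma$ I would use the orbit decomposition $U_\sigma = \bigsqcup_{\tau \preceq \sigma} O(\tau)$ and the matching decomposition $\trop_\mathbb{T}(U_\sigma) = \bigsqcup_{\tau \preceq \sigma} N(\tau)_{\mathbb{Q}}$ (with $N(\tau) = N/(N \cap \operatorname{span}\tau)$ the cocharacter lattice of $O(\tau)$), under which the $\tau$-stratum of $\trop_\mathbb{T}(\overline{Y})$ is $\trop_{T(\tau)}(\overline{Y} \cap O(\tau))$, the tropicalization of the intersection of $\overline{Y}$ with the orbit torus $O(\tau) = T(\tau)$. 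So I would fix a face $\tau \preceq \sigma$, a lattice point $u$ in the relative interior of $\tau$, and a point $w$ of $\trop_{T(\tau)}(\overline{Y} \cap O(\tau))$; viewed inside $\trop_\mathbb{T}(U_\sigma)$ this $w$ takes the value $\infty$ on $(\sigma^\vee \setminus \tau^\perp) \cap M$ and is finite on $\tau^\perp \cap \sigma^\vee \cap M$. The goal is to realize $w$ as a limit of points of $\trop_\mathbb{T}(Y) \subseteq N_\mathbb{Q}$.

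The key input, and the step I expect to be the main obstacle, is the Gröbner-theoretic description of the orbit slice $\overline{Y} \cap O(\tau)$ as the toric degeneration of $Y$ attached to $\tau$: the ideal of $\overline{Y} \cap O(\tau)$ in $\mathbb{C}[\tau^\perp \cap M]$ is obtained from the initial ideal $\operatorname{in}_u(I(Y))$ by setting to zero the monomials that vanish on $O(\tau)$, and this is independent of the choice of $u$ in the relative interior of $\tau$. Combined with the constant-coefficient Fundamental Theorem of tropical geometry, this identifies the $\tau$-stratum of $\trop_\mathbb{T}(\overline{Y})$ with the star of $\trop_\mathbb{T}(Y)$ along $\tau$; concretely, our $w$ admits a lift $\tilde w \in N_\mathbb{Q}$ (a linear functional on $M$ restricting on $\tau^\perp \cap M$ to the finite part of $w$) for which $u + \epsilon \tilde w \in \trop_\mathbb{T}(Y)$ whenever $\epsilon > 0$ is small enough. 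Establishing this correspondence --- or quoting it precisely from the toric literature --- is the technical heart; it rests on the Gröbner theory of torus degenerations of subvarieties of a torus together with Kapranov's theorem, everything else being bookkeeping about the orbit stratification.

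Granting this, I would conclude as follows. Since $\trop_\mathbb{T}(Y)$ is closed under scaling by positive rationals --- one may replace a $\mathbb{C}\{\!\{t\}\!\}$-point $\gamma$ of $Y$ by its precomposition with a substitution $t \mapsto t^{c}$, $c \in \mathbb{Q}_{>0}$ --- the point $p_\epsilon := \tfrac{1}{\epsilon} u + \tilde w = \tfrac{1}{\epsilon}(u + \epsilon \tilde w)$ lies in $\trop_\mathbb{T}(Y)$ for all small $\epsilon > 0$. Because $u$ is in the relative interior of $\tau \preceq \sigma$, for $m \in \sigma^\vee \cap M$ we have $\langle u, m\rangle \geq 0$, with equality exactly when $m \in \tau^\perp$. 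Hence, as $\epsilon \to 0^+$, $\langle p_\epsilon, m\rangle \to \langle \tilde w, m\rangle = w(m)$ for $m \in \tau^\perp \cap \sigma^\vee \cap M$, while $\langle p_\epsilon, m\rangle \to \infty = w(m)$ for $m \in (\sigma^\vee \setminus \tau^\perp) \cap M$. Thus $p_\epsilon \to w$ in the topology on $\trop_\mathbb{T}(U_\sigma)$, so $w \in \overline{\trop_\mathbb{T}(Y)}$, which completes the proof.
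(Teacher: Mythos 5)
The paper does not prove this statement; it cites it verbatim as Theorem 6.2.18 of Maclagan--Sturmfels, so there is no in-paper argument to compare against. Your outline follows the strategy used in the cited reference: reduce to affine toric charts, stratify by torus orbits, identify the $\tau$-stratum of $\trop_\mathbb{T}(\overline{Y})$ with the tropicalization of the initial degeneration $\operatorname{in}_u(I(Y))$ pushed to $O(\tau)$, and then realize each boundary point as a limit by scaling along a ray into the relative interior of $\tau$. That reduction, the use of Tevelev/Kapranov-type results, and the closing limit argument with $p_\epsilon = \tfrac{1}{\epsilon}u + \tilde w$ are all correct, and you correctly flag the Gr\"obner-theoretic identification of $\overline{Y} \cap O(\tau)$ with a quotient of the initial degeneration as the load-bearing lemma.

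One spot needs tightening: you dispose of the inclusion $\overline{\trop_\mathbb{T}(Y)} \subseteq \trop_\mathbb{T}(\overline{Y})$ by asserting that $\trop_\mathbb{T}(\overline{Y})$ is closed ``by construction.'' It is not --- the extended tropicalization of a closed subvariety is defined as a disjoint union of strata-by-strata tropicalizations, and closedness of that union is precisely what the theorem is partially asserting, so as written the easy direction is either circular or silently invoking a separate result (e.g.\ Payne's compactness theorem). The cleaner route, consistent with the rest of your argument, is to observe that the Gr\"obner-theoretic identification you invoke for the hard direction is in fact an equality of strata: the $\tau$-stratum of $\trop_\mathbb{T}(\overline{Y})$ equals the set of limits along $\tau$ of points of $\trop_\mathbb{T}(Y)$, with the structure theorem (polyhedrality and balancing of $\trop_\mathbb{T}(Y)$) guaranteeing that every sequence in $\trop_\mathbb{T}(Y)$ escaping in the $\tau$-direction eventually lies in a single cone whose recession cone meets $\operatorname{relint}(\tau)$. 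With that both inclusions fall out at once and closedness is a corollary rather than an input.
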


\begin{theorem}\label{extendedmap}
If $G/H$ has trivial divisor class group, $Y \subseteq G/H$ is a subvariety, and $X$ is a $G/H$-embedding with the $A_2$-property, then 
\[
\overline{\psi}\left( \trop_{\mathbb{T}}\left(\overline{Y}\right) \right) = \trop_G\left( \overline{Y} \right),
\]
where the closure on the left is taken in $Z$ and the closure on the right is taken in $X$.
\end{theorem}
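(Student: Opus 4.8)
The strategy is to cut both sides along their orbit stratifications and, one colored cone at a time, to recover the statement from Theorem \ref{V=G} applied to a $G$-orbit of $X$, regarded as a homogeneous space. First I would note that the closure of $Y$ in $Z$ coincides with its closure in $X$, since $X = \overline{G/H}$ is closed in $Z$ by Proposition \ref{embedding}; thus both sides of the claimed identity live in $\trop_G(X)$. Now $\trop_\mathbb{T}(\overline{Y}) = \bigsqcup_O \trop_\mathbb{T}(\overline{Y}\cap O)$ over the torus orbits $O$ of $Z$ and $\trop_G(\overline{Y}) = \bigsqcup_\mathcal{O}\trop_G(\overline{Y}\cap\mathcal{O})$ over the $G$-orbits $\mathcal{O}$ of $X$. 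Every torus orbit of $Z$ is $O_{\sigma_\mathfrak{a}}$ for a unique colored cone $(\sigma,\mathcal{F})\in\Sigma$ and $\mathfrak{a}\in\mathfrak{A}(\mathcal{F})$; moreover $X\cap O_{\sigma_\mathfrak{a}}\subseteq\mathcal{O}_{(\sigma,\mathcal{F})}$ and $\mathcal{O}_{(\sigma,\mathcal{F})} = \bigsqcup_{\mathfrak{a}\in\mathfrak{A}(\mathcal{F})}(X\cap O_{\sigma_\mathfrak{a}})$. By construction $\overline{\psi}$ carries $\trop_\mathbb{T}(O_{\sigma_\mathfrak{a}})$ into the subset of $\Hom^{\mathcal{V}}(\sigma^{\vee}\cap\mathcal{M},\overline{\mathbb{Q}})$ that is the extended tropicalization $\trop_G(\mathcal{O}_{(\sigma,\mathcal{F})})$. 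Hence $\overline{\psi}$ respects the two stratifications, and it suffices to prove, for each $(\sigma,\mathcal{F})\in\Sigma$ with orbit $\mathcal{O}$,
\[
\overline{\psi}\Bigl(\textstyle\bigsqcup_{\mathfrak{a}\in\mathfrak{A}(\mathcal{F})}\trop_\mathbb{T}(\overline{Y}\cap O_{\sigma_\mathfrak{a}})\Bigr) = \trop_G(\overline{Y}\cap\mathcal{O}).
\]

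Since a $\mathbb{C}\{\!\{t\}\!\}$-point of $\overline{Y}\cap\mathcal{O}$ is a $\mathbb{C}\{\!\{t\}\!\}$-point of exactly one $\overline{Y}\cap O_{\sigma_\mathfrak{a}}$ with $\mathfrak{a}\in\mathfrak{A}(\mathcal{F})$ and conversely, the displayed identity — both inclusions at once — follows once we show that for such an arc $\gamma$ the attached $\mathbb{T}$-invariant valuation $\tilde{\nu}_\gamma$ and $G$-invariant valuation $\nu_\gamma$ satisfy $\overline{\psi}(\tilde{\nu}_\gamma) = \nu_\gamma$. (Equivalently, the per-cone claim is Theorem \ref{V=G} for the subvariety $\overline{Y}\cap\mathcal{O}$ of the homogeneous space $\mathcal{O}$, once $\bigcup_{\mathfrak{a}\in\mathfrak{A}(\mathcal{F})}O_{\sigma_\mathfrak{a}}$ is identified with the toric embedding of $\mathcal{O}$ from \textsection\ref{homspaces} and $\overline{\psi}$ with the corresponding projection.) To prove the pointwise statement I would run the extended analogue of the proof of Theorem \ref{V=G}. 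Both $\overline{\psi}(\tilde{\nu}_\gamma)$ and $\nu_\gamma$ are infinite on $(\sigma^{\vee}\setminus\sigma^{\perp})\cap\mathcal{M}$ — the regular functions vanishing on $\mathcal{O}$ — so it remains to compare them on $f = f_1^{a_1}\cdots f_r^{a_r}g_1^{b_1}\cdots g_m^{b_m}\in\sigma^{\perp}\cap\mathcal{M}$. For sufficiently general $g\in G$ one writes $gf_i = \sum_j c_{ij}f_{ij}$ with all $c_{ij}\neq 0$; since each $g_k$ is a $G$-eigenvector (Proposition 1.3 of \cite{KKV}), $gf = (\text{unit})\cdot\prod_i(\sum_j c_{ij}f_{ij})^{a_i}\cdot g_1^{b_1}\cdots g_m^{b_m}$, so $\nu_\gamma(f) = \nu(\gamma^{*}(gf))$ equals, for general $g$ (no cancellation of lowest-order terms), the minimum of $\tilde{\nu}_\gamma$ over the monomials occurring in $gf$; reducing this minimum to the pure monomials $\mu_\omega = f_{1\omega(1)}^{a_1}\cdots f_{r\omega(r)}^{a_r}g_1^{b_1}\cdots g_m^{b_m}$, $\omega\in\Omega$, by linearity of $\tilde{\nu}_\gamma$ on $\sigma_\mathfrak{a}^{\perp}\cap M$ together with a short convexity argument in the exponents, yields exactly $\overline{\psi}(\tilde{\nu}_\gamma)(f)$.

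I expect the main obstacle to be the bookkeeping behind Remark \ref{allofOmega}: a priori $\tilde{\nu}_\gamma$ is only defined on $\sigma_\mathfrak{a}^{\vee}\cap M$, while some $\mu_\omega$ need not lie there, so one must show that enlarging the minimum to all of $\Omega$ does not change its value — the point being that a monomial outside $\sigma_\mathfrak{a}^{\vee}$ has a pole along $O_{\sigma_\mathfrak{a}}$ and hence cannot occur among the monomials of $gf$, which is regular along the relevant orbit. The surrounding claims — that $\mathcal{O} = \bigsqcup_{\mathfrak{a}\in\mathfrak{A}(\mathcal{F})}(X\cap O_{\sigma_\mathfrak{a}})$ with each piece contained in a single $G$-orbit, that $\bigcup_{\mathfrak{a}\in\mathfrak{A}(\mathcal{F})}O_{\sigma_\mathfrak{a}}$ is the \textsection\ref{homspaces} toric embedding of $\mathcal{O}$ (this is where care about which colors survive on $\mathcal{O}$ enters), and that $\Hom^{\mathcal{V}}(\sigma^{\vee}\cap\mathcal{M},\overline{\mathbb{Q}})$ really contains $\trop_G(\mathcal{O})$ as the asserted piece — are refinements of the orbit correspondence behind Proposition \ref{embedding}, and sorting them out carefully is where most of the work lies. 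Throughout, the $A_2$-hypothesis is used precisely to guarantee the closed embedding $X\hookrightarrow Z$ and the well-definedness of $\Sigma_Z$ (Proposition \ref{fanexists}).
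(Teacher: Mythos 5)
Your reduction to the pointwise statement $\overline{\psi}(\tilde{\nu}_\gamma) = \nu_\gamma$ for a single arc $\gamma$ in a boundary orbit is the same reduction the paper makes (with less orbit-by-orbit scaffolding on the paper's side, since it is implicit in how both extended tropicalizations are defined). The divergence is in how that pointwise identity is established, and here the two arguments are genuinely different. The paper invokes Theorem \ref{closurecommutestoric} (the toric closure--tropicalization compatibility) to produce a sequence $\nu_\ell$ of \emph{honest} $\mathbb{T}$-invariant valuations coming from arcs $\gamma_\ell$ in $Y \subseteq G/H$ converging to $\tilde{\nu}_\gamma$, then does the elementary computation $\nu_\ell(gf) = \sum_i a_i\min_j \nu_\ell(f_{ij}) + \sum_k b_k\nu_\ell(g_k)$ for these finite-valued valuations (where negative exponents $a_i$, vanishing coefficients, etc.\ are unproblematic), and passes to the limit. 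This is also precisely what justifies Remark \ref{allofOmega}: the minimum is over all of $\Omega$ because every coefficient $c_{ij}$ is nonzero for the generic $g$, and the limit then validates this for $\tilde{\nu}_\gamma$. You instead try to compute with the extended valuation $\tilde{\nu}_\gamma$ directly, and the step you gloss over — ``reducing this minimum to the pure monomials $\mu_\omega$ by linearity\ldots together with a short convexity argument'' — is where the gaps live.

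Concretely, there are three issues with your direct route. First, the elements of $\sigma^\perp \cap \mathcal{M}$ have exponents $a_i \in \mathbb{Z}$, so for $a_i < 0$ the expression $gf$ contains $1/(\sum_j c_{ij} f_{ij})^{|a_i|}$, which is not a polynomial in the $f_{ij}$; the ``minimum over monomials occurring in $gf$'' does not literally make sense, and one has to argue with valuations of quotients. Second, the extended valuation $\tilde{\nu}_\gamma$ is only defined on $\sigma_\mathfrak{a}^\vee \cap M$, and when $\gamma$ is an arc entirely contained in the boundary orbit (not lifting to $G/H$), $\gamma^*(\mu_\omega)$ is simply undefined for $\mu_\omega \notin \sigma_\mathfrak{a}^\vee$ rather than a Puiseux series of negative valuation; your pole argument identifies which $\mu_\omega$ are problematic but does not by itself show the minimum is unaffected by discarding them, which is exactly the content of Remark \ref{allofOmega}. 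Third, the parenthetical claim that $\bigcup_{\mathfrak{a}\in\mathfrak{A}(\mathcal{F})} O_{\sigma_\mathfrak{a}}$ is the \S\ref{homspaces} toric embedding $Z_0(\mathcal{O})$ of the orbit $\mathcal{O}$, so that Theorem \ref{V=G} can be applied to $\mathcal{O}$, is not something the paper proves or uses; it would require verifying that $\mathcal{O}$ has trivial class group, identifying its colors and the ranks $s_i$ for the restricted $G$-modules, and matching coordinates, none of which is automatic. The limit argument via Theorem \ref{closurecommutestoric} avoids all three difficulties simultaneously and is also reused immediately afterward in the proof of Theorem \ref{closurecommutesspherical}, which is an additional reason the paper structures the argument this way.
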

\begin{proof}
Let $\gamma: \spec{\mathbb{C}\{\!\{t\}\!\}} \rightarrow \overline{Y}$ be a $\mathbb{C}\{\!\{t\}\!\}$-point of $\overline{Y}$ and suppose $\tilde{\nu}_\gamma$ and $\nu_\gamma$ are respectively the $\mathbb{T}$-invariant and $G$-invariant extended valuations induced by $\gamma$.
Then $\gamma$ lies in a $G$-orbit corresponding to a colored cone $(\sigma,\mathcal{F})$ and a $\mathbb{T}$-orbit corresponding to a cone $\sigma_\mathfrak{a}$.
We will show that $\overline{\psi}\left(\tilde{\nu}_\gamma\right) = \nu_\gamma$.

Let $f_1^{a_1}\cdots f_r^{a_r}g_1^{b_1} \cdots g_m^{b_m} \in \sigma^\perp \cap \mathcal{M}$.
Then a sufficiently general element of $G$ takes this $B$ semi-invariant rational function to a function of the form
\[
(c_{11}f_{11} + \cdots + c_{1s_1}f_{1s_1})^{a_1} \cdots (c_{r1}f_{r1} + \cdots + c_{rs_r}f_{rs_r})^{a_r}\cdot cg_1^{b_1} \cdots g_m^{b_m}
\]
for $c,c_{ij} \in \mathbb{C}^*$.
Generically, this is a non-zero rational function on the $G$-orbit corresponding to $(\sigma, \mathcal{F})$ that is defined at $\gamma$, so the valuation $\tilde{\nu}_\gamma$ takes it to a finite rational number.
By Theorem \ref{closurecommutestoric}, $\tilde{\nu}_\gamma$ lies in the closure of $\trop_\mathbb{T}(Y)$, so there exists a sequence $\set{\nu_\ell}_{\ell=1}^\infty$ of $\mathbb{T}$-invariant valuations associated to $\mathbb{C}\{\!\{t\}\!\}$-points $\gamma_\ell$ of $Y$ such that $\lim_{\ell \rightarrow \infty} \nu_\ell = \tilde{\nu}_\gamma$ in the topology on $\trop_\mathbb{T}(Z)$.
Then we have the following, which verifies the claim:
\begin{align*}
& \nu_\gamma\left( f_1^{a_1}\cdots f_r^{a_r}g_1^{b_1} \cdots g_m^{b_m} \right) \\
= & \; \tilde{\nu}_\gamma\left((c_{11}f_{11} + \cdots + c_{1s_1}f_{1s_1})^{a_1} \cdots (c_{r1}f_{r1} + \cdots + c_{rs_r}f_{rs_r})^{a_r}\cdot cg_1^{b_1} \cdots g_m^{b_m} \right) \\
= & \lim_{\ell \rightarrow \infty} \nu_\ell\left((c_{11}f_{11} + \cdots + c_{1s_1}f_{1s_1})^{a_1} \cdots (c_{r1}f_{r1} + \cdots + c_{rs_r}f_{rs_r})^{a_r}\cdot cg_1^{b_1} \cdots g_m^{b_m} \right) \\
= & \lim_{\ell \rightarrow \infty} \sum_{i=1}^r a_i\min_j\set{\nu_\ell(f_{ij})} + \sum_{k=1}^m b_k\nu_\ell(g_k) \\
= & \lim_{\ell \rightarrow \infty} \min\set{\nu_\ell\left( f_{1\omega(1)}^{a_1}\cdots f_{r\omega(r)}^{a_r}g_1^{b_1}\cdots g_m^{b_m} \right) : 
\omega \in \Omega} \\
= & \min\set{\tilde{\nu}_\gamma\left( f_{1\omega(1)}^{a_1}\cdots f_{r\omega(r)}^{a_r}g_1^{b_1}\cdots g_m^{b_m} \right) : 
\omega \in \Omega} = \overline{\psi}\left(\tilde{\nu}_\gamma \right)\left( f_1^{a_1}\cdots f_r^{a_r}g_1^{b_1} \cdots g_m^{b_m} \right)
\end{align*}
Pursuant to Remark \ref{allofOmega}, note that the minimum here is indexed over the entirety of the set $\Omega$ since every $c_{ij}$ is nonzero.
\end{proof}

Having established the result when the homogeneous space has trivial divisor class group, we turn to the case of non-trivial divisor class group.
As before, we write $\bm{G}/\bm{H}$ to denote a homogeneous space with non-trivial divisor class group and write $G/H$ for the homogeneous space with trivial divisor class group and dominant map $\bm{\pi}: G/H \rightarrow \bm{G}/\bm{H}$.
If $X$ is the $G/H$-embedding associated to a $\bm{G}/\bm{H}$-embedding $\bm{X}$, then the extended map $\bm{\pi}: X \rightarrow \bm{X}$ is equivariant with respect to the natural surjection $G \rightarrow \bm{G}$.
We can therefore extend Theorem \ref{V=G2} to spherical embeddings of homogeneous spaces with non-trivial divisor class group; the proof proceeds almost identically.
\begin{theorem}\label{nontrivial extended map}
Let $\bm{G}/\bm{H} \hookrightarrow \bm{X}$ be a spherical embedding with associated spherical embedding $G/H \hookrightarrow X$ where $G/H$ has trivial divisor class group.
If $\bm{Y} \subseteq \bm{G}/\bm{H}$ is a closed subvariety and $\bm{X}$ is a $\bm{G}/\bm{H}$-embedding, then
\[
\trop_{\bm{G}}\left( \overline{\bm{Y}} \right) = \left(\trop(\bm{\pi}) \circ \overline{\psi} \right)\left( \trop_{\bm{\mathbb{T}}}\left(\bm{\pi}^{-1}\left( \overline{\bm{Y}} \right) \right) \right),
\]
where the closures are taken in $\bm{X}$.
\end{theorem}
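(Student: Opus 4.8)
The plan is to imitate the deduction of Theorem \ref{V=G2} from Theorem \ref{V=G} and Proposition \ref{commute}, with Theorem \ref{extendedmap} now playing the role of Theorem \ref{V=G}. Set $Y := \bm{\pi}^{-1}(\bm{Y}) \subseteq G/H$ and write $\overline{Y}$ for its closure, taken in $Z$ on the toric side and in $X$ on the spherical side. Since $\bm{X}$ has the $A_2$-property, its colored fan is polyhedral (Theorem \ref{A2poly}), and the colored fan of the associated embedding $X$ is polyhedral by construction, so $X$ also has the $A_2$-property and Theorem \ref{extendedmap} yields $\overline{\psi}\left( \trop_{\mathbb{T}}\left(\overline{Y}\right)\right) = \trop_G\left(\overline{Y}\right)$. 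Applying $\trop(\bm{\pi})$ to both sides, the theorem reduces to two identifications: (i) $\bm{\pi}^{-1}\left(\overline{\bm{Y}}\right) = \overline{\bm{\pi}^{-1}(\bm{Y})}$ (closures in $\bm{Z}$ and $Z$, respectively), which makes the left-hand side of the theorem match the left-hand side of Theorem \ref{extendedmap}; and (ii) $\trop(\bm{\pi})\left(\trop_G\left(\overline{Y}\right)\right) = \trop_{\bm{G}}\left(\overline{\bm{Y}}\right)$.

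For (ii) I would invoke Proposition \ref{commute}. The extended morphism $\bm{\pi}: X \rightarrow \bm{X}$ is dominant (it restricts to the dominant map $G/H \rightarrow \bm{G}/\bm{H}$ on dense open subsets) and equivariant with respect to the natural surjection $\varphi: G = \bm{G} \times (\mathbb{C}^*)^{\bm{\mathcal{D}}} \rightarrow \bm{G}$. Moreover $\varphi^{-1}(\bm{H}) = \bm{H} \times (\mathbb{C}^*)^{\bm{\mathcal{D}}}$, so $G/\varphi^{-1}(\bm{H}) \cong \bm{G}/\bm{H}$ as varieties, and the hypotheses of Proposition \ref{commute} hold for the subvariety $\overline{Y} \subseteq X$. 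That proposition gives $\trop_{\bm{G}}\left(\bm{\pi}\left(\overline{Y}\right)\right) = \trop(\bm{\pi})\left(\trop_G\left(\overline{Y}\right)\right)$; since $\bm{\pi}$ is a surjective good geometric quotient, $\bm{\pi}\left(\bm{\pi}^{-1}\left(\overline{\bm{Y}}\right)\right) = \overline{\bm{Y}}$, so (ii) follows once (i) is established.

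For (i), the key point is that $\bm{\pi}: Z \rightarrow \bm{Z}$ is a good geometric quotient by the torus $(\mathbb{C}^*)^{\bm{\mathcal{D}}}$, hence flat, hence an open map; and for a continuous open map, taking preimage commutes with topological closure, giving $\bm{\pi}^{-1}\left(\overline{\bm{Y}}\right) = \overline{\bm{\pi}^{-1}(\bm{Y})}$. One also records the compatibility $\bm{\pi}^{-1}(\bm{X}) = X$ (since $X = \overline{G/H}$ is closed and $(\mathbb{C}^*)^{\bm{\mathcal{D}}}$-saturated in $Z$) and $\bm{\pi}^{-1}(\bm{G}/\bm{H}) = G/H$, so that closures taken in $\bm{Z}$ versus $\bm{X}$, and in $Z$ versus $X$, agree throughout. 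Assembling (i) and (ii) with the instance of Theorem \ref{extendedmap} above then produces the claimed identity.

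I expect the main obstacle to be part (i): verifying that forming the preimage under the quotient map $\bm{\pi}$ interchanges with closure, and carefully tracking whether each closure is being taken in the toric varieties $Z, \bm{Z}$ or in the spherical varieties $X, \bm{X}$, so that Theorem \ref{extendedmap} and Proposition \ref{commute} can be chained without a mismatch. Everything else is the same bookkeeping that already appeared in the proof of Theorem \ref{V=G2}, which is why the argument proceeds almost identically.
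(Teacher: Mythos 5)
Your proof follows the paper's own route exactly: apply Theorem~\ref{extendedmap} to $\bm{\pi}^{-1}\left(\overline{\bm{Y}}\right)$, then apply Proposition~\ref{commute} to push down through $\trop(\bm{\pi})$. The only difference is that you make explicit, and justify, the step $\bm{\pi}^{-1}\left(\overline{\bm{Y}}\right) = \overline{\bm{\pi}^{-1}(\bm{Y})}$ (and the $A_2$-property of $X$), which the paper's two-sentence proof leaves tacit; that is a reasonable and correct filling-in, not a different argument.
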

\begin{proof}
By Theorem \ref{extendedmap}, this is equivalent to showing 
\[
\trop_{\bm{G}}\left( \overline{\bm{Y}} \right) = \trop(\bm{\pi})\left( \trop_G\left( \bm{\pi}^{-1} \left( \overline{\bm{Y}} \right) \right) \right).
\]
Because $\bm{\pi}$ is equivariant with respect to the surjective group homomorphism $G \rightarrow \bm{G}$, the statement follows from Proposition \ref{commute}.
\end{proof}

Developing the interplay between spherical tropicalization and toric tropicalization has the potential to give insight into the structure of spherical tropicalizations by translating known results from the toric world.
As an example, we can generalize Theorem \ref{closurecommutestoric} to spherical varieties:

\begin{theorem}\label{closurecommutesspherical}
If $Y \subseteq G/H$ is a closed subvariety and $X$ is a $G/H$-embedding, then 
\[
\trop_G\left(\overline{Y} \right) = \overline{\trop_G(Y)},
\]
where the closure on the left is taken in $X$ and the closure on the right is taken in $\trop_G(X)$.
\end{theorem}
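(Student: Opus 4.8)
The plan is to transport Theorem~\ref{closurecommutestoric} from the toric world to the spherical one along the projection $\overline{\psi}$. Assume first that $X$ has the $A_2$-property and $G/H$ has trivial divisor class group. Then Theorem~\ref{extendedmap} gives $\trop_G(\overline{Y})=\overline{\psi}\bigl(\trop_\mathbb{T}(\overline{Y})\bigr)$, Theorem~\ref{closurecommutestoric} gives $\trop_\mathbb{T}(\overline{Y})=\overline{\trop_\mathbb{T}(Y)}$, and, since $\overline{\psi}$ restricts to $\psi$ on $\trop_\mathbb{T}(Z_0)\supseteq\trop_\mathbb{T}(Y)$, Theorem~\ref{V=G} gives $\overline{\trop_G(Y)}=\overline{\overline{\psi}\bigl(\trop_\mathbb{T}(Y)\bigr)}$. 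Writing $A:=\trop_\mathbb{T}(Y)$, the theorem in this case is therefore equivalent to the topological identity
\[
\overline{\psi}\bigl(\overline{A}\bigr)=\overline{\overline{\psi}(A)}.
\]
Here the left closure is taken in $\trop_\mathbb{T}(Z)$ and the right one in $\trop_G(X)$.

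Before proving it I would reduce the general theorem to this case. To drop the $A_2$ hypothesis, cover $X$ by its $G$-stable open simple subvarieties $X_i$; each corresponds to a single colored cone and so is $A_2$ by Theorem~\ref{A2poly}. As $X_i$ is open in $X$, $\overline{Y}\cap X_i$ is the closure of $Y\cap X_i$ in $X_i$, the $\trop_G(X_i)$ form an open cover of $\trop_G(X)$, and one checks $\trop_G(\overline{Y})\cap\trop_G(X_i)=\trop_G(\overline{Y\cap X_i})$ and $\overline{\trop_G(Y)}\cap\trop_G(X_i)=\overline{\trop_G(Y\cap X_i)}$ (closure in $\trop_G(X_i)$); since equality of closed sets can be checked on an open cover, it suffices to treat each $X_i$. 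To drop the hypothesis that $G/H$ has trivial divisor class group, note that $\bm{\pi}\colon X\to\bm{X}$ is dominant, equivariant with respect to the surjection $G\to\bm{G}$, and open (being a torus quotient), so $\bm{\pi}^{-1}(\overline{\bm{Y}})=\overline{\bm{\pi}^{-1}(\bm{Y})}$; the general case then follows from Proposition~\ref{commute} and the trivial-class-group case applied to $\bm{\pi}^{-1}(\bm{Y})$, once one knows that $\trop(\bm{\pi})$ commutes with closure, which follows from its description on each orbit stratum as a linear surjection (a linear image of the polyhedral complex occurring as a spherical tropicalization is closed).

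It remains to prove $\overline{\psi}(\overline{A})=\overline{\overline{\psi}(A)}$. The inclusion $\subseteq$ is immediate once $\overline{\psi}\colon\trop_\mathbb{T}(Z)\to\trop_G(X)$ is known to be continuous, which I would check stratum by stratum exactly as in Proposition~\ref{trop map continuous}, using that on the stratum attached to a cone $\sigma_{\mathfrak{a}}$ the map $\overline{\psi}$ is a minimum of finitely many coordinate functionals and that boundary strata of $\trop_G(X)$ arise as limits of interior valuations. For $\supseteq$ it suffices to show that $\overline{\psi}(\overline{A})=\trop_G(\overline{Y})$ is closed; since $\overline{A}=\trop_\mathbb{T}(\overline{Y})$ is already closed in $\trop_\mathbb{T}(Z)$, this follows if $\overline{\psi}$ is a closed map, which I would deduce from properness. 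Over a point $\nu$ of the valuation cone of the orbit indexed by $(\sigma,\mathcal{F})$, and within the stratum of $\trop_\mathbb{T}(Z)$ attached to any $\sigma_{\mathfrak{a}}$ lying over that orbit, the fiber of $\overline{\psi}$ is cut out by the conditions $\min_j \mu(f_{ij})=\nu(f_i)\in\mathbb{Q}$; it is thus a closed subset of a finite product of the compact intervals $[\nu(f_i),\infty]\subseteq\overline{\mathbb{Q}}$ and hence compact. The same estimate over a compact subset of the valuation cone and over the finitely many $\sigma_{\mathfrak{a}}$ above $(\sigma,\mathcal{F})$ yields properness, and a proper continuous map to a locally compact Hausdorff space is closed.

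The main obstacle is this last step: one must make the properness, equivalently the closed-map property, of $\overline{\psi}$ precise \emph{simultaneously} across all orbit strata and keep track of how the limit-point topologies on $\trop_\mathbb{T}(Z)$ and on $\trop_G(X)$ correspond under $\overline{\psi}$, so that the compact-fiber computations on individual strata genuinely assemble into a closed-map statement for the glued spaces. One should also fix conventions at the outset --- working with $\overline{\mathbb{R}}$-valued points, or with $\mathbb{R}$- rather than $\mathbb{Q}$-coefficients --- so that the intervals $[\nu(f_i),\infty]$ really are compact and the local compactness invoked at the end of the argument is available.
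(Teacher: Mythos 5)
Your proposal takes a genuinely different route from the paper. The paper proves both inclusions by working explicitly with sequences of $\mathbb{C}\{\!\{t\}\!\}$-points: for $\trop_G(\overline{Y})\subseteq\overline{\trop_G(Y)}$ it applies Theorem~\ref{closurecommutestoric} to produce a sequence $\tilde{\nu}_\ell\to\tilde{\nu}_\gamma$ in $\trop_\mathbb{T}(Z)$ and then computes directly that $\overline{\psi}(\tilde{\nu}_\ell)\to\nu_\gamma$; for the converse it lifts a convergent sequence $\mu_\ell\to\mu$ in $\trop_G(X)$ to $\mathbb{T}$-invariant valuations $\tilde{\mu}_\ell$, passes to a convergent subsequence, and invokes Theorem~\ref{closurecommutestoric} once more. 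Your reformulation as the topological identity $\overline{\psi}(\overline{A})=\overline{\overline{\psi}(A)}$ is cleaner conceptually: $\subseteq$ from continuity, $\supseteq$ from $\overline{\psi}$ being a closed map. That packaging is genuinely not how the paper argues, and it would give a reusable statement about $\overline{\psi}$ rather than a one-off sequence computation. Your reductions (simple $A_2$-pieces via an open cover; nontrivial class group via $\bm{\pi}^{-1}(\overline{\bm{Y}})=\overline{\bm{\pi}^{-1}(\bm{Y})}$) mirror the paper's but are organized differently --- the paper handles the class-group case by a string of inclusions that it then shows to be equalities, again via subsequence extraction.

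The gap you flag at the end is real and is precisely where your route diverges in cost from the paper's. Establishing that $\overline{\psi}$ is closed via properness needs $[\nu(f_i),\infty]$ to be compact and the target to be locally compact Hausdorff, neither of which holds with the paper's $\overline{\mathbb{Q}}$-valued conventions ($\mathbb{Q}$ is not locally compact, and the interval is not compact in $\overline{\mathbb{Q}}$). Switching to $\mathbb{R}$-coefficients or $\overline{\mathbb{R}}$-valued points is not a cosmetic adjustment: Theorem~\ref{extendedmap}, the topology on $\trop_G(X)$ as defined via limits, and the identification with $\Hom^\mathcal{V}(\sigma^\vee\cap\mathcal{M},\overline{\mathbb{Q}})$ would all need to be rechecked with the new coefficients. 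Similarly, your claim that $\trop(\bm{\pi})$ commutes with closure requires its own closed-map argument; the stratum-wise linearity of $\trop(\bm{\pi})$ alone does not give closedness of the glued map, and ``a linear image of a polyhedral complex is closed'' fails in general without a properness or finiteness hypothesis. The paper sidesteps these abstract issues by never asserting a closed-map property, only ever extracting subsequences from sequences that arise from geometry --- though, to be fair, the ``pass to a convergent subsequence'' step in the paper's converse direction quietly leans on the same kind of compactness you are trying to make explicit. So the approach is viable, but the properness step is the genuinely hard part and is not yet a proof; carrying it out would require committing to $\mathbb{R}$-conventions and verifying closedness across strata with care.
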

\begin{proof}
We prove the theorem when $X$ is a simple $G/H$-embedding because $X$ always has the $A_2$-property in this setting. If $X$ is not simple, we may break it up into simple $G/H$-embeddings, where the result holds, and then glue together along shared orbits.
Note that this will work even if $X$ does not have the $A_2$-property.

We suppose first that $G/H$ has trivial divisor class group.
Let $\nu_\gamma \in \trop_G\left(\overline{Y} \right) \setminus \trop_G(Y)$ be an extended $G$-invariant valuation corresponding to a $\mathbb{C}\{\!\{t\}\!\}$-point $\gamma$.
Then $\gamma$ induces a $\mathbb{T}$-invariant valuation $\tilde{\nu}_\gamma$ in $\trop_\mathbb{T}\left(\overline{Y} \right)$.
By Theorem \ref{closurecommutestoric}, there exists a sequence of $\mathbb{C}\{\!\{t\}\!\}$-points $\gamma_\ell \in Y\left(\mathbb{C}\{\!\{t\}\!\} \right)$ with associated $\mathbb{T}$-invariant valuations $\tilde{\nu}_\ell$ such that $\lim_{\ell \rightarrow \infty} \tilde{\nu}_\ell = \tilde{\nu}_\gamma$ in the topology on $\trop_\mathbb{T}(Z)$.
We claim that $\lim_{\ell \rightarrow \infty} \overline{\psi}\left(\tilde{\nu}_\ell \right) = \nu_\gamma$.
Suppose $\gamma$ lies in a $G$-orbit corresponding to a colored cone $(\sigma, \mathcal{F})$ and a $\mathbb{T}$-orbit corresponding to $\sigma_\mathfrak{a}$ and let $f_1^{a_1}\cdots f_r^{a_r}g_1^{b_1} \cdots g_m^{b_m} \in \sigma^\perp \cap \mathcal{M}$ be arbitrary.
Then we have the following, recalling from Remark \ref{allofOmega} that the minimums are indexed over all of $\Omega$:
\begin{align*}
\lim_{\ell \rightarrow \infty} \overline{\psi}(\nu_\ell)\left(f_1^{a_1}\cdots f_r^{a_r}g_1^{b_1} \cdots g_m^{b_m} \right) & = \lim_{\ell \rightarrow \infty} \min\set{\tilde{\nu}_\ell\left( f_{1\omega(1)}^{a_1}\cdots f_{r\omega(r)}^{a_r}g_1^{b_1}\cdots g_m^{b_m} \right) : 
\omega \in \Omega} \\
& = \min\set{\tilde{\nu}_\gamma\left( f_{1\omega(1)}^{a_1}\cdots f_{r\omega(r)}^{a_r}g_1^{b_1}\cdots g_m^{b_m} \right) : 
\omega \in \Omega} \\
& = \overline{\psi}\left( \tilde{\nu}_\gamma \right)\left( f_1^{a_1}\cdots f_r^{a_r}g_1^{b_1} \cdots g_m^{b_m} \right) \\
& = \nu_\gamma \left( f_1^{a_1}\cdots f_r^{a_r}g_1^{b_1} \cdots g_m^{b_m} \right).
\end{align*}
The last equality here follows from the proof of Theorem \ref{extendedmap}.
It follows that $\nu_\gamma$ is the limit of the sequence $\set{\overline{\psi}\left(\tilde{\nu}_\ell \right)}_{\ell = 1}^\infty$.
Hence, $\nu_\gamma \in \overline{\trop_G(Y)}$ and the first inclusion follows.

Conversely, suppose that $\mu \in \overline{\trop_G(Y)}$ and let $\set{\gamma_\ell}_{\ell = 1}^\infty$ be a sequence of $\mathbb{C}\{\!\{t\}\!\}$-points of $Y$ such that the associated $G$-invariant valuations $\mu_\ell := \nu_{\gamma_\ell}$ satisfy $\lim_{\ell \rightarrow \infty} \mu_\ell = \mu$ in the topology on $\trop_G(X)$.
Each $\gamma_\ell$ induces a $\mathbb{T}$-invariant valuation $\tilde{\mu}_\ell$.
By possibly replacing $\set{\mu_\ell}_{\ell = 1}^\infty$ with a subsequence, there is $\tilde{\mu} \in \overline{\trop_\mathbb{T}\left( Y \right)}$ with $\tilde{\mu} = \lim_{\ell \rightarrow \infty} \tilde{\mu}_\ell$.
By Theorem \ref{closurecommutestoric}, $\tilde{\mu}$ is induced by a $\mathbb{C}\{\!\{t\}\!\}$-point $\gamma$ of $\overline{Y}$.
We claim that $\mu = \overline{\psi}\left( \tilde{\mu} \right) \in \trop_G\left( \overline{Y} \right)$.
Let $f_1^{a_1} \cdots f_r^{a_r}g_1^{b_1} \cdots g_m^{b_m} \in \sigma^\perp \cap \mathcal{M}$ be arbitrary, where $\sigma$ corresponds to the $G$-orbit in whose tropicalization $\mu$ lies.
Then the equality
\[
\mu\left( f_1^{a_1} \cdots f_r^{a_r}g_1^{b_1} \cdots g_m^{b_m} \right) = \overline{\psi}\left( \tilde{\mu} \right)\left( f_1^{a_1} \cdots f_r^{a_r}g_1^{b_1} \cdots g_m^{b_m} \right)
\]
follows from an argument similar to the previous inclusion.
Hence, $\mu$ is induced by $\gamma$ and so $\mu \in \trop_G\left( \overline{Y} \right)$.

Now suppose $\bm{G}/\bm{H}$ has nontrivial divisor class group and let $G/H$ be the associated spherical homogeneous space with trivial divisor class group.
Then we have the following string of inclusions and equalities:
\begin{align*}
\overline{\trop_{\bm{G}}\left( \bm{Y} \right)} & = \overline{\trop(\bm{\pi})\left( \trop_G\left( \bm{\pi}^{-1} \left(\bm{Y} \right) \right) \right)} \\
& \supseteq 
\trop(\bm{\pi})\left( \overline{\trop_G\left( \bm{\pi}^{-1} \left(\bm{Y} \right) \right)} \right) \\
& = \trop(\bm{\pi})\left( \trop_G\left( \overline{\bm{\pi}^{-1} \left(\bm{Y} \right)} \right) \right) \\
& \subseteq \trop(\bm{\pi})\left( \trop_G\left( \bm{\pi}^{-1} \left(\overline{\bm{Y}} \right) \right) \right) \\
& = \trop_{\bm{G}}\left( \overline{\bm{Y}} \right)
\end{align*}
From top to bottom, these containments and equalities respectively follow from Theorem \ref{V=G2}, Proposition \ref{trop map continuous}, the first half of this proof, the continuity of $\bm{\pi}$, and Theorem \ref{nontrivial extended map}.
Completing the proof now comes down to showing that the two containments are equalities.

For the first containment, suppose that $\mu \in \overline{\trop(\bm{\pi})(W)}$, where $W := \trop_G\left( \bm{\pi}^{-1} \left(\bm{Y} \right) \right)$.
Then there exists a sequence $\set{\mu_\ell}_{\ell = 1}^\infty$ of valuations in $\trop(\bm{\pi})\left(W \right)$ such that $\lim_{\ell \rightarrow \infty} \mu_\ell = \mu$.
For each $\ell$, choose an element $\bm{\mu}_\ell \in\trop(\bm{\pi})^{-1}(\mu_\ell)$.
The set $\set{\bm{\mu}_\ell}_{\ell = 1}^\infty$ contains a convergent subsequence in $W$.
This subsequence converges to an extended valuation $\tilde{\mu} \in \overline{W}$, which necessarily maps to $\mu$ under $\trop(\bm{\pi})$.
Thus, the first inclusion is equality.  

Now we consider the second inclusion.
We will show that $\trop_\mathbb{T}\left( \overline{\bm{\pi}^{-1} \left(\bm{Y} \right)} \right) \supseteq \trop_\mathbb{T}\left( \bm{\pi}^{-1} \left(\overline{\bm{Y}} \right) \right)$ since applying the map $\trop(\bm{\pi}) \circ \overline{\psi}$ will deliver the needed inclusion by Theorem \ref{nontrivial extended map}.
Theorem \ref{closurecommutestoric} says that $\overline{\trop_\mathbb{T}\left( \bm{\pi}^{-1} \left(\bm{Y} \right) \right)} = \trop_\mathbb{T}\left( \overline{\bm{\pi}^{-1} \left(\bm{Y} \right)} \right)$, and the proof in \cite{MS} only uses the fact that $\overline{\bm{\pi}^{-1} \left(\bm{Y} \right)}$ is a closed set in $Z$ whose intersection with $\mathbb{T}$ is the variety $\bm{\pi}^{-1} \left(\bm{Y} \right) \cap \mathbb{T}$.
Because $\bm{\pi}^{-1}\left( \overline{\bm{Y}} \right)$ satisfies these properties, we can conclude that $\overline{\trop_\mathbb{T}\left( \bm{\pi}^{-1} \left(\bm{Y} \right) \right)} = \trop_\mathbb{T}\left(\bm{\pi}^{-1} \left(\overline{\bm{Y}} \right) \right)$ and hence the equality $\trop_\mathbb{T}\left( \overline{\bm{\pi}^{-1} \left(\bm{Y} \right) }\right) = \trop_\mathbb{T}\left(\bm{\pi}^{-1} \left(\overline{\bm{Y}} \right) \right)$.
This completes the second inclusion and the proof.
\end{proof}

%\clearpage
\begin{bibdiv}
\begin{biblist}

\bib{AH}{article}{
   author={A'Campo-Neuen, Annette},
   author={Hausen, J\"urgen},
   title={Quotients of toric varieties by the action of a subtorus},
   journal={Tohoku Math. J. (2)},
   volume={51},
   date={1999},
   number={1},
   pages={1--12},
   issn={0040-8735},
   %review={\MR{1671722}},
   %doi={10.2748/tmj/1178224848},
}

\bib{ADHL}{book}{
   author={Arzhantsev, Ivan},
   author={Derenthal, Ulrich},
   author={Hausen, J\"urgen},
   author={Laface, Antonio},
   title={Cox rings},
   series={Cambridge Studies in Advanced Mathematics},
   volume={144},
   publisher={Cambridge University Press, Cambridge},
   date={2015},
   pages={viii+530},
   isbn={978-1-107-02462-5},
   %review={\MR{3307753}},
}

\bib{Br91}{article}{
   author={Brion, Michel},
   title={Sur la g\'eom\'etrie des vari\'et\'es sph\'eriques},
   language={French},
   journal={Comment. Math. Helv.},
   volume={66},
   date={1991},
   number={2},
   pages={237--262},
   issn={0010-2571},
   %review={\MR{1107840}},
   %doi={10.1007/BF02566646},
}

\bib{Br07}{article}{
   author={Brion, Michel},
   title={The total coordinate ring of a wonderful variety},
   journal={J. Algebra},
   volume={313},
   date={2007},
   number={1},
   pages={61--99},
   issn={0021-8693},
   %review={\MR{2326138}},
   %doi={10.1016/j.jalgebra.2006.12.022},
}

\bib{Co1}{article}{
   author={Cox, David A.},
   title={The homogeneous coordinate ring of a toric variety},
   journal={J. Algebraic Geom.},
   volume={4},
   date={1995},
   number={1},
   pages={17--50},
   issn={1056-3911},
   %review={\MR{1299003}},
}

\bib{Co2}{article}{
   author={Cox, David A.},
   title={Erratum to ``The homogeneous coordinate ring of a toric variety''
   [MR1299003]},
   journal={J. Algebraic Geom.},
   volume={23},
   date={2014},
   number={2},
   pages={393--398},
   issn={1056-3911},
   %review={\MR{3166395}},
   %doi={10.1090/S1056-3911-2013-00651-7},
}

\bib{CLS}{book}{
   author={Cox, David A.},
   author={Little, John B.},
   author={Schenck, Henry K.},
   title={Toric varieties},
   series={Graduate Studies in Mathematics},
   volume={124},
   publisher={American Mathematical Society, Providence, RI},
   date={2011},
   pages={xxiv+841},
   isbn={978-0-8218-4819-7},
   %review={\MR{2810322}},
   %doi={10.1090/gsm/124},
}

\bib{Ga}{article}{
   author={Gagliardi, Giuliano},
   title={The Cox ring of a spherical embedding},
   journal={J. Algebra},
   volume={397},
   date={2014},
   pages={548--569},
   %issn={0021-8693},
   %review={\MR{3119238}},
}

\bib{Ga2}{article}{
   author = {Gagliardi, Giuliano},
    title = {Spherical varieties with the $A_k$-property},
journal = {ArXiv e-prints},
archivePrefix = {"arXiv"},
   eprint = {arXiv:1303.5611},
 primaryClass = {"math.AG"},
 keywords = {Mathematics - Algebraic Geometry},
     year = {2017},
    month = {apr},
   adsurl = {https://arxiv.org/abs/1303.5611},
  adsnote = {Provided by the SAO/NASA Astrophysics Data System}
}

%\bib{GH}{article}{
%   author={Gagliardi, Giuliano},
%   author={Hofscheier, Johannes},
%   title={Homogeneous spherical data of orbits in spherical embeddings},
%   journal={Transform. Groups},
%   volume={20},
%   date={2015},
%   number={1},
%   pages={83--98},
%   issn={1083-4362},
%   %review={\MR{3317796}},
%   %doi={10.1007/s00031-014-9297-2},
%}

\bib{Ha}{book}{
   author={Hartshorne, Robin},
   title={Algebraic geometry},
   note={Graduate Texts in Mathematics, No. 52},
   publisher={Springer-Verlag, New York-Heidelberg},
   date={1977},
   pages={xvi+496},
   %isbn={0-387-90244-9},
   %review={\MR{0463157}},
}

\bib{HK}{article}{
   author={Hu, Yi},
   author={Keel, Sean},
   title={Mori dream spaces and GIT},
   note={Dedicated to William Fulton on the occasion of his 60th birthday},
   journal={Michigan Math. J.},
   volume={48},
   date={2000},
   pages={331--348},
   issn={0026-2285},
   %review={\MR{1786494}},
   %doi={10.1307/mmj/1030132722},
}

\bib{Ka}{article}{
   author={Kajiwara, Takeshi},
   title={Tropical toric geometry},
   conference={
      title={Toric topology},
   },
   book={
      series={Contemp. Math.},
      volume={460},
      publisher={Amer. Math. Soc., Providence, RI},
   },
   date={2008},
   pages={197--207},
   %review={\MR{2428356}},
   %doi={10.1090/conm/460/09018},
}

\bib{KM}{article}{
   author = {Kaveh, Kiumars},
   author = {Manon, Christopher},
    title = {Gr\"obner theory and tropical geometry on spherical varieties},
journal = {ArXiv e-prints},
archivePrefix = {"arXiv"},
   eprint = {arXiv:1611.01841},
 primaryClass = {"math.AG"},
 keywords = {Mathematics - Algebraic Geometry},
     year = {2016},
    month = {nov},
   adsurl = {https://arxiv.org/abs/1611.01841},
  adsnote = {Provided by the SAO/NASA Astrophysics Data System}
}

\bib{Kn}{article}{
   author={Knop, Friedrich},
   title={The Luna-Vust theory of spherical embeddings},
   conference={
      title={Proceedings of the Hyderabad Conference on Algebraic Groups
      (Hyderabad, 1989)},
   },
   book={
      publisher={Manoj Prakashan, Madras},
   },
   date={1991},
   pages={225--249},
   %review={\MR{1131314}},
}

\bib{KKV}{article}{
   author={Knop, Friedrich},
   author={Kraft, Hanspeter},
   author={Vust, Thierry},
   title={The Picard group of a $G$-variety},
   conference={
      title={Algebraische Transformationsgruppen und Invariantentheorie},
   },
   book={
      series={DMV Sem.},
      volume={13},
      publisher={Birkh\"auser, Basel},
   },
   date={1989},
   pages={77--87},
   %review={\MR{1044586}},
}

\bib{LV}{article}{
   author={Luna, D.},
   author={Vust, Th.},
   title={Plongements d'espaces homog\`enes},
   language={French},
   journal={Comment. Math. Helv.},
   volume={58},
   date={1983},
   number={2},
   pages={186--245},
   issn={0010-2571},
   %review={\MR{705534}},
   %doi={10.1007/BF02564633},
}

\bib{MS}{book}{
   author={Maclagan, Diane},
   author={Sturmfels, Bernd},
   title={Introduction to tropical geometry},
   series={Graduate Studies in Mathematics},
   volume={161},
   publisher={American Mathematical Society, Providence, RI},
   date={2015},
   pages={xii+363},
   isbn={978-0-8218-5198-2},
   %review={\MR{3287221}},
}

\bib{Mu}{book}{
   author={Mumford, David},
   title={The red book of varieties and schemes},
   series={Lecture Notes in Mathematics},
   volume={1358},
   edition={Second, expanded edition},
   note={Includes the Michigan lectures (1974) on curves and their
   Jacobians;
   With contributions by Enrico Arbarello},
   publisher={Springer-Verlag, Berlin},
   date={1999},
   pages={x+306},
   isbn={3-540-63293-X},
   %review={\MR{1748380}},
}

\bib{Na}{article}{
   author = {Nash, Evan D.},
    title = {Tropicalizing Spherical Embeddings},
journal = {ArXiv e-prints},
archivePrefix = {"arXiv"},
   eprint = {arXiv:1609.07455},
 primaryClass = {"math.AG"},
 keywords = {Mathematics - Algebraic Geometry},
     year = {2016},
    month = {sep},
   adsurl = {https://arxiv.org/abs/1609.07455},
  adsnote = {Provided by the SAO/NASA Astrophysics Data System}
}

\bib{Pas}{article}{
   author={Pasquier, Boris},
   title={Introduction to spherical varieties and description of special classes of spherical varieties},
   journal={lecture notes available at http://www.math.univ-montp2.fr/~pasquier/KIAS.pdf},
   volume={},
   date={2009},
   number={},
   pages={},
   issn={},
   %review={\MR{2511632}},
   %doi={10.4310/MRL.2009.v16.n3.a13},
}

\bib{Pay}{article}{
   author={Payne, Sam},
   title={Analytification is the limit of all tropicalizations},
   journal={Math. Res. Lett.},
   volume={16},
   date={2009},
   number={3},
   pages={543--556},
   issn={1073-2780},
   %review={\MR{2511632}},
   %doi={10.4310/MRL.2009.v16.n3.a13},
}

\bib{Pe}{article}{
   author={Perrin, Nicolas},
   title={On the geometry of spherical varieties},
   journal={Transform. Groups},
   volume={19},
   date={2014},
   number={1},
   pages={171--223},
   issn={1083-4362},
   %review={\MR{3177371}},
   %doi={10.1007/s00031-014-9254-0},
}

\bib{Sw}{article}{
   author={\'Swi\polhk ecicka, Joanna},
   title={Quotients of toric varieties by actions of subtori},
   journal={Colloq. Math.},
   volume={82},
   date={1999},
   number={1},
   pages={105--116},
   %issn={0010-1354},
   %review={\MR{1736038}},
}

\bib{Te}{article}{
   author={Tevelev, Jenia},
   title={Compactifications of subvarieties of tori},
   journal={Amer. J. Math.},
   volume={129},
   date={2007},
   number={4},
   pages={1087--1104},
   issn={0002-9327},
   %review={\MR{2343384}},
   %doi={10.1353/ajm.2007.0029},
}

\bib{Vo}{thesis}{
   author = {Vogiannou, Tassos},
    title = {Spherical Tropicalization},
    institution = {University of Massachusetts Amherst},
    type = {thesis},
journal = {ArXiv e-prints},
archivePrefix = {"arXiv"},
   eprint = {arXiv:1511.02203},
 primaryClass = {"math.AG"},
 keywords = {Mathematics - Algebraic Geometry},
     year = {2015},
    month = {nov},
   adsurl = {http://adsabs.harvard.edu/abs/2015arXiv151102203V},
  adsnote = {Provided by the SAO/NASA Astrophysics Data System}
}

\bib{Wl}{article}{
   author={W\l odarczyk, Jaros\l aw},
   title={Embeddings in toric varieties and prevarieties},
   journal={J. Algebraic Geom.},
   volume={2},
   date={1993},
   number={4},
   pages={705--726},
   %issn={1056-3911},
   %review={\MR{1227474}},
}

\end{biblist}
\end{bibdiv}

\end{document}